\documentclass[11pt]{amsart}
\usepackage{amsmath,amssymb,amsthm}
\usepackage[margin=1in]{geometry}

\newtheorem{theorem}{Theorem}[section]
\newtheorem{proposition}[theorem]{Proposition}
\newtheorem{lemma}[theorem]{Lemma}
\newtheorem{corollary}[theorem]{Corollary}
\theoremstyle{definition}
\newtheorem{definition}[theorem]{Definition}
\theoremstyle{remark}
\newtheorem{remark}[theorem]{Remark}
\newtheorem{example}[theorem]{Example}
\theoremstyle{plain}
\newtheorem{introtheorem}{Theorem}

\newtheorem{introtheoremprime}{Theorem}

\newtheorem{introtheoremsecond}{Theorem}

\newtheorem*{introproposition}{Proposition}
\newtheorem*{introtheoremnamed}{Theorem}

\DeclareMathOperator{\sign}{sign}
\DeclareMathOperator{\cone}{cone}
\DeclareMathOperator{\conv}{conv}
\newcommand{\R}{\mathbb{R}}
\newcommand{\A}{\mathcal A}

\title{Topological degree of fewnomial maps}
\author{Fr\'ed\'eric Bihan}
\address{Laboratoire de Math\'ematiques, Universit\'e Savoie Mont Blanc, 73376 Le Bourget-du-Lac Cedex, France}
\email{frederic.bihan@univ-smb.fr}
\date{\today}

\begin{document}

\begin{abstract}
We study the topological degree on the positive orthant of a map $F\colon\R^n_{>0}\to\R^n$ whose components are real polynomials, possibly with real exponents, with the same support $\A$ of $N=n+k+1$ monomials. This degree is well defined under a half-space condition on the columns of the coefficient matrix indexed by the faces of the Newton polytope, which holds for generic exponents and coefficients. We give three formulas for it, all in terms of signs of maximal minors. The first one is a Viro formula: the degree is a signed count of the simplices of a regular triangulation of $\A$ which are positively decorated in the sense of Bihan, Santos and Spaenlehauer. The second one is a sum over the facets of the Newton polytope, and the third one a sum over the vertices of the polyhedron of the Gale dual system. As a consequence, the degree only depends on the oriented matroids of the exponent and coefficient matrices restricted to the vertices of the Newton polytope. We also obtain a Descartes rule of signs for the degree and, in the generic case, the upper bound $|\deg F|<\frac12\min(M(n,N),M(k,N))$, where $M(d,N)$ is the largest number of facets of a $d$-polytope with $N$ vertices. This bound is sharp when $n=2$ or $k=2$. Using the H-duality of Bihan, Dickenstein and Forsg\aa rd, we show that the set of possible degrees is symmetric in $n$ and $k$. Since a nonzero degree forces a positive zero, our formulas give sign conditions for the existence of positive solutions, as well as parity statements for their number.
\end{abstract}

\maketitle

\section*{Introduction}

Let $\A=\{a_1,\dots,a_N\}\subset\R^n$ be a finite set which affinely spans $\R^n$, write $N=n+k+1$, and let $C=(c_{li})$ be a real $n\times N$ matrix with columns $C_1,\dots,C_N$. We consider the \emph{fewnomial map}
\[
F=(f_1,\dots,f_n)\colon\R^n_{>0}\to\R^n,\qquad f_l(x)=\sum_{i=1}^Nc_{li}\,x^{a_i},\qquad\text{that is,}\qquad F(x)=\sum_{i=1}^NC_i\,x^{a_i}.
\]
All its components have the same support $\A$, and we allow real exponents. In this paper we study the \emph{topological degree} $\deg F$ of $F$ on the positive orthant. When the positive zeros of $F$ are finitely many and nondegenerate, it is given by
\[
\deg F=\sum_{x\in\R^n_{>0},\ F(x)=0}\sign\det DF(x).
\]
In general $\deg F$ is the Brouwer degree, which is defined as soon as the positive zeros of $F$ form a compact set (Definition~\ref{def:deg}).

Our starting point is the moment map, which identifies $\R^n_{>0}$ with the interior of the Newton polytope $P=\conv(\A)$. Through this identification, $F$ extends to a map $\hat F$ on $P$. If no truncation of $F$ to a proper face of $P$ has a positive zero, then $\deg F$ is defined, and it is the degree of $\hat F/|\hat F|$ on $\partial P$ (Proposition~\ref{prop:degree}). In that case $F$ has a positive zero as soon as $\deg F\neq0$. Moreover, the number of nondegenerate positive zeros is at least $|\deg F|$, and it has the parity of $\deg F$. This parity can be read on each of our formulas, as the parity of their number of terms. It gives existence criteria (Corollary~\ref{cor:odd}), and it also refines upper bounds. For instance, the system of El Hilany \cite{EH}, with two equations, five monomials and seven positive solutions, has degree $\pm1$, so the bound $8$ of \cite{PartI} becomes $7$, which is attained (Example~\ref{ex:EH}).

Let $A$ be the $(n+1)\times N$ matrix with columns $(1,a_i)$. We say that we are in the \emph{generic case} when the following two conditions hold:
\begin{itemize}
\item[(U$_A$)] all the maximal minors of $A$ are nonzero, that is, no $n+1$ points of $\A$ lie on an affine hyperplane;
\item[(U$_C$)] all the maximal minors of $C$ are nonzero, that is, any $n$ columns of $C$ are linearly independent.
\end{itemize}

\subsection*{Two formulas in the generic case}
Unless otherwise specified, the columns of a square submatrix of $A$ or $C$ are taken in increasing order of their indices. Under (U$_A$), every facet $\Gamma$ of $P$ contains exactly $n$ points of $\A$, and we identify $\Gamma$ with the set of their indices.

\begin{introtheorem}[Newton polytope formula]\label{thm:A}
Assume (U$_A$) and (U$_C$), and let $u\in\R^n$ be a vector which does not lie in any cone spanned by $n-1$ columns of $C$. For each facet $\Gamma$ of $P$ choose an index $i_\Gamma\notin\Gamma$. Then
\[
\deg F=(-1)^n\sum_{\substack{\Gamma\ \text{facet of}\ P\\ u\in\cone(C_s,\ s\in\Gamma)}}\sign\det C^\Gamma\cdot\sign\det A^{\Gamma\cup\{i_\Gamma\}},
\]
where $C^\Gamma$ is the submatrix of $C$ with the columns indexed by $\Gamma$, and $A^{\Gamma\cup\{i_\Gamma\}}$ the submatrix of $A$ with the columns indexed by $\Gamma$, followed by the column $i_\Gamma$. The sign of $\det A^{\Gamma\cup\{i_\Gamma\}}$ does not depend on $i_\Gamma$, since the points of $\A$ off $\Gamma$ lie on the same side of it.
\end{introtheorem}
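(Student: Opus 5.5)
Via the moment map, $\deg F$ becomes the degree of the map $G=\hat F/|\hat F|\colon\partial P\to S^{n-1}$ (Proposition~\ref{prop:degree}). Since $\partial P$ is an oriented topological $(n-1)$-sphere, I will compute $\deg F$ as a local degree: a signed count of the preimages of a regular value $u/|u|$. The plan is to deform $\hat F$ on $\partial P$, through maps without zeros, to a piecewise-linear model. On each facet this model is the affine-linear interpolation of the columns $C_s$, $s\in\Gamma$. Its preimages of the ray $\R_{>0}u$ can then be read off facet by facet.

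\textbf{Step 1: reduction to a facet-wise linear model.} On a facet $\Gamma$, by (U$_A$), $\hat F$ restricted to $\Gamma$ is the truncation $F_\Gamma=\sum_{s\in\Gamma}C_s x^{a_s}$. This is a fewnomial with $n$ affinely independent exponents. After a monomial change of coordinates and the moment map, it is the map $y\mapsto\sum_{s\in\Gamma}y_sC_s$ on the open simplex $\{y_s>0,\ \sum y_s=1\}$, up to a positive reparametrisation. The same holds for all lower-dimensional faces, since they are subsimplices. Because $C^\Gamma$ is invertible by (U$_C$), this image avoids $0$. The homotopy $t\hat F+(1-t)L$, where $L$ is the barycentric-linear map sending the vertex $a_s$ to $C_s$, therefore stays nonzero on each closed face: both endpoints map a point with barycentric coordinates $y$ to a positive combination of the same vertex columns, and such combinations are never zero. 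This has to be checked on every face, and the homotopy has to be compatible on common faces. Both properties hold because the reparametrisation is face-compatible: points of a common face go to the same barycentric coordinates. So $\deg F=\deg(L/|L|)$.

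\textbf{Step 2: counting preimages.} By the hypothesis on $u$, the ray $\R_{>0}u$ avoids the images of all $(n-2)$-faces, which lie in cones over $n-1$ columns. So $u/|u|$ is a regular value of $L/|L|$. On facet $\Gamma$, the ray meets $L(\Gamma)$ exactly when $u\in\cone(C_s,s\in\Gamma)$, and then at a single point. The local orientation sign there is the sign of $\det C^\Gamma$, compared with the orientation of $\Gamma$ induced as part of $\partial P$ when its vertices are listed in increasing order. This orientation of $\Gamma$ is given by the sign of $\det A^{\Gamma\cup\{i_\Gamma\}}$: the outward normal convention means appending an interior direction, namely a point $a_{i_\Gamma}$ on the inner side.

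\textbf{Main obstacle.} The main obstacle is the sign bookkeeping in Step 2. It involves three comparisons. The first is between the boundary orientation of $\partial P$, with outward normal first, and the column order in $A^{\Gamma\cup\{i_\Gamma\}}$, where the inner point is appended last. The second is the standard orientation of $S^{n-1}$ used in Proposition~\ref{prop:degree}. The third is the orientation convention by which $\deg F$ is transported through the moment map. I expect these to combine into the global factor $(-1)^n$. I would pin them down by the one-dimensional check $n=1$, taking $F=c_1x^{a_1}+c_2x^{a_2}$, and then by a diagonal example in dimension $n$.
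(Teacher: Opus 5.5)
Your overall route is the paper's: replace $\hat F$ on $\partial P$ by the piecewise linear map $L$ sending $a_s$ to $C_s$, and count the preimages of the direction $u$ facet by facet. Step~1 is correct but heavier than needed. Under (U$_A$), the weights $\hat\pi_s(q)$, $s\in G$, are nonnegative, sum to $1$ and have barycenter $q$ (Proposition~\ref{prop:moment}(2)). So they \emph{are} the barycentric coordinates of $q$ in the simplex $G$, and $\hat F=L$ on $\partial P$, with no reparametrisation at all. Your straight-line homotopy is still valid; it is Lemma~\ref{lem:homotopy}, which the paper uses in the general case. Compatibility on common faces is automatic, since both maps are continuous on all of $\partial P$.

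The genuine gap is Step~2. The sign is the whole content of the formula, and you only say that you ``expect'' the conventions to combine into $(-1)^n$, to be calibrated on examples. That is not a proof as it stands. The $n=1$ check says nothing about $n\ge2$. A single example in dimension $n$ fixes the sign only after two further steps, neither of which you carry out. First, you must prove that the local degree equals $\kappa_n\cdot\sign\det C^\Gamma\cdot\sign\det A^{\Gamma\cup\{i_\Gamma\}}$ with $\kappa_n$ depending on $n$ alone. Second, you must compute that example's degree independently. (The moment map contributes no sign: $\mu$ preserves orientation, and $\deg(\hat F,P^\circ,0)$ is the degree on $\partial P$ with its boundary orientation.)

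The direct computation is short. Write $\Gamma=\{s_1<\dots<s_n\}$ and orient $\Gamma$ by $(a_{s_2}-a_{s_1},\dots,a_{s_n}-a_{s_1})$. With $S^{n-1}$ oriented as a boundary, the local degree of $L/|L|$ at the preimage $q$ is the sign of $\det(L(q),C_{s_2}-C_{s_1},\dots,C_{s_n}-C_{s_1})$; projecting onto the tangent space only removes multiples of $L(q)$. Substituting $L(q)=C_{s_1}+\sum_{r\ge2}\beta_r(C_{s_r}-C_{s_1})$ turns this determinant into $\det C^\Gamma$.

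Next, subtracting the first column of $A^{\Gamma\cup\{i\}}$ from the others gives
\[
\det A^{\Gamma\cup\{i\}}=\det(a_{s_2}-a_{s_1},\dots,a_{s_n}-a_{s_1},a_i-a_{s_1})=(-1)^{n-1}\det(a_i-a_{s_1},a_{s_2}-a_{s_1},\dots,a_{s_n}-a_{s_1}).
\]
The vector $a_i-a_{s_1}$ points inward, while the boundary orientation puts an outward normal first. Hence the two orientations of $\Gamma$ differ by $(-1)^{n-1}\cdot(-1)\cdot\sign\det A^{\Gamma\cup\{i\}}=(-1)^n\sign\det A^{\Gamma\cup\{i\}}$. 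This is exactly the computation in Lemma~\ref{lem:PL}, and it derives the factor $(-1)^n$ rather than assuming it.
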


We write $\varepsilon_\Gamma=\sign\det A^{\Gamma\cup\{i_\Gamma\}}$. For $n=1$, Theorem~\ref{thm:A} is the parity part of Descartes' rule of signs. For $n=2$, it says that $\deg F$ is the winding number of the polygon formed by the columns of $C$ indexed by the vertices of the Newton polygon, and this remains true under the condition (H) below (Corollary~\ref{cor:n2}).

The second formula uses the Gale duality of Bihan and Sottile \cite{BSGale,BS}. Let $B$ be an $N\times k$ matrix whose columns form a basis of $\ker A$, and let $D$ be an $N\times(k+1)$ matrix whose columns form a basis of $\ker C$, with last row $(0,\dots,0,1)$. We denote by $D_i$ the $i$-th row of $D$, put $p_i(y)=\langle D_i,(y,1)\rangle$, and consider the polyhedron
\[
\Delta=\{y\in\R^k:\ p_i(y)>0,\ i=1,\dots,N\}.
\]
Recall that the positive zeros of $F$ correspond to the zeros in $\Delta$ of the Gale system $\psi_j=\sum_iB_{ij}\log p_i$, $j=1,\dots,k$. Let $V(\Delta)$ be the set of the sets $S\subset[N]$ of size $k$ such that the hyperplanes $\{p_s=0\}$, $s\in S$, meet at a vertex of $\overline\Delta$, possibly at infinity.

\begin{introtheorem}[Gale dual formula]\label{thm:B}
Assume (U$_A$) and (U$_C$), and let $e\in\R^k$ be a vector which does not lie in any cone spanned by $k-1$ of the rows $-B_1,\dots,-B_N$. If $\Delta=\emptyset$, then $\deg F=0$. Otherwise, for each $S\in V(\Delta)$ choose an index $i_S\notin S$. Then
\[
\deg F=\pm\sum_{\substack{S\in V(\Delta)\\ e\in\cone(-B_s,\ s\in S)}}\sign\det B_S\cdot\sign\det(D_S;D_{i_S}),
\]
where $B_S$ is the submatrix of $B$ with the rows indexed by $S$, and $(D_S;D_{i_S})$ the submatrix of $D$ with the rows indexed by $S$, followed by the row $i_S$. The sign of $\det(D_S;D_{i_S})$ does not depend on $i_S$, and the global sign only depends on the choices of $B$ and $D$.
\end{introtheorem}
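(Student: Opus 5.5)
The idea is to transport the degree of $F$ to a degree of the Gale system, and then repeat, on the polyhedron $\Delta$, the boundary argument behind Theorem~\ref{thm:A} (on $\partial P$).

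\textbf{Step 1: transport the degree.} The Gale duality correspondence of \cite{BSGale,BS} is given explicitly, so it can be made into an orientation-preserving (or reversing) diffeomorphism.

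\begin{itemize}
\item Write a positive zero as $x^{a_i}=\lambda\, p_i(y)$ with $\lambda>0$. The map $(x,\lambda)\mapsto y$ then identifies $\{F=0\}$ with $\{\psi=0\}\cap\Delta$.
\item More precisely, the monomial vector $(x^{a_i})_i$ ranges over the positive points of the torus orbit $\exp(\mathrm{rowspace}\,A)$, and $F=0$ means this vector lies in $\ker C$.
\item Parametrize $\ker C\cap\R^N_{>0}$ by the cone over $\Delta$, i.e.\ by $(y,t)\mapsto t\,D(y,1)$.
\item Membership in the orbit is then exactly $B^T\log(p(y))=0$, that is $\psi(y)=0$.
\end{itemize}

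A Jacobian computation at a common zero compares $\sign\det DF(x)$ with $\sign\det D\psi(y)$. The relating factor is a product of determinants of the complementary matrices $A,B$ and $C,D$, which has constant sign. This gives $\deg F=\pm\deg(\psi|_\Delta)$, with the sign depending only on the choices of $B$ and $D$. If $\Delta=\emptyset$ there are no zeros, so $\deg F=0$.

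\textbf{Step 2: compactify $\Delta$ and localize at the vertices.} Under (U$_C$) the arrangement $\{p_i=0\}$ is generic, so $\overline\Delta$, projectively closed, is a simple polyhedron. Near a vertex $v_S$ with $S\in V(\Delta)$, the dominant part of $\psi$ is $\sum_{s\in S}B_s\log p_s$. Since the $p_s$ tend to $0^+$, this behaves like $-\sum_{s\in S} B_s\cdot(\text{large positive})$.

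Precompose $\psi$ with a diffeomorphism of $\Delta$ onto the interior of a cube-like polytope (or use the compactification of $\Delta$ by its faces, mirroring the moment map). The boundary behaviour then shows:
\begin{itemize}
\item Near a face where the $p_s$, $s\in T$, vanish, $\psi/|\psi|$ approaches the normalized directions of $\cone(-B_s,\ s\in T)$.
\item Condition (U$_A$) makes any $k$ rows of $B$ independent. Hence the degree is well defined: zeros cannot escape to the boundary.
\item Generic truncations do not vanish, by the Gale dual of the face condition.
\end{itemize}

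\textbf{Step 3: count preimages of $e$.} Compute the degree of $\psi/|\psi|\colon\partial\Delta\to S^{k-1}$ as the number of preimages of the regular value $e/|e|$, counted with local orientation signs.

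Along a face of codimension $j$, the map essentially sweeps the cone spanned by the corresponding $-B_s$. A generic direction $e$ is therefore hit only near vertices $S$ with $e\in\cone(-B_s,\ s\in S)$, and exactly once there. The local sign is the product of two factors:
\begin{itemize}
\item $\sign\det B_S$, the orientation of the cone map;
\item the orientation of the corner of $\Delta$ at $v_S$ relative to the standard orientation, given by $\sign\det(D_S;D_{i_S})$.
\end{itemize}
The second factor is independent of $i_S$ because $p_{i_S}(v_S)>0$ for every $i_S\notin S$; this also handles vertices at infinity, via the homogenized rows.

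\textbf{Main obstacle.} The hardest part is making the boundary model rigorous for unbounded $\Delta$ with vertices at infinity. There the extra "row" $(0,\dots,0,1)$ plays the role of $p_{N+1}=1$, and its behaviour must be checked, together with the compatibility of corner orientations across all vertices so that the global sign is uniform.

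I would try to deduce this step directly from Theorem~\ref{thm:A} applied to an auxiliary system. The Gale system is itself of fewnomial type after a change of variables in which the $p_i$ become coordinates. Its Newton-polytope picture is $\overline\Delta$, with the roles of $(A,C)$ played by $(D,B)$. Vertices of $\overline\Delta$ correspond to facets of the dual polytope, and the cone condition on $u$ becomes the condition on $e$. If that reduction fails, I would carry out the local orientation computation directly.
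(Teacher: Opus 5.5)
\textbf{Verdict: genuine gap.} Your route, transporting $\deg F$ to the Gale map $\psi$ on $\Delta$ and computing that degree at infinity vertex by vertex, is different from the paper's and could work. Step~1 and your local sign are fine in outline. The gap is Step~3, which carries the whole theorem: it is not carried out, and as stated it is not well defined.

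\textbf{Why Step~3 fails as written.} The map $\psi$ is not defined on $\partial\Delta$, and $\psi/|\psi|$ has no continuous extension to the boundary of $\overline\Delta$. Near a point where two or more $p_s$ vanish, $\psi=\sum_{s\in T}\log p_s\,B_s+O(1)$, and its direction depends on the relative rates at which these $p_s$ tend to $0$; you observe this yourself in Step~2. So there is no map $\partial\Delta\to S^{k-1}$ whose preimages of $e/|e|$ you can count. The claims ``hit only near vertices'' and ``exactly once there'' need a uniform localization argument that is absent.

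\textbf{How to close it on your route.} Work with $\psi-\nu e$. Since $|\psi|\to\infty$ uniformly at the boundary, the zeros stay in a fixed compact set for $\nu$ in compact intervals, so the degree does not depend on $\nu$. For $\nu$ large, use compactness: for $\delta>0$ small, $\{i:\ P_i(q)<\delta\}$ is a face of $Q$ for every $q\in Q$. Hence $Q$ is covered by the vertex neighbourhoods $U_S=\{P_i\ge\delta,\ i\notin S\}$. In the coordinates $w_s=\log P_s$ one has $\psi=B_S^Tw+g$ with $g$ bounded and $Dg=O(\max_sP_s)$. This gives exactly one zero in $U_S$ if $e$ lies in the open cone $\cone(-B_s,\ s\in S)$, and none otherwise.

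The uniformity of the global sign, including at vertices at infinity, comes from computing every local sign in the fixed orientation of $\Delta$. The Jacobian of $y\mapsto(p_s/\sum_ip_i)_{s\in S}$ is $\det(D_S;\lambda)/(\sum_ip_i)^{k+1}$ with $\lambda=\sum_iD_i$, and $\sign\det(D_S;\lambda)=\sign\det(D_S;D_{i_S})$.

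\textbf{The proposed shortcut does not work.} The Gale system $\sum_iB_{ij}\log p_i=0$ is not a fewnomial system with Newton polytope $\overline\Delta$. The fewnomial system with exponents coming from $D$ and coefficients $B^T$ is the H-dual $H$ of Section~\ref{sec:h}. Its Newton polytope is the polar of $Q$, its zeros are not those of $\psi$, and $\deg H=\pm\deg F$ is derived in the paper from Theorems~\ref{thm:A} and~\ref{thm:B}. Invoking it here is circular, unless you prove directly that $\deg\psi$ equals the degree of the piecewise linear map $\delta_s\mapsto -B_s$ on the boundary of that polar. That is the same boundary analysis again.

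\textbf{How the paper proceeds.} It avoids any analysis of $\psi$. It writes $e=-B^Th$, and Lemma~\ref{lem:translation} shows three things:
\begin{itemize}
\item $e\in\cone(-B_s,\ s\in S)$ if and only if $[N]\setminus S$ is a simplex of $T_h$;
\item that simplex is positively decorated if and only if $S\in V(\Delta)$;
\item $\omega(S)=\epsilon\,s(\sigma)$, by the complementary-minor identity.
\end{itemize}
The Viro formula of Proposition~\ref{prop:viro-pl}, proved on the Newton side with the piecewise linear map $\Pi_h$, then gives the theorem. Your heuristic is exactly Remark~\ref{rem:gale-system}, where $\psi=\nu e$ is the Gale system of $F^{h,t}$. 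There, ``one zero near each selected vertex'' is justified by Theorem~\ref{thm:viro}, not by a Gale-side estimate.
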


We write $\omega(S)=\sign\det B_S\cdot\sign\det(D_S;D_{i_S})$ and $\deg(\Delta)=\sum_{S\in V(\Delta),\ e\in\cone(-B_s,\ s\in S)}\omega(S)$. For $k=1$, Theorem~\ref{thm:B} gives back the parity part of the Descartes rule for circuits \cite{BD}. For $k=2$, it says that $\deg F$ is a winding number of the rows of $B$ (Proposition~\ref{prop:k2}).

\subsection*{The general case}
Both formulas extend beyond the generic case. The condition we need is the following \emph{half-space condition}:
\begin{itemize}
\item[(H)] for every proper face $G$ of $P$, the columns $C_i$ with $a_i\in G$ lie in an open half-space of $\R^n$.
\end{itemize}
This condition holds under (U$_A$) and (U$_C$), but it is much weaker. Indeed, it puts no condition on the monomials whose exponents lie in the interior of $P$, and it allows any number of points of $\A$ on each face of $P$, as for the set of all lattice points of a lattice polytope. It guarantees that $\deg F$ is defined, and that it does not change under positive rescalings of the columns of $C$.

Observe that (H) is a condition on the oriented matroids of $A$ and $C$, that is, on the signs of their maximal minors. Indeed, the faces of $P$, seen as subsets of $[N]$, are determined by the oriented matroid of $A$: a set $G$ is a face if and only if there is a covector, namely the sign vector of $(\ell(a_i))_i$ for an affine function $\ell$, which vanishes on $G$ and is positive outside $G$ \cite{BLSWZ}. On the other hand, by Gordan's theorem of the alternative \cite[Section~7.8]{Schrijver}, finitely many vectors of $\R^n$ lie in an open half-space if and only if they have no linear dependence with nonnegative coefficients, not all zero, that is, if and only if $0$ is not in their convex hull. In other words, (H) says that for every proper face $G$ of $P$, no vector of the oriented matroid of the columns $C_i$, $i\in G$, has all its signs nonnegative.

\begin{introtheoremprime}[Newton polytope formula, general case]\label{thm:Ap}
Assume (H), and let $T$ be a triangulation of $\partial P$ with vertices in $\A$ such that every face of $P$ is a union of simplices of $T$. Let $u$ be as in Theorem~\ref{thm:A}, and for each $(n-1)$-simplex $\tau$ of $T$ let $i_\tau$ be an index such that $a_{i_\tau}$ is off the facet containing $\tau$. Then
\[
\deg F=(-1)^n\sum_{\substack{\tau\in T,\ \dim\tau=n-1\\ u\in\cone(C_s,\ s\in\tau)}}\sign\det C^\tau\cdot\sign\det A^{\tau\cup\{i_\tau\}} .
\]
\end{introtheoremprime}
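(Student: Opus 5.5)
Via the moment map identification of Proposition~\ref{prop:degree}, (H) makes $\deg F$ well defined and equal to the degree of $\hat F/|\hat F|\colon\partial P\to S^{n-1}$. The plan is to replace $\hat F$ on $\partial P$ by a simpler map with the same degree, namely a piecewise linear map $\Phi$ built from the triangulation $T$. On each simplex $\tau\in T$, let $\Phi$ be the affine interpolation sending the vertex $a_s$ to $C_s$.

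The first step is to show that $\hat F/|\hat F|$ and $\Phi/|\Phi|$ are homotopic through nonvanishing maps on $\partial P$. Fix a proper face $G$ of $P$. By (H) there is a linear form $\lambda_G$ with $\lambda_G(C_i)>0$ for all $a_i\in G$. On the relative interior of $G$, $\hat F$ is a positive combination of the $C_i$ with $a_i\in G$. Likewise $\Phi$ at a point of $G$ is a nonnegative, nonzero combination of vertex columns of a simplex of $T$ contained in $G$; this uses that every face of $P$ is a union of simplices of $T$. So both maps take values in the open half-space $\{\lambda_G>0\}$ over the relative interior of $G$. Near lower-dimensional faces I would use the closed-cover form of this statement: on a closed face $G$, every value of either map is a nonnegative, nonzero combination of $C_i$ with $a_i\in G$. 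This holds because a point of $G$ lies in the relative interior of some subface $G'\subseteq G$, and $\lambda_{G}$ is positive on all $C_i$, $a_i\in G$. The straight-line homotopy $t\hat F+(1-t)\Phi$ therefore never vanishes on $\partial P$, since at each point both endpoints lie in the open half-space $\{\lambda_G>0\}$ for a face $G$ containing that point. Hence $\deg F=\deg(\Phi/|\Phi|)$.

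The second step computes $\deg(\Phi/|\Phi|)$ as a local degree count at a regular value. Take the ray direction $u$, chosen generic as in Theorem~\ref{thm:A}. The points of $\partial P$ mapped into the open ray $\R_{>0}u$ lie in the interiors of the $(n-1)$-simplices $\tau$ with $u\in\cone(C_s,s\in\tau)$, one point per such $\tau$; genericity of $u$ excludes lower-dimensional cones. Such a $\tau$ contributes only if the $C_s$, $s\in\tau$, are independent, and otherwise $\det C^\tau=0$ anyway. Each such point contributes a local sign comparing two orientations: the orientation of $\tau$ induced as part of $\partial P$ (outward normal convention), and the orientation of $S^{n-1}$ pulled back through the linear map $a_s\mapsto C_s$.

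The third step, which I expect to be the main obstacle, is the sign bookkeeping. The orientation sign of the affine map on $\tau$ composed with radial projection is $\sign\det C^\tau$, taking the vertices of $\tau$ in increasing index order. The boundary orientation of $\tau$ relative to that ordering is given by $\sign\det A^{\tau\cup\{i_\tau\}}$, because appending an interior-side point $a_{i_\tau}$ detects which side $P$ lies on; this sign is independent of $i_\tau$ since all other points lie on one side of the facet. Matching the boundary-orientation convention of $\partial P$ (outward normal first) with the convention of appending the extra column last produces the global factor $(-1)^n$. I would fix this factor by checking a single example, such as the standard simplex with $C$ the identity plus $-\mathbf 1$, where $F$ has exactly one positive zero with known Jacobian sign. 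Compactness and reduction to nondegenerate preimages are handled as above, which completes the plan.
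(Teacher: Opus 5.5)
Your proposal is correct and follows the paper's proof essentially step for step: the straight-line homotopy on $\partial P$ between $\hat F$ and the piecewise linear map $\Pi_T$, justified by (H) through a face containing each point, is Lemma~\ref{lem:homotopy}, and the signed count of the preimages of the direction $u$ on the $(n-1)$-simplices of $T$ is Lemma~\ref{lem:PL}. The only difference is the factor $(-1)^n$: the paper computes it directly, using $\det(\Pi(q),C_{s_2}-C_{s_1},\dots,C_{s_n}-C_{s_1})=\det C^\tau$ for the vertex-order orientation of $\tau$, and $\det A^{\tau\cup\{i\}}=(-1)^{n-1}\det(a_i-a_{s_1},a_{s_2}-a_{s_1},\dots,a_{s_n}-a_{s_1})$ with $a_i-a_{s_1}$ pointing inward, while your calibration with the standard simplex is also valid, because the factor is the same for all $\tau$ and your example exists in every dimension $n$.
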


When $P$ is simplicial, we can apply Theorem~\ref{thm:Ap} to the boundary complex of $P$. This shows that Theorem~\ref{thm:A} holds under (H) alone, with the facets replaced by their sets of vertices. In fact, only the vertices of $P$ matter. Let $V\subset[N]$ be the set of indices $i$ such that $a_i$ is a vertex of $P$, and let $F_V(x)=\sum_{i\in V}C_ix^{a_i}$ be the truncation of $F$ to these vertices.

\begin{introproposition}[Vertex reduction]
Assume (H). Then $F_V$ satisfies (H), and $\deg F=\deg F_V$.
\end{introproposition}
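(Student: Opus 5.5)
The plan is to derive both claims from Theorem~\ref{thm:Ap}. That theorem is stated for an arbitrary triangulation $T$ of $\partial P$ with vertices in $\A$ that refines the face structure of $P$. Nothing forces every point of $\A\cap\partial P$ to be used, so I will take $T$ to be a triangulation of $\partial P$ using only vertices of $P$ and refining every face. Such a triangulation exists, for instance a pulling or placing triangulation of the boundary complex, which triangulates each face using only its own vertices.

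\emph{Step 1: $F_V$ satisfies (H).} Its Newton polytope is $\conv(a_i, i\in V)=P$, so it has the same faces. For a proper face $G$, the columns $C_i$ with $i\in V$ and $a_i\in G$ form a subset of the columns $C_i$ with $a_i\in G$. The latter lie in an open half-space by (H) for $F$, hence so does the subset. The support of $F_V$ affinely spans $\R^n$, because its convex hull is $P$, which is full-dimensional. So $\deg F_V$ is defined by the discussion preceding Theorem~\ref{thm:Ap}, for instance via Proposition~\ref{prop:degree}.

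\emph{Step 2: compare the formulas.} Apply Theorem~\ref{thm:Ap} to $F$ and to $F_V$ with the same $T$ and the same generic $u$. The $(n-1)$-simplices $\tau$, the matrices $C^\tau$, and the conditions $u\in\cone(C_s, s\in\tau)$ involve only indices in $V$, so they coincide for the two maps. For the sign $\sign\det A^{\tau\cup\{i_\tau\}}$, the index $i_\tau$ may be chosen off the facet containing $\tau$. Choose it in $V$, which is possible since some vertex of $P$ lies off each facet. This sign depends only on the side of the affine hull of $\tau$ (that is, the facet hyperplane) on which $a_{i_\tau}$ lies, so it is the same for both maps. Hence the two sums agree term by term, and $\deg F=\deg F_V$.

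\emph{Main obstacle.} The delicate point is that $u$ must avoid the cones spanned by $n-1$ columns. For $F_V$ this is a weaker requirement, so a $u$ admissible for $F$ remains admissible for $F_V$. One should also check that the hypotheses of Theorem~\ref{thm:Ap} do not secretly require $T$ to use all boundary points of $\A$. If they did, I would instead argue by homotopy. I would scale the non-vertex columns by $t\in[0,1]$, that is, set $F_t=F_V+t(F-F_V)$. For every proper face $G$, the columns of $F_t$ indexed by $G$ are either columns of $C$ or zero. The nonzero ones lie in the same open half-space as in (H) for $F$, and because $G$ contains a vertex of $P$, the truncation of $F_t$ to $G$ is never identically zero. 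So no truncation of $F_t$ to a proper face has a positive zero, uniformly in $t$. By Proposition~\ref{prop:degree}, $\hat F_t/|\hat F_t|$ is then a homotopy on $\partial P$, and the degree is constant in $t$.
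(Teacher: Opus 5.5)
Your proof is correct, and your fallback argument is exactly the paper's proof. The paper deforms $F_s=F_V+s\sum_{i\notin V}C_ix^{a_i}$ and, for a proper face $G$, takes the vector $w$ given by (H). Since $G$ contains a vertex of $P$, this gives $\langle w,(F_s)_G(y)\rangle>0$, so (N) holds for all $s\in[0,1]$, and Proposition~\ref{prop:degree}(3) concludes.

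Your main route is genuinely different: it goes through the piecewise linear model of Section~\ref{sec:newton}. It is sound, for three reasons:
\begin{enumerate}
\item Theorem~\ref{thm:Ap} only requires the vertices of $T$ to lie in $\A$.
\item The proof of Lemma~\ref{lem:homotopy} only uses that the simplex carrying a point of the relative interior of a face $G$ lies in $G$. So a pulling triangulation of the boundary complex on the vertices of $P$ is allowed.
\item None of this depends on the vertex reduction, so there is no circularity.
\end{enumerate}

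You can in fact stop earlier. For such a $T$, the map $\Pi_T$ is literally the same for $F$ and for $F_V$. Hence Lemma~\ref{lem:homotopy} alone gives $\deg F=\deg F_V$. The choices of $u$ and $i_\tau$, and the term-by-term comparison via Lemma~\ref{lem:PL}, are superfluous.

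The paper's homotopy is more elementary. It needs only homotopy invariance and the half-space vector, and it lives in Section~\ref{sec:degree}, before any triangulation appears. That is why it can be invoked freely later (Remark~\ref{rem:comb}, Theorem~\ref{cor:om}). Your route needs more: the compactified map, the homotopy to $\Pi_T$, and the existence of a face-compatible triangulation on the vertices of $P$. In exchange, it explains the phenomenon structurally. The degree is that of a piecewise linear map on $\partial P$ which only sees the columns at the vertices of $T$, so the vertex reduction is already contained in the generality of Theorem~\ref{thm:Ap}.
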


Thus, under (H), the monomials of $F$ whose exponents are not vertices of the Newton polytope, whether they lie on its boundary or in its interior, do not change the degree (Proposition~\ref{lem:vertex}). This is not true without (H). For instance, for the maps $F_\lambda$ of Remark~\ref{rem:general} with $\lambda>1$, the truncation to the vertices of the Newton polygon is $(1+x^2,\,x^2-y)$, which has no positive zero, while $\deg F_\lambda=-1$.

The vector $u$ can be chosen in a combinatorial way, as a lexicographic perturbation of a basis of columns of $C$. Assume that $C$ has rank $n$, and let $C_{j_1},\dots,C_{j_n}$ be the first basis of $\R^n$ formed by columns of $C$. In other words, $j_1=1$, and $j_r$ is the smallest index such that $C_{j_r}$ does not lie in the span of $C_{j_1},\dots,C_{j_{r-1}}$. For a set $\tau\subset[N]$ of size $n$ such that the columns $C_s$, $s\in\tau$, form a basis, we write $C_{j_r}=\sum_{s\in\tau}\mu^\tau_{s,r}C_s$. By Cramer's rule, $\mu^\tau_{s,r}=\det C^\tau_{s\leftarrow r}/\det C^\tau$, where $C^\tau_{s\leftarrow r}$ is obtained from $C^\tau$ by replacing $C_s$ by $C_{j_r}$. We say that $\tau$ is \emph{selected} if, for every $s\in\tau$, the first nonzero coordinate of $(\mu^\tau_{s,1},\dots,\mu^\tau_{s,n})$ is positive. Note that this only depends on the signs of the maximal minors of $C$.

\begin{introtheoremsecond}[Combinatorial formula]\label{thm:comb}
Assume (H) and that $C$ has rank $n$, and let $T$ be as in Theorem~\ref{thm:Ap}. Then
\[
\deg F=(-1)^n\sum_{\substack{\tau\in T,\ \dim\tau=n-1\\ \tau\ \text{selected}}}\sign\det C^{\tau}\cdot\sign\det A^{\tau\cup\{i_\tau\}}.
\]
If $C$ has rank less than $n$, then $\deg F=0$. Consequently, under (H), $\deg F$ only depends on the oriented matroids of $A$ and $C$.
\end{introtheoremsecond}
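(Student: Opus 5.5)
The plan is to deduce the combinatorial formula from Theorem~\ref{thm:Ap} by choosing $u$ as a lexicographic perturbation of the first basis. Assume $C$ has rank $n$ and set
\[
u_\epsilon=C_{j_1}+\epsilon C_{j_2}+\dots+\epsilon^{n-1}C_{j_n},
\]
with $\epsilon>0$ small. There are finitely many cones spanned by $n-1$ columns of $C$. Each is contained in a hyperplane or lies in a subspace of dimension less than $n-1$. Since $C_{j_1},\dots,C_{j_n}$ is a basis, the curve $\epsilon\mapsto u_\epsilon$ is not contained in any hyperplane: its coordinates in this basis are $(1,\epsilon,\dots,\epsilon^{n-1})$, and a nonzero linear form vanishes on these for only finitely many $\epsilon$. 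So for all sufficiently small $\epsilon>0$, $u_\epsilon$ avoids all these cones and is admissible in Theorem~\ref{thm:Ap}.

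Next, we check which simplices $\tau$ satisfy $u_\epsilon\in\cone(C_s,\ s\in\tau)$ for small $\epsilon$. If the columns $C_s$, $s\in\tau$, are dependent, the cone lies in a hyperplane and is avoided. Otherwise, write $u_\epsilon=\sum_{s\in\tau}\bigl(\sum_r\mu^\tau_{s,r}\epsilon^{r-1}\bigr)C_s$. Each coefficient is a polynomial in $\epsilon$, and it is nonzero for small $\epsilon$ by genericity of $u_\epsilon$. For small $\epsilon$ it is positive exactly when the first nonzero coordinate of $(\mu^\tau_{s,1},\dots,\mu^\tau_{s,n})$ is positive. Such a nonzero coordinate exists, since not all $\mu^\tau_{s,r}$ vanish: otherwise every $C_{j_r}$ would lie in the span of the other columns of $\tau$, contradicting that they span $\R^n$. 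Hence $u_\epsilon$ lies in the cone, for all small $\epsilon$, if and only if $\tau$ is selected. Substituting into Theorem~\ref{thm:Ap} gives the formula. Independence of the choice of $u$ is already part of Theorem~\ref{thm:Ap}, since $\deg F$ is intrinsic.

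If $\operatorname{rank}C<n$, all columns lie in a proper subspace, so $F$ takes values in a hyperplane $L$. I would then argue that the degree vanishes. A point $v\notin L$ with $|v|$ small is a regular value with empty preimage, and by (H) the zero set is compact, so homotopy invariance of the Brouwer degree (Definition~\ref{def:deg}) gives $\deg F=\deg(F-v)=0$. Alternatively, choose $u\notin L$ in Theorem~\ref{thm:Ap}; then no cone contains $u$, so the sum is empty.

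Finally, the dependence only on the oriented matroids follows from the formula itself. The faces of $P$, hence admissible triangulations $T$ and the indices $i_\tau$, are determined by the oriented matroid of $A$. The signs $\sign\det A^{\tau\cup\{i_\tau\}}$ are chirotope values of $A$. The rank of $C$, the indices $j_r$ (determined by which sets are bases), $\sign\det C^\tau$, and the signs of $\mu^\tau_{s,r}$ are all expressed through the chirotope of $C$. The main obstacle I expect is the rank-deficient case, specifically verifying that compactness of the zero set under (H) is enough to justify the perturbation $F-v$. If that step is delicate, I would fall back on the version of the argument through Theorem~\ref{thm:Ap}.
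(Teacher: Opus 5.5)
Your proposal is correct and follows the paper's proof essentially step by step: you plug the lexicographic vector $u_\eta=C_{j_1}+\eta C_{j_2}+\dots+\eta^{n-1}C_{j_n}$ into Theorem~\ref{thm:Ap}, read the sign of each coefficient polynomial off its first nonzero coefficient, and read the oriented matroid dependence off the resulting formula. In the rank-deficient case your fallback is the same observation as the paper's (an empty sum in Theorem~\ref{thm:Ap} for $u$ off the hyperplane; the paper says $\Pi_T/|\Pi_T|$ is not onto, via Lemma~\ref{lem:homotopy}), and your worry there is unfounded: the perturbation argument with $F-sv$ is also valid, since these maps have no zeros at all for $s>0$.
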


A canonical choice of $T$ is the pulling triangulation \cite[Section~4.3]{DLRS}, and Remark~\ref{rem:comb} gives the analogous version of Theorem~\ref{thm:Bp}. The last assertion of Theorem~\ref{thm:comb} holds because a triangulation $T$ as in Theorem~\ref{thm:Ap} only depends on the oriented matroid of $\A$. Together with the vertex reduction, we obtain the main structural result of the paper.

\begin{introtheoremnamed}[Oriented matroid invariance]
Assume (H). Then $\deg F$ only depends on the oriented matroids of the columns of $A$ and of $C$ indexed by the vertices of $P$.
\end{introtheoremnamed}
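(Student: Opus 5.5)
The plan is to chain the vertex reduction with the combinatorial formula (Theorem~\ref{thm:comb}), applied to $F_V$ instead of $F$.

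First, by the Vertex reduction proposition, $F_V$ satisfies (H) and $\deg F=\deg F_V$. So it suffices to show that $\deg F_V$ depends only on the oriented matroids of $A_V$ and $C_V$. These are the submatrices of $A$ and $C$ with columns indexed by $V$. Note that $F_V$ is again a fewnomial map of the same type, with support $\A_V=\{a_i: i\in V\}$ and Newton polytope $P$ itself. We can therefore apply Theorem~\ref{thm:comb} to $F_V$, with $N$ replaced by $|V|$ and $k$ adjusted accordingly. If $C_V$ has rank less than $n$, then $\deg F_V=0$, which trivially depends only on the oriented matroids. Whether the rank is less than $n$ is itself read off the oriented matroid, since it means all maximal minors vanish.

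Next, assume $C_V$ has rank $n$. Theorem~\ref{thm:comb} expresses $\deg F_V$ as a signed sum over the selected $(n-1)$-simplices $\tau$ of a triangulation $T$ of $\partial P$ with vertices in $\A_V$, refining the faces of $P$. We check that every ingredient is oriented-matroid data.
\begin{enumerate}
\item The faces of $P$, as subsets of $V$, are determined by the covectors of $A_V$, as recalled in the introduction.
\item We take $T$ to be the pulling triangulation of $\partial P$ with respect to the order on $V$. It is built from the face lattice and from which subsets are affinely independent, so it is determined by the oriented matroid of $A_V$.
\item The signs $\sign\det C^\tau$ and $\sign\det A^{\tau\cup\{i_\tau\}}$ are signs of maximal minors, i.e.\ chirotope values. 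The choice of $i_\tau$ off the facet containing $\tau$ is made by a face-lattice criterion, and the sign does not depend on it.
\item Selectedness of $\tau$ depends only on the signs of maximal minors of $C_V$, as noted before Theorem~\ref{thm:comb}. The same holds for the first basis $j_1,\dots,j_n$, since each $j_r$ is defined by span conditions, which are vanishing conditions on minors.
\end{enumerate}
Hence the whole sum is determined by the chirotopes of $A_V$ and $C_V$, up to the global reorientation conventions. These conventions are fixed by the indexing order, which we keep.

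The main obstacle is a subtle point: Theorem~\ref{thm:comb} already asserts that $\deg F$ depends only on the oriented matroids, so the only new content is removing the non-vertex columns. That removal is exactly the vertex reduction. One must also make sure that restricting to $V$ does not disturb the hypotheses, i.e.\ that (H) for $F_V$ holds and that $P=\conv(\A_V)$ still spans $\R^n$ affinely, which is immediate. I would also verify that the degree is insensitive to how the indices in $V$ are reordered. If two presentations share the same oriented matroids under a fixed labelling, the formulas produce identical sums, so no reordering issue actually arises.
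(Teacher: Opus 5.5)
Your proposal is correct and is the paper's proof: reduce to $F_V$ by Proposition~\ref{lem:vertex}, then apply Theorem~\ref{thm:comb} to $F_V$, re-deriving its last assertion along the way (the faces and the pulling triangulation come from $A_V$; the rank, first basis, selected sets and signs come from the maximal minors of $C_V$ and $A_V$). One correction to your aside on ``global reorientation conventions'': the argument shows dependence on the signs of the maximal minors (the chirotopes, which is how the paper uses ``oriented matroid''), and their global sign is not fixed by the indexing order, since replacing $F$ by $MF$ with $\det M<0$ keeps the oriented matroid of the columns of $C$ but multiplies $\deg F$ by $-1$.
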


In other words, under (H) the degree does not see the monomials of $F$ whose exponents are not vertices of $P$. It does not see the actual values of the exponents and coefficients of the other monomials either, but only the signs of the maximal minors of the corresponding columns of $A$ and $C$ (Theorem~\ref{cor:om}). Since (H) itself only depends on the oriented matroids of $A$ and $C$, the degree is a function on the set of pairs of oriented matroids satisfying (H). Again, this is not true without (H): the degree can then change under a positive rescaling of the columns of $C$, which preserves the oriented matroids (Remark~\ref{rem:general}).

\begin{introtheoremprime}[Gale dual formula, general case]\label{thm:Bp}
Assume (U$_C$) and (H). Then the formula of Theorem~\ref{thm:B} holds.
\end{introtheoremprime}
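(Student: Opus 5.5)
We will reduce the general case to the generic Gale dual formula (Theorem~\ref{thm:B}) by a perturbation of the exponents, keeping $C$ fixed. Since (U$_C$) is assumed, only (U$_A$) may fail. We replace $\A$ by a nearby configuration $\A_t$ with $A_t\to A$ as $t\to0$ and (U$_A$) holding for $t>0$ small, and we let $F_t$ be the corresponding map with the same coefficient matrix $C$.

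The first step is to show that $\deg F_t=\deg F$ for small $t$. By the vertex reduction we have $\deg F=\deg F_V$, and the same holds for $F_t$ as long as (H) holds for it. The perturbation has to be chosen so that the vertex set of $P_t$ contains $V$ and every face of $P_t$ has index set contained in a face of $P$. Then (H) for $F$ implies (H) for $F_t$, because a subset of a set of vectors lying in an open half-space still lies in it. Concretely, we perturb the points in the interior of $P$ generically while staying inside $P$. We perturb the boundary points slightly toward the interior, except the vertices, and then perturb the vertices generically. For small $t$ the faces of $P_t$ refine those of $P$. Alternatively, and more robustly, we use Theorem~\ref{thm:comb}: the degree depends only on oriented matroid data of $A$ restricted to faces through a triangulation $T$. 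A generic perturbation realizing $T$ as the boundary triangulation, namely a pulling or placing refinement, gives $\deg F_t=\deg F$ directly from Theorem~\ref{thm:Ap}, since the signs $\sign\det A^{\tau\cup\{i_\tau\}}$ are preserved for small $t$.

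The second step is to apply Theorem~\ref{thm:B} to $F_t$ and show that the right-hand side is independent of $t$ and equal to the same expression computed with $B$. The matrix $D$ depends only on $C$, so $\Delta$, $V(\Delta)$ and the signs $\sign\det(D_S;D_{i_S})$ do not change. Only the Gale matrix $B_t$ of $A_t$ changes, and it can be chosen with $B_t\to B$. Here one has to be careful, because Theorem~\ref{thm:B} uses the vector $e$ in general position with respect to the $B_t$ rows. We fix $e$ generic for $B$ as in the statement. For $t$ small it is then generic for $B_t$, and the set of $S$ with $e\in\cone(-B_{t,s},s\in S)$ coincides with that for $B$, except possibly when $\det B_S=0$. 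Cones with $\det B_S\ne0$ are stable under small perturbation, and $\sign\det B_{t,S}=\sign\det B_S$ for them.

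The main obstacle is the sets $S$ with $\det B_S=0$. For these, the cone is degenerate and $e$ does not lie in it for the unperturbed $B$, since $e$ avoids cones of $k-1$ rows. We must show that for small $t$, $e$ still avoids $\cone(-B_{t,s},s\in S)$, which is a thin cone near a lower-dimensional one. This follows from the genericity of $e$: the limiting cone is contained in a cone on at most $k-1$ rows. By a Carathéodory argument, $e$ has positive distance from all such cones, hence from the nearby thin cones for $t$ small. Combining this with the first step gives $\deg F=\deg F_t=\pm\deg(\Delta)$, with the sign fixed by continuity of the choice of $B_t$. This yields the formula of Theorem~\ref{thm:B} under (U$_C$) and (H). The case $\Delta=\emptyset$ is immediate, since then $\deg F_t=0$ for all $t$.
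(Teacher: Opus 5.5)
\textbf{Circularity.} Within this paper, your reduction is circular. Theorem~\ref{thm:B} is not proved independently: its proof just observes that (U$_A$) and (U$_C$) imply (H), so that it is the special case of Theorem~\ref{thm:Bp}. Your argument therefore needs a separate proof of the generic case. The natural one works verbatim under (U$_C$) and (H), which makes perturbing the exponents unnecessary.

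\textbf{The missing idea.} The Viro formula $\deg F=\sum s(\sigma)$ of Proposition~\ref{prop:viro-pl} only uses (U$_C$) and (H), the latter through Lemma~\ref{lem:homotopy}. Lemma~\ref{lem:translation}, applied with $h$ such that $e=-B^Th$, translates it term by term:
\begin{itemize}
\item the simplices of $T_h$ are the complements of the sets $S$ with $e\in\cone(-B_s,\ s\in S)$, and sets with $\det B_S=0$ are excluded by Carath\'eodory;
\item the positively decorated simplices correspond to the $S\in V(\Delta)$;
\item $s(\sigma)=\epsilon\,\omega(S)$ with a global sign $\epsilon$.
\end{itemize}
If $\Delta=\emptyset$, nothing is positively decorated and $\deg F=0$. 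This is the paper's proof.

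\textbf{Step~1, even granting Theorem~\ref{thm:B}.} The vertex-reduction option proves nothing. When the vertices of $P$ violate (U$_A$) they must move, so $F_V$ and $(F_t)_V$ have different exponents. Proposition~\ref{prop:degree}(3) only deforms coefficients, so it does not relate their degrees. Comparing Theorem~\ref{thm:Ap} for $F$ with Theorem~\ref{thm:A} for $F_t$ is sound, but only if you \emph{construct} $\A_t$ so that the facets of $P_t$ are exactly the $(n-1)$-simplices of $T$. A generic small perturbation does not guarantee this. For example, push the midpoint of an edge of a tetrahedron beyond one adjacent facet but beneath the other: you get a facet of $P_t$ whose three limit points are collinear.

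\textbf{Step~2, pointedness.} The stability of $e\notin\cone(-B_{t,s},\ s\in S)$ for $S\in V(\Delta)$ with $\det B_S=0$ needs these cones to be pointed, and you do not check this. It follows from (H), which you never use in this step. A nonnegative dependence among the $B_s$, $s\in S$, would give a coface $S'\subset S$ with $K_{S'}\supset K_S\neq\{0\}$, contradicting Lemma~\ref{lem:gale-N}(2).

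\textbf{Step~2, admissibility of $e$.} Your claim that $e$ stays admissible for $B_t$ is false in general. Suppose a face of $P$ contains at least $n+2$ points of $\A$. Then the at most $k-1$ rows $B_s$ indexed by the complementary coface have a positive dependence; for $k=2$ this means some $B_s=0$. After perturbation these rows become independent. Their cone contains the direction of the small vector $-\sum_s\lambda_sB_{t,s}$, and so it can contain $e$. You can repair this by moving $e$ slightly, since the sum computed with $B$ is locally constant in $e$, but the repair has to be made explicitly.
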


On the Gale side, the faces of $P$ correspond to the \emph{cofaces}, that is, to the sets $S$ such that the rows $B_s$, $s\in S$, have a positive linear dependence. The condition (H) says that for every coface $S$, the hyperplanes $\{p_s=0\}$, $s\in S$, have no common point on $\overline\Delta$ (Lemma~\ref{lem:gale-N}). The vertex reduction also has a Gale dual version: only the vertices of $\overline\Delta$ on the face cut out by the hyperplanes $\{p_i=0\}$, $i\notin V$, contribute to the formula of Theorem~\ref{thm:B} (Remark~\ref{rem:gale-vertex}).

\subsection*{Half signed counts}
The auxiliary vectors $u$ and $e$ can also be replaced by a generic change of basis. Let $C'$ be the matrix formed by the first $n-1$ rows of $C$, and assume that neither $e_n$ nor $-e_n$ lies in a cone spanned by $n-1$ columns of $C$; this holds in particular if all the maximal minors of $C'$ are nonzero, and it can always be achieved by replacing $F$ by $MF$ for a generic $M\in GL_n(\R)$, which does not change the positive zeros. Then both $u=e_n$ and $u=-e_n$ can be used in Theorem~\ref{thm:Ap}, and adding the two formulas we get
\[
\deg F=\frac{(-1)^n}2\sum_{\substack{\tau\in T,\ \dim\tau=n-1\\ (C'_s)_{s\in\tau}\ \text{positively span}\ \R^{n-1}}}\sign\det C^\tau\cdot\sign\det A^{\tau\cup\{i_\tau\}}
\]
(Proposition~\ref{prop:ends-newton}). The terms of this sum correspond to the ends of the curve defined by the first $n-1$ components of $F$: a compact component of this curve contributes nothing to $\deg F$, and a noncompact one contributes $\pm1$ or $0$, according to whether $f_n$ takes opposite signs or the same sign near its two ends (Remark~\ref{rem:curve-newton}). In the same way, after a generic change of basis of $\ker A$, $\deg F$ is $\pm\frac12$ times the sum of the signs $\omega(S)$ over the vertices $S\in V(\Delta)$ such that the first $k-1$ coordinates of the rows $B_s$, $s\in S$, positively span $\R^{k-1}$ (Proposition~\ref{prop:ends-gale}). For generic data, these vertices are the limits of the ends of the curve $\mathcal C\subset\Delta$ defined by the first $k-1$ equations of the Gale system, and each of them is reached by exactly one end \cite{PartI}. Thus $\deg F$ is half the signed count of the ends of $\mathcal C$. This is actually where this paper started. In \cite{PartI}, upper bounds for the number of positive solutions are obtained by a Khovanskii--Rolle argument on the Gale side, and the number of noncompact components of $\mathcal C$, which bounds $|\deg F|$, is bounded by half the number of these vertices.

\subsection*{The method}
We give several formulas for the degree, and several proofs of them, since each point of view explains a different feature. We prove Theorems~\ref{thm:A} and~\ref{thm:Ap} directly on the boundary of $P$. Under (H), the restriction of $\hat F$ to $\partial P$ can be deformed, without zeros, into the piecewise linear map which is affine on the simplices of $T$ and sends $a_i$ to $C_i$. In the generic case, it is already this map. We then compute the degree of this piecewise linear map by counting the preimages of the direction $u$.

The same piecewise linear argument applies to the map $\Pi_h\colon P\to\R^n$ which is affine on the simplices of the regular triangulation $T_h$ of $\A$ induced by heights $h$, and which sends $a_i$ to $C_i$. It shows that $\deg F$ is the degree of $\Pi_h$ on $P$. Recall from Bihan, Santos and Spaenlehauer \cite{BSS} that a simplex $\sigma$ of $T_h$ is \emph{positively decorated} by $C$ if the columns $C_l$, $l\in\sigma$, positively span $\R^n$. Under (U$_C$), $\Pi_h$ vanishes exactly once in each positively decorated simplex, and nowhere else. The local degree at this zero is a sign $s(\sigma)$, namely the product of the orientations of $\sigma$ induced by the points $a_l$ and by the columns $C_l$, $l\in\sigma$. This sign was introduced in \cite[Section~5]{BSS}, where it is shown that it alternates between adjacent positively decorated simplices \cite[Proposition~5.3]{BSS}. We obtain the \emph{Viro formula}
\[
\deg F=\sum_{\substack{\sigma\in T_h\\ \sigma\ \text{positively decorated}}}s(\sigma)
\]
(Proposition~\ref{prop:viro-pl}). Writing it with Gale dual matrices, through the relation between complementary maximal minors of a matrix and of its Gale dual, we get Theorems~\ref{thm:B} and~\ref{thm:Bp}.

The name comes from Viro's construction. Multiplying the coefficient of $x^{a_i}$ by $t^{h_i}$ preserves (H), hence the degree. For $t$ small, the positive zeros of the deformed map are the zeros constructed in \cite[Theorem~3.4]{BSS} near the positively decorated simplices, with Jacobian signs $s(\sigma)$, and there are no others (Theorem~\ref{thm:viro}). On the Gale side, the Viro deformation becomes the equation $\psi=\nu e$, where $\nu=-\log t\to+\infty$ and $e=-B^Th$. Its solutions concentrate near the vertices $S$ of $\overline\Delta$ such that $e\in\cone(-B_s,\ s\in S)$ (Remark~\ref{rem:gale-system}), which explains the role of $e$ in Theorem~\ref{thm:B}. Conversely, Theorem~\ref{thm:A} is itself a Viro formula, for the coning triangulation from an auxiliary interior point decorated by $-u$ (Section~\ref{sec:coning}).

\subsection*{Consequences}
The half signed counts give a Descartes rule of signs for the degree. We say that $m$ vectors of $\R^{m-1}$ form a \emph{sign change} if they positively span $\R^{m-1}$; for $m=2$, this means that two real numbers are nonzero and have opposite signs.

\begin{introtheorem}[Descartes rule of signs for the degree]\label{thm:descartes-intro}
Assume (H).
\begin{enumerate}
\item Let $T$ be a triangulation of $\partial P$ as in Theorem~\ref{thm:Ap}, and let $g_1,\dots,g_{n-1}$ be linear combinations of $f_1,\dots,f_n$ such that any $n-1$ of their coefficient vectors at the vertices of $T$ are linearly independent. Then $2|\deg F|$ is at most the number of $(n-1)$-simplices $\tau$ of $T$ at which the coefficient vectors of $(g_1,\dots,g_{n-1})$ form a sign change, that is, such that the truncated system $g_1|_\tau=\dots=g_{n-1}|_\tau=0$ has a positive solution.
\item Assume moreover (U$_C$), and let $b_1,\dots,b_{k-1}$ be affine relations of $\A$ such that any $k-1$ of the vectors $(b_1(i),\dots,b_{k-1}(i))$, $i\in[N]$, are linearly independent. Then $2|\deg F|$ is at most the number of vertices $S\in V(\Delta)$ such that the vectors $(b_1(s),\dots,b_{k-1}(s))$, $s\in S$, form a sign change.
\end{enumerate}
\end{introtheorem}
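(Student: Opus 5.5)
The plan is to deduce both parts from the half signed counts, namely Propositions~\ref{prop:ends-newton} and~\ref{prop:ends-gale}, applied after a suitable change of basis. The key observation is that $\deg F$ is invariant under replacing $F$ by $MF$ for $M\in GL_n(\R)$, up to the sign $\sign\det M$. It is also invariant under changing the basis $B$ of $\ker A$, up to a global sign. These operations do not change the absolute value, and they preserve (H), since (H) is a statement about positive dependences among columns and such dependences are preserved by invertible linear maps.

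For part (1), we first note the degenerate case. If $g_1,\dots,g_{n-1}$ were linearly dependent, then some $n-1$ of their coefficient vectors at the vertices of $T$ would be dependent, because the vertices of $P$ are vertices of $T$ and the coefficient vectors span. So we may assume they are independent. We complete them with some $f_j$ to a basis of the span of $f_1,\dots,f_n$. This gives $M\in GL_n(\R)$ such that $MF=(g_1,\dots,g_{n-1},g_n)$. If $C$ has rank less than $n$, then $\deg F=0$ by Theorem~\ref{thm:comb} and there is nothing to prove, so we assume rank $n$.

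The matrix $C'$ of the first $n-1$ rows of $MC$ then consists of the coefficient vectors of the $g$'s. The hypothesis says that the columns $C'_s$, for $s$ a vertex of $T$, have all maximal minors nonzero. This is exactly what Proposition~\ref{prop:ends-newton} needs, since only simplices of $T$ enter the formula, so that $\pm e_n$ avoid all cones spanned by $n-1$ columns indexed by a simplex of $T$. It gives
\[
2|\deg F| = \Bigl|\sum_{\tau} \pm 1\Bigr| \le \#\{\tau:\ (C'_s)_{s\in\tau}\text{ positively span }\R^{n-1}\},
\]
and the simplices counted are exactly the sign changes. One step needs a check: Proposition~\ref{prop:ends-newton} is stated with the hypothesis on all columns. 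I would verify from its proof (adding Theorem~\ref{thm:Ap} for $u=\pm e_n$) that only the columns indexed by vertices of $T$ are used. Alternatively, I would first pass to $F$ restricted to the vertices of $T$, using the invariance arguments behind the vertex reduction.

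The equivalence with positive solvability of the truncated system is Gordan's theorem. The truncated system $g|_\tau=0$ has a positive solution iff $0$ is a positive combination of the $C'_s$ with all coefficients positive, because the monomials $x^{a_s}$, $s\in\tau$, are independent as the $a_s$ are affinely independent. For $n$ vectors in general position in $\R^{n-1}$, this is equivalent to positively spanning.

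For part (2), the same argument runs on the Gale side. The affine relations $b_1,\dots,b_{k-1}$ are vectors in $\ker A$; if they were dependent, the genericity hypothesis would fail, so they are independent. We complete them to a basis $B$ of $\ker A$ with first columns $b_1,\dots,b_{k-1}$. The hypothesis is then the genericity required in Proposition~\ref{prop:ends-gale}, under (U$_C$) and (H) through Theorem~\ref{thm:Bp}. Its formula writes $\deg F$ as $\pm\frac12$ times a sum of signs $\omega(S)$ over the vertices $S\in V(\Delta)$ where the first $k-1$ coordinates of the rows form a sign change, and the triangle inequality gives the bound. If $\Delta=\emptyset$, then $\deg F=0$ and there is nothing to prove.

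The main obstacle is matching the genericity hypotheses exactly with those of the propositions; the rest is the triangle inequality.
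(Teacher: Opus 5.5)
Your proposal is correct and follows the paper's own proof (Theorem~\ref{thm:descartes}). You take the $g_j$ as the first rows of $MC$ (resp.\ the $b_j$ as the first columns of $B$), note that the proofs of Propositions~\ref{prop:ends-newton} and~\ref{prop:ends-gale} only need $\pm e_n$ (resp.\ $\pm e_k$) to avoid cones spanned by $n-1$ columns indexed by vertices of $T$ (resp.\ by $k-1$ of the $-B_i$), which your independence hypothesis guarantees, and conclude by the triangle inequality; the paper merely phrases this under a slightly weaker admissibility condition.
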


This is Theorem~\ref{thm:descartes}, where the linear independence conditions are replaced by weaker admissibility conditions, which hold for generic choices. Under (U$_A$), we can take for $T$ the boundary complex of $P$, so that the simplices $\tau$ are the facets of $P$. By the vertex reduction, we can also take for $T$ a triangulation of $\partial P$ whose vertices are the vertices of $P$. For $n=2$, we take for $T$ the edges of the Newton polygon. An edge is a sign change of a linear combination $g$ of $f_1$ and $f_2$ if and only if the coefficients of $g$ at its two endpoints have opposite signs. Thus $|\deg F|$ is at most half the number of cyclic sign changes, at the vertices of the Newton polygon, of the coefficients of any linear combination of $f_1$ and $f_2$ whose coefficients at these vertices are nonzero. In particular, if some linear combination has coefficients of the same sign at all the vertices, then $\deg F=0$. Similarly, for $k=2$, $|\deg F|$ is at most half the number of cyclic sign changes along the edges of $\overline\Delta$ of any affine relation of $\A$ whose values are nonzero, and for $n=1$, Theorem~\ref{thm:A} is the parity part of the classical Descartes rule. In this sense, Theorem~\ref{thm:descartes-intro} is a Descartes rule of signs in any dimension.

Bounding the number of terms of the half signed counts by the Upper Bound Theorem \cite{McM}, we obtain the following result.

\begin{introtheorem}\label{thm:C}
Assume (U$_A$) and (U$_C$). If $n,k\ge2$, then
\[
|\deg F|<\tfrac12\min\big(M(n,N),M(k,N)\big),\qquad M(d,N)=\binom{N-\lceil d/2\rceil}{\lfloor d/2\rfloor}+\binom{N-\lfloor d/2\rfloor-1}{\lceil d/2\rceil-1},
\]
where $M(d,N)$ is the number of facets of a cyclic $d$-polytope with $N$ vertices. If $n=1$ or $k=1$, then $|\deg F|\le1$.
\end{introtheorem}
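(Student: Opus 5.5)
We will bound $|\deg F|$ by the half signed counts of Propositions~\ref{prop:ends-newton} and~\ref{prop:ends-gale}, and then bound the number of their terms with the Upper Bound Theorem, on the Newton side and on the Gale side separately.

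\emph{Newton side.} First replace $F$ by $MF$ for a generic $M\in GL_n(\R)$. This does not change the positive zeros. Since (U$_C$) is preserved, all maximal minors of $C'$ become nonzero. By (U$_A$), $P$ is simplicial and each facet contains exactly $n$ points of $\A$, so we may take $T$ to be the boundary complex of $P$. Proposition~\ref{prop:ends-newton} then gives
\[
2|\deg F|\le\#\{\Gamma\text{ facet}:\ (C'_s)_{s\in\Gamma}\text{ positively span }\R^{n-1}\}\le f_{n-1}(P).
\]
The Upper Bound Theorem applied to the simplicial $n$-polytope $P$, which has at most $N$ vertices, gives $f_{n-1}(P)\le M(n,N)$, hence $|\deg F|\le\frac12M(n,N)$.

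To obtain strict inequality, we must show that some facet is \emph{not} a sign change. Take any $n-2$-face (ridge) and the two facets $\Gamma_1,\Gamma_2$ containing it. The $n-1$ common columns together with the two extra columns give two sets of $n$ vectors in $\R^{n-1}$. One way to argue is to compose $M$ with a projection making some vertex column $C'_v$ extreme, for instance a generic linear functional $\phi$ maximized at $C'_v$. Then no facet containing $v$ positively spans, since all its vectors would need... This is not quite right, so instead we use the sign count. Positive spanning of $n$ generic vectors in $\R^{n-1}$ means that their unique linear dependence has all coefficients of the same sign. Around a vertex $v$ of $P$ (with $n\ge2$), the vertex figure is a simplicial $(n-1)$-sphere, and the facets through $v$ cannot all be sign changes. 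Indeed, if they were, $-C'_v$ would lie in the interior of the cone spanned by every complementary $(n-1)$-set, and these cones triangulate a full sphere through the link, giving degree contradictions (a point covered with the wrong multiplicity). If this fails, an alternative is to choose $M$ so that the first row of $C'$ is positive on all vertex columns, which is possible when these columns lie in an open half-space. Since $F_V$ has degree possibly nonzero this may not hold in general, so the main obstacle is exactly this strictness. A fallback is parity: $M(n,N)$ is even for $n\ge2$ in the relevant cases, while $2|\deg F|$ has the parity of the number of terms.

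\emph{Gale side.} After a generic change of basis of $\ker A$, Proposition~\ref{prop:ends-gale} gives $2|\deg F|\le|V(\Delta)|$. Under (U$_C$), $\overline\Delta$ is a simple polyhedron in $\R^k$ cut out by $N$ half-spaces. Projectivizing it (adding the hyperplane at infinity through the last row $(0,\dots,0,1)$ of $D$) gives a simple $k$-polytope with at most $N$ facets, whose vertices include $V(\Delta)$ together with the vertices at infinity. By duality and the Upper Bound Theorem, the number of vertices is at most $M(k,N)$. Strictness again needs one uncounted vertex, for instance a vertex on the added facet, or one at which the $k-1$ first coordinates do not positively span. We will argue the latter by the same local argument at a vertex, as above.

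The case $n=1$ or $k=1$ is immediate from Theorem~\ref{thm:A} or Theorem~\ref{thm:B}. For $n=1$, $P$ is a segment with two facets (its endpoints), and the sum has at most two terms of opposite contributions when both are present, giving $|\deg F|\le1$. The case $k=1$ is the dual statement, using the two vertices of the interval $\overline\Delta\subset\R$.
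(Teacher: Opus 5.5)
\textbf{Gap: the strict inequality is not proved.} Your strategy is the paper's: use the half signed counts of Propositions~\ref{prop:ends-newton} and~\ref{prop:ends-gale}, then the Upper Bound Theorem for the simplicial polytope $P$ and the simple polytope $Q$. It correctly gives the non-strict bound $|\deg F|\le\frac12\min(M(n,N),M(k,N))$ and the cases $n=1$, $k=1$. But the theorem asserts a strict inequality, and none of your arguments for it works.

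\emph{The local claim at a vertex is false.} Take $n=2$ and first-row coefficients $+,-,+$ at three consecutive vertices $u,v,w$. Then both edges through $v$ are sign changes, and no covering contradiction arises. For $N$ even with alternating signs, every edge is a sign change. For instance, $C_1=(1,1)$, $C_2=(-1,2)$, $C_3=(1,3)$, $C_4=(-1,5)$ at the vertices of a quadrilateral satisfy (U$_C$), and all four edges are counted. Here $\deg F=0$ by cancellation. So for a fixed change of basis there need not be any uncounted facet.

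\emph{The parity fallback fails.} The number of terms is always even, being congruent to $2\deg F$ modulo $2$. Parity therefore excludes equality only when $M(n,N)$ is odd. You need it exactly when $M(n,N)$ is even, e.g.\ $M(3,N)=2N-4$, or $n=2$ with $N$ even.

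\emph{No vertex is uncounted on the Gale side by construction.} The row $D_N=(0,\dots,0,1)$ already gives the hyperplane at infinity, so no facet is added. Vertices at infinity belong to $V(\Delta)$ and can contribute; in Example~\ref{ex:run-gale}, $\{1,5\}$ is selected by $e=(1,0)$.

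This is not cosmetic. For $n=2$ and $N$ even, the non-strict bound gives $N/2$, while the true maximum is $N/2-1$ (Theorem~\ref{thm:E}).

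\textbf{Missing idea: choose the change of basis so that one prescribed facet (or vertex) drops out.} A facet $\Gamma$ contributes for $MF$ if and only if the line $L=M^{-1}(\R e_n)$ meets $\cone(C_s,\ s\in\Gamma)$ outside the origin.
\begin{itemize}
\item By (U$_C$) this cone is simplicial and $n$-dimensional. For $n\ge2$, its union with its opposite is not $\R^n$: in the basis of its generators, $(1,-1,0,\dots,0)$ lies in neither.
\item The lines avoiding both cones form a nonempty open set. So you can choose $L$ there and still generic, i.e.\ avoiding the lower-dimensional cones spanned by $n-1$ columns.
\item Then the fixed facet $\Gamma$ contributes nothing, and $2|\deg F|\le f_{n-1}(P)-1<M(n,N)$.
\end{itemize}
On the Gale side, do the same with a generic change of basis of $\ker A$. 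Choose its line to miss $\pm\cone(-B_s,\ s\in S_0)$ for one fixed $S_0\in V(\Delta)$; this cone is simplicial by (U$_A$). This gives $2|\deg F|<\#V(\Delta)\le M(k,N)$ for $k\ge2$.
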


This bound grows like $N^{\min(\lfloor n/2\rfloor,\lfloor k/2\rfloor)}$, which is much slower than the known upper bounds for the number of positive zeros \cite{BS,PartI}. Its symmetry in $n$ and $k$ is explained by the H-duality of Bihan, Dickenstein and Forsg\aa rd \cite{BDF24}. This duality associates with $F$ a fewnomial map $H$ with $k$ components in $k$ variables and the same number $N$ of monomials. In \cite{BDF24} it was asked whether the H-duality preserves the local multiplicity at corresponding zeros. In general $F$ and $H$ do not have the same number of positive zeros. However, our formulas show that the H-duality preserves the topological degree. Indeed, it exchanges the formula of Theorem~\ref{thm:A} for $F$ and the formula of Theorem~\ref{thm:B} for $H$, so that $\deg H=\pm\deg F$ (Theorem~\ref{thm:h-duality}). As a surprising consequence, the set of the absolute values of the degrees is symmetric in $n$ and $k$.

\begin{introtheorem}\label{thm:D}
Let $\mathcal D(n,k)$ be the set of the degrees $|\deg F|$ of the fewnomial maps with $n$ components and $N=n+k+1$ monomials satisfying (U$_A$) and (U$_C$). Then $\mathcal D(n,k)=\mathcal D(k,n)$.
\end{introtheorem}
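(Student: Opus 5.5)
The plan is to deduce Theorem~\ref{thm:D} from the H-duality statement announced just before it: $\deg H=\pm\deg F$ (Theorem~\ref{thm:h-duality}). Concretely, I will show that $F\mapsto H$ sends fewnomial maps with $n$ components, $N=n+k+1$ monomials and satisfying (U$_A$), (U$_C$) to fewnomial maps with $k$ components, the same $N$ monomials and satisfying the same two conditions. Since $|\deg H|=|\deg F|$, this gives $\mathcal D(n,k)\subseteq\mathcal D(k,n)$. Swapping the roles of $n$ and $k$ gives the reverse inclusion.

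First I will recall the construction of \cite{BDF24} in the matrix language of the introduction. From $F$ we have the exponent matrix $A$, of size $(n+1)\times N$, and the coefficient matrix $C$, of size $n\times N$. From these we form the Gale data: $B$, an $N\times k$ matrix whose columns span $\ker A$, and $D$, an $N\times(k+1)$ matrix whose columns span $\ker C$ with last row normalized. The H-dual $H$ exchanges the two roles. Its exponent matrix is built from $D$: the matrix $D^T$, after a change of basis putting the normalized row as the row of ones, is a $(k+1)\times N$ matrix of the form $(1,a'_i)$. Its coefficient matrix is built from $B^T$, which is $k\times N$.

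The key verification is the transfer of the genericity conditions, via the relation between complementary maximal minors of a matrix and its Gale dual.
\begin{itemize}
\item The maximal minors of $B^T$ (for $k$ rows $S$) equal, up to a global sign, the complementary maximal minors of $A$. So (U$_A$) for $F$ is equivalent to (U$_C$) for $H$.
\item In the same way, the $(k+1)$-minors of $D$ are, up to a global sign, the complementary $n$-minors of $C$. So (U$_C$) for $F$ is equivalent to (U$_A$) for $H$.
\end{itemize}
One detail needs care: making the last row of $D$ equal to $(0,\dots,0,1)$ requires some $p_i$ to be constant. If (U$_C$) forces all rows to be nonconstant, I will instead apply an affine change of coordinates on $\R^k$, or prepend the row of ones as in \cite{BDF24}. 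Either way the construction only changes $D$ by an invertible transformation, which does not affect the oriented-matroid data up to a global sign.

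With these correspondences, Theorem~\ref{thm:h-duality} gives $|\deg H|=|\deg F|$. Its proof matches the Newton polytope formula for $F$ term by term with the Gale dual formula for $H$: facets $\Gamma$ of $P$ correspond to complements $S=[N]\setminus\Gamma$, and signs of minors of $C^\Gamma$ and $A^{\Gamma\cup\{i\}}$ correspond to those of $B_S$ and $(D_S;D_{i})$. The step I expect to be the main obstacle is the surjectivity direction, namely that the construction can be run backwards: every $k$-component map $H$ satisfying (U$_A$) and (U$_C$) must arise as the H-dual of some $F$. If it is cleaner, I will avoid this by applying the construction symmetrically to $H$ itself. Since the H-dual of the H-dual recovers $F$ up to changes of basis, which preserve positive zeros and the degree up to sign, the inclusion $\mathcal D(k,n)\subseteq\mathcal D(n,k)$ follows from the same argument with $n$ and $k$ swapped.
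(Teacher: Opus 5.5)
Your overall route is the paper's: build the H-dual $H$ of type $(k,n)$, check via complementary minors that (U$_A$) and (U$_C$) are exchanged, use $\deg H=\pm\deg F$ (Theorem~\ref{thm:h-duality}) to get $\mathcal D(n,k)\subseteq\mathcal D(k,n)$, and rerun the argument with $n$ and $k$ exchanged. As you note at the end, this needs neither surjectivity nor involutivity. However, there is a genuine gap in how you build $H$, and it hides a case you never treat.

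\textbf{The construction of $H$.} In your description, the row space of the exponent matrix of $H$ is the column space $\ker C$ of $D$. That row space contains $(1,\dots,1)$ only when $(1,\dots,1)\in\ker C$, which fails in general, and no change of basis of $\R^{k+1}$ fixes this. Your worry about normalizing $D_N=(0,\dots,0,1)$ is beside the point: that is always possible under (U$_C$), since $D_N\neq0$.

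What the construction actually needs is a point $v_0$ with $\langle D_i,v_0\rangle>0$ for all $i$. One then replaces $D_i$ by $D_i/\langle D_i,v_0\rangle$, which amounts to replacing $C$ by $C\Lambda$ with $\Lambda$ positive diagonal. The positivity of $\Lambda$ is what preserves the signs $\sign\det C^\Gamma$ and the cone conditions. This is exactly what makes the formula of Theorem~\ref{thm:B} for $H$ match the formula of Theorem~\ref{thm:A} for $F$.

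Such a $v_0$ exists if and only if $\ker C$ contains a positive vector, that is, $\Delta\neq\emptyset$. Theorem~\ref{thm:h-duality} is stated only under this hypothesis. So your map $F\mapsto H$ is not defined on all the maps counted by $\mathcal D(n,k)$.

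\textbf{The missing case.} It is easy to handle. If $\Delta=\emptyset$, then $\deg F=0$ by Theorem~\ref{thm:B}. Moreover $0\in\mathcal D(k,n)$: for every $(k,n)$ there are maps satisfying (U$_A$) and (U$_C$) whose coefficient columns lie in an open half-space, and such maps have no positive zero. The paper's proof begins with exactly this observation, and only then applies the H-duality to the maps with $\Delta\neq\emptyset$.
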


\begin{introtheorem}\label{thm:E}
For all $n,k\ge1$,
\[
\mathcal D(n,1)=\mathcal D(1,n)=\{0,1\},\qquad \mathcal D(n,2)=\mathcal D(2,n)=\Big\{0,1,\dots,\Big\lfloor\frac n2\Big\rfloor+1\Big\}.
\]
\end{introtheorem}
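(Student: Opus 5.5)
By Theorem~\ref{thm:D}, it suffices to compute $\mathcal D(n,1)$ and $\mathcal D(n,2)$ for all $n\ge1$. The equalities $\mathcal D(1,n)=\mathcal D(n,1)$ and $\mathcal D(2,n)=\mathcal D(n,2)$ then follow. The plan is to obtain the upper bounds from Theorem~\ref{thm:C} and Proposition~\ref{prop:k2}, and to realize every value in the stated range by explicit constructions. We treat the cases $k=1$ and $k=2$ separately.

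\emph{The case $k=1$.} The inclusion $\mathcal D(n,1)\subset\{0,1\}$ is the last assertion of Theorem~\ref{thm:C}. To realize $0$, choose generic $C$ whose columns all lie in an open half-space; then $F$ has no positive zero, so $\deg F=0$. To realize $1$, it suffices to exhibit a circuit system with exactly one nondegenerate positive zero. For instance, take $\A=\{0,e_1,\dots,e_n,a\}$ with $a$ generic. Choose $C$ so that $F$ is linear-plus-one-monomial with a unique transversal positive solution; concretely, take $f_l=x_l-1+\epsilon_l(x^a-1)$ with small generic perturbations. Alternatively, Theorem~\ref{thm:B} with $k=1$ shows that the degree is the parity of the number of sign terms, so a direct choice of signs gives an odd count.

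\emph{The case $k=2$, upper bound.} Theorem~\ref{thm:C} gives $|\deg F|<\frac12 M(2,N)=\frac N2=\frac{n+3}2$, hence $|\deg F|\le\lfloor (n+2)/2\rfloor=\lfloor n/2\rfloor+1$. Before relying on this, I will double-check that the strict inequality yields exactly this floor for both parities of $n$. If $n$ is even, $\frac{n+3}2$ is a half-integer and the largest integer below it is $\frac n2+1$. If $n$ is odd, $\frac{n+3}2$ is an integer, so the bound gives at most $\frac{n+1}2=\lfloor n/2\rfloor+1$. So the bound is correct in both cases.

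\emph{The case $k=2$, realizing all values.} This is the main obstacle. The idea is to work on the Gale side with Proposition~\ref{prop:k2}, which expresses $\deg F$ as the winding number of the rows $B_i$ around the origin. The winding is read along the boundary of the polygon $\overline\Delta$ with $N=n+3$ lines $p_i=0$. When $\Delta$ is a bounded $N$-gon using all $N$ lines, traversing its edges in cyclic order visits the rows $B_{i}$ in that order. The degree is then the number of times this closed sequence of vectors in $\R^2$ winds around $0$.

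We aim to take the rows of $B$ forming a Gale configuration of $N$ points in general position, with $\sum B_i=0$ and $\sum a_iB_i=0$. The freedom is the order in which edges of $\Delta$ appear, which is governed by $D$, that is, by $C$. Any totally cyclic configuration of $N$ vectors, such as vectors $B_i$ at angles $2\pi i w/N$ with $\gcd(w,N)=1$, is the Gale dual of some generic $\A$. Ordering them along an $N$-gon then gives winding number $w$, for any $w$ with $2w<N$ and $\gcd(w,N)=1$. To cover the remaining values $w\le\lfloor n/2\rfloor+1$ not coprime to $N$, and the value $0$, I would instead use star-polygon sequences of winding $w$ that do not require equal spacing. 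This is possible since only the cyclic order of the angles matters. For example, take $N$ vectors whose angles, read in polygon order, increase monotonically through $w$ full turns, with a total angle $2\pi w$ split into $N$ steps each less than $\pi$; this is possible when $N>2w$. Degree $0$ is realized by taking $\Delta=\emptyset$, or by columns of $C$ in a half-space. Finally, using Theorem~\ref{thm:B}, we verify that the chosen $\Delta$ and $B$ satisfy (U$_A$) and (U$_C$), which holds after small perturbation. Lifting back through Gale duality gives $F$ with $|\deg F|=w$.
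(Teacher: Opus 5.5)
Your overall plan matches the paper's proof. The upper bounds come from Theorem~\ref{thm:C} and the value $0$ from $\Delta=\emptyset$. For $k=2$ you build a Gale-side configuration and read the degree off Proposition~\ref{prop:k2}, and the reduction to $k\le 2$ through Theorem~\ref{thm:D} is legitimate. However, two of your realization steps fail as written.

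\emph{The $k=1$ example.} A small perturbation only controls zeros on compact sets, not zeros coming in from infinity, so the claimed uniqueness is false in general. For $n=1$, $a=2$ and $\epsilon=-\frac1{10}$, you get $-\frac9{10}+x-\frac1{10}x^2$. Its positive roots are $1$ and $9$, with opposite derivative signs, so $\deg F=0$. As $\epsilon\to0^-$ the second root $-(1+\epsilon)/\epsilon$ enters from infinity, and the same happens for any $a>1$ with $\epsilon<0$. Whether your family has degree $\pm1$ depends on the signs of the $\epsilon_l$ and of $a$. Your ``alternatively'' sentence points in the right direction but is not a construction. The quick fix is the paper's: realize $1$ in $\mathcal D(1,n)$ by a univariate polynomial whose extreme coefficients have opposite signs, and transfer with Theorem~\ref{thm:D}. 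Alternatively, fix $D$ and choose $B$ with $B_{i_0}B_{i_1}<0$ at the two endpoints of $\Delta$.

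\emph{The $k=2$ construction.} The rows of $B$ must satisfy $\sum_iB_i=0$, because $A$ has the all-ones first row; being totally cyclic is not enough. Your ``monotone angles'' unit vectors do not sum to zero in general, so as stated they are not a Gale dual of any exponent matrix. This can be repaired. Every step is $<\pi$ and the total turn is $2\pi w\ge2\pi$, so the directions lie in no closed half-plane. Hence there are positive weights $\rho_r$ with $\sum_r\rho_rB_r=0$, and positive rescaling does not change the winding number.

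The detour is unnecessary, though. The equally spaced vectors at angles $2\pi wr/N$ sum to zero for every $1\le w<N$, so coprimality plays no role. A small generic perturbation, projected back onto $\{\sum_iB_i=0\}$, separates repeated or antipodal directions (giving (U$_A$)) while keeping each turning angle near $2\pi w/N<\pi$. This is exactly what the paper does.

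\emph{The polygon $Q$.} With the normalization $D_N=(0,0,1)$ one has $p_N\equiv1$, so $\Delta$ cannot be a bounded $N$-gon cut out by all $N$ lines. What you need is that $Q$ is an $N$-gon with edges in the order $1,\dots,N$. You must also actually produce such a $D$ with any three rows independent. The paper does this with the tangent lines $(-\cos\theta_i,-\sin\theta_i,1)$ to the unit circle, renormalized by some $G\in GL_3(\R)$. (U$_A$) and (U$_C$) then follow from the complementary-minor relation, not from Theorem~\ref{thm:B}.

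\emph{The choice of side.} The paper treats $n=1$ and $n=2$ directly on the Newton side, where only $C$ has to be built: Descartes for $n=1$, and for $n=2$ a convex $N$-gon $\A$ with Corollary~\ref{cor:n2}. It notes that $k=2$ also follows by Theorem~\ref{thm:D}. By reducing in the opposite direction you picked the side where both $B$ and $D$ must be constructed. Reducing to $n\le2$ avoids all the issues above.
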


Thus, for $n=2$ or $k=2$, the bound $\lfloor(N-1)/2\rfloor$ of Theorem~\ref{thm:C} is sharp. Note that the number of positive zeros can be larger: for $n=k=2$ the degree is at most $2$, while El Hilany's system has $7$ positive solutions. For $n,k\ge3$ we do not know the largest degree. The first open case is $n=k=3$, where Theorem~\ref{thm:C} gives $|\deg F|\le4$.

\subsection*{Existence of positive solutions}
Since $\deg F\neq0$ forces a positive zero, our formulas give sign conditions for the existence of positive solutions. These conditions only involve signs of maximal minors, so they hold for all the coefficient matrices with the same oriented matroid.

\begin{introtheorem}\label{thm:F}
Assume (H). Then $\deg F\neq0$, so that $F$ has at least one positive zero, as soon as one of the following sums has an odd number of terms, or has at least one term and all its terms equal:
\begin{enumerate}
\item the sum of Theorem~\ref{thm:Ap}, for some triangulation $T$ and some vector $u$ as there;
\item under (U$_C$), the Viro sum $\sum s(\sigma)$, with the signs $s(\sigma)$ of \cite[Section~5]{BSS}, over the simplices of $T_h$ positively decorated by $C$, for some generic $h\in\R^N$;
\item under (U$_C$) and if $\Delta\neq\emptyset$, the sum of Theorem~\ref{thm:B}, for some vector $e$ as there.
\end{enumerate}
If the terms of the sum are all equal and the positive zeros are nondegenerate, their number is at least the number of terms.
\end{introtheorem}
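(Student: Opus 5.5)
The plan is to derive Theorem~\ref{thm:F} from the three formulas for $\deg F$ together with the general properties of the Brouwer degree under (H), which are stated earlier. First I recall from Proposition~\ref{prop:degree} that under (H) the degree is defined. Moreover $\deg F\neq0$ forces a positive zero, since a map with no zeros in the relevant compact region has degree zero.

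Each of the three sums in the statement is a sum of terms in $\{\pm1\}$ equal to $\deg F$:
\begin{itemize}
\item the sum in (1) by Theorem~\ref{thm:Ap}, valid under (H) alone;
\item the Viro sum in (2) by Proposition~\ref{prop:viro-pl}, which gives $\deg F=\sum s(\sigma)$ under (U$_C$) and (H) for generic heights $h$ (genericity so that $T_h$ is a triangulation);
\item the sum in (3) by Theorem~\ref{thm:Bp}, valid under (U$_C$) and (H), with $\Delta\neq\emptyset$ so that the formula is not the trivial case $\deg F=0$.
\end{itemize}
For (3) there is a global sign $\pm$, which does not affect whether the sum vanishes.

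Now the elementary observation. A sum of $m$ terms each equal to $\pm1$ is congruent to $m$ modulo $2$. If $m$ is odd, the sum is odd and hence nonzero. If $m\ge1$ and all terms are equal, the sum is $\pm m\neq0$. In either case $\deg F\neq0$, so $F$ has a positive zero.

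For the final assertion, suppose the terms are all equal, so that $|\deg F|$ equals the number of terms $m$. Assume also the positive zeros are nondegenerate. Under (H) the zero set is compact, because no truncation to a proper face has a positive zero, so zeros cannot accumulate at $\partial P$ in the moment map picture. Nondegenerate zeros are isolated, so there are finitely many. Then $\deg F=\sum_{F(x)=0}\sign\det DF(x)$, and the number of zeros is at least $|\sum \sign|=m$.

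The only point needing care is checking that the hypotheses of each cited formula match the case at hand: the genericity of $h$ in (2), $u$ avoiding the cones of $n-1$ columns of $C$ in (1), and $e$ avoiding the cones of $k-1$ rows $-B_s$ in (3). Once these are verified, the rest is parity bookkeeping.
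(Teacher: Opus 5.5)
Your proposal is correct and follows the paper's proof. Each sum is $\pm\deg F$ by Theorem~\ref{thm:Ap}, Proposition~\ref{prop:viro-pl} (equivalently Theorem~\ref{thm:viro}(2)) and Theorem~\ref{thm:Bp}; a sum of signs that has an odd number of terms, or has at least one term and all terms equal, is nonzero; and Proposition~\ref{prop:degree}(2) and~(1) then give the positive zero and the lower bound on the number of nondegenerate zeros.
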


The odd case is a parity criterion (Corollary~\ref{cor:odd}): if a sum has an odd number of terms for one choice, then it has an odd number of terms for every choice. The equal-sign case also applies to maps of even degree. For instance, consider
\[
F(x,y)=\big(3x-3x^3+x^4y^2+x^2y^3-3y^2,\ 2x^3-3x^4y^2+3x^2y^3-2y^2\big).
\]
Its Newton polygon is a pentagon, and its columns $(3,0)$, $(-3,2)$, $(1,-3)$, $(1,3)$, $(-3,-2)$ at the consecutive vertices wind twice around the origin. Every vector $u$ selects two edges with the same sign, so $\deg F=2$ and $F$ has at least two positive zeros (Example~\ref{ex:deg2}). We use this map as a running example, on which we compute all our formulas (Examples~\ref{ex:run}, \ref{ex:run-A}, \ref{ex:deg2}, \ref{ex:run-gale} and the examples following Theorems~\ref{thm:descartes} and~\ref{thm:h-duality}). The criteria of Bihan, Dickenstein and Giaroli \cite{BDG} rely on a different principle. They show that the Gale dual map points inwards along the boundary of the polyhedron $\Delta$, so that its degree is $\pm1$. In particular, they cannot apply to this example.

\subsection*{Previous work}
Most results on the positive zeros of fewnomial systems are upper bounds \cite{Kh,BS,BD,BDF,PartI}. Soprunova and Sottile \cite{SoSo,SoSo2} obtain lower bounds for the number of real solutions as the degree of a projection of a real toric variety. For their Wronski systems, whose coefficient matrices are weighted colouring matrices of a balanced triangulation, they compute this degree, as well as the degree on the positive orthant \cite[Corollary~3.10]{SoSo}, as the signature of the triangulation; this is the special case of Theorem~\ref{thm:viro} for such matrices. Bihan, Santos and Spaenlehauer \cite{BSS} count the positive solutions of Viro systems with positively decorated simplices, and they introduce the signs $s(\sigma)$ \cite[Section~5]{BSS}. Theorem~\ref{thm:viro} shows that the signed count $\sum s(\sigma)$ is the degree of $F$, and thus does not depend on the Viro deformation. Bihan, Dickenstein and Giaroli \cite{BDG} combine degree theory and Gale duality to guarantee the existence of positive solutions. Finally, in reaction network theory, the Brouwer degree on the positive orthant is used to count equilibria \cite{CHW} and to detect multistationarity \cite{CFMW}.

\subsection*{Organization}
In Section~\ref{sec:degree} we define the degree, compute it on the Newton polytope and prove the vertex reduction. Section~\ref{sec:viro} describes the positive zeros of Viro systems and proves the Viro formula. In Section~\ref{sec:newton} we prove Theorems~\ref{thm:A}, \ref{thm:Ap} and~\ref{thm:comb}, the oriented matroid invariance and the half signed count of Proposition~\ref{prop:ends-newton}, which we interpret in terms of the curve defined by the first $n-1$ components of $F$. We also treat the cases $n\le2$ there, and give a piecewise linear proof of the Viro formula. Section~\ref{sec:coning} interprets Theorem~\ref{thm:A} as a Viro formula. In Section~\ref{sec:gale} we prove Theorems~\ref{thm:B}, \ref{thm:Bp} and~\ref{thm:F} and the half signed count of Proposition~\ref{prop:ends-gale}, we describe the Viro deformation and the vertex reduction on the Gale side, and we treat the cases $k\le2$. Section~\ref{sec:descartes} contains the Descartes rule of signs (Theorem~\ref{thm:descartes-intro}), Section~\ref{sec:bound} the proofs of Theorems~\ref{thm:C} and~\ref{thm:E}, and Section~\ref{sec:h} the H-duality and the proof of Theorem~\ref{thm:D}.

\subsection*{Genesis of the paper and use of AI}
The main idea of this paper comes from \cite{PartI}, where a bound for the topological degree was obtained on the Gale side. The formulas for the degree obtained through the compactified map $\hat F$ on the boundary of the Newton polytope were found with the help of an AI assistant. The Viro formula and its links with previous works, notably with the positively decorated simplices of Bihan, Santos and Spaenlehauer \cite{BSS}, were obtained by the author, as well as the link with the H-duality of Bihan, Dickenstein and Forsg\aa rd \cite{BDF24}, which gives the symmetry of the degree with respect to $n$ and $k$. The AI assistant also helped in several technical proofs, in the verification of the examples, and in the writing of the paper. The AI assistant used was Claude Opus 5.5 (Anthropic). The author has checked all the mathematical content of the paper, and takes full responsibility for it.

\subsection*{Data availability}
Data sharing is not applicable to this article, as no datasets were generated or analysed during the current study.

\subsection*{Conflict of interest}
The author has no conflicts of interest to declare that are relevant to the content of this article.

\section{The degree}\label{sec:degree}

In this section $\A$ is any finite set which affinely spans $\R^n$, and $C$ is any real $n\times N$ matrix. The genericity assumptions (U$_A$) and (U$_C$) are only used at the end. We define the degree of $F$, we show how to compute it on the Newton polytope $P=\conv(\A)$ through the moment map, and we prove the vertex reduction.

Recall that if $U\subset\R^n$ is open and $G\colon U\to\R^n$ is continuous with a compact zero set $G^{-1}(0)$, then the degree $\deg(G,U,0)$ is the Brouwer degree of $G$ with respect to $0$ on any bounded open set $V$ such that $G^{-1}(0)\subset V\subset\overline V\subset U$. By the excision property, it does not depend on $V$ (see for instance \cite{Lloyd}). If $G$ is $C^1$ and all its zeros are nondegenerate, then they are finitely many and $\deg(G,U,0)=\sum_{G(x)=0}\sign\det DG(x)$. Moreover, the degree does not change under continuous deformations $G_s$, $s\in[0,1]$, whose zeros remain in a fixed compact subset of $U$.

\begin{definition}\label{def:deg}
Assume that the positive zeros of $F$ form a compact subset of $\R^n_{>0}$. The \emph{degree} of the fewnomial map $F$ is
\[
\deg F=\deg\big(F,\R^n_{>0},0\big).
\]
\end{definition}

\subsection{The moment map and the compactified map}
For $x\in\R^n_{>0}$ we put
\[
\pi_i(x)=\frac{x^{a_i}}{\sum_{j=1}^Nx^{a_j}}\quad(i\in[N]),\qquad\mu(x)=\sum_{i=1}^N\pi_i(x)\,a_i .
\]
The weights $\pi_i(x)$ are positive with sum $1$. Thus $\pi(x)=(\pi_i(x))_i$ lies in the open simplex $\Delta_{N-1}^\circ$, and $\mu(x)$ lies in the interior of $P$. For a face $G$ of $P$, we identify $G$ with the set of indices $i$ such that $a_i\in G$. We write $F_G(x)=\sum_{i\in G}C_ix^{a_i}$ for the truncation of $F$ to $G$, and $\pi^G$, $\mu_G$ for the weights and the moment map of the point configuration $\{a_i\}_{i\in G}$, which are defined in the same way.

\begin{proposition}\label{prop:moment}
\begin{enumerate}
\item The moment map $\mu$ is an orientation preserving diffeomorphism from $\R^n_{>0}$ onto the interior of $P$.
\item There is a unique continuous map $\hat\pi\colon P\to\Delta_{N-1}$ such that $\hat\pi(\mu(x))=\pi(x)$ for $x\in\R^n_{>0}$. It satisfies $\sum_i\hat\pi_i(q)a_i=q$ for every $q\in P$, and for $q$ in the relative interior of a proper face $G$ we have $\hat\pi_i(q)=0$ for $i\notin G$ and $(\hat\pi_i(q))_{i\in G}=\pi^G(y)$ for every $y\in\R^n_{>0}$ with $\mu_G(y)=q$.
\end{enumerate}
\end{proposition}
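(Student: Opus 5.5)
For (1), I will work in logarithmic coordinates $x=e^t$, $t\in\R^n$. Put $\phi(t)=\log\sum_j e^{\langle a_j,t\rangle}$. Then $\mu(e^t)=\nabla\phi(t)$. The Hessian of $\phi$ is the covariance matrix $\sum_i\pi_i(a_i-\mu)(a_i-\mu)^T$. It is positive definite, because $\A$ affinely spans $\R^n$: if $v^T(\text{Hess})v=0$, then $\langle v,a_i-\mu\rangle=0$ for all $i$, which forces $v=0$. Hence $t\mapsto\nabla\phi(t)$ is a local diffeomorphism with positive Jacobian, and it is injective by strict convexity of $\phi$. The map $x\mapsto t=\log x$ also has positive Jacobian (it is diagonal with entries $1/x_l>0$), so the composite $\mu$ preserves orientation.

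For surjectivity onto $\mathrm{int}\,P$, I will use the standard convex-duality argument. Fix $q\in\mathrm{int}\,P$ and minimize $\phi(t)-\langle q,t\rangle$. This function is coercive: along any direction $v\neq0$, the quantity $\max_i\langle a_i,v\rangle$ exceeds $\langle q,v\rangle$ because $q$ is interior, so the function tends to $+\infty$ linearly. The minimum is therefore attained, and at the minimizer $\nabla\phi(t)=q$. This part is routine.

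For (2), uniqueness is immediate from the density of $\mathrm{int}\,P$ in $P$. I will define $\hat\pi$ on $\mathrm{int}\,P$ by $\pi\circ\mu^{-1}$. On the relative interior of a proper face $G$, I will define it by $\pi^G\circ\mu_G^{-1}$, extended by zeros. This requires applying (1) to the configuration $\{a_i\}_{i\in G}$ inside its affine span. Here $\mu_G$ factors through a quotient torus, so $\pi^G(y)$ depends only on $\mu_G(y)$; this is the claimed independence of $y$. The identity $\sum_i\hat\pi_i(q)a_i=q$ then holds by construction on each face.

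The main obstacle is continuity at boundary points. Take $q_m\to q$ with $q$ in the relative interior of $G$, and write $q_m=\mu(e^{t_m})$ (or the analogue on a face containing $G$). I will try to show $\pi(e^{t_m})\to\hat\pi(q)$ by compactness. Every limit point $w$ of $\pi(e^{t_m})$ in $\Delta_{N-1}$ satisfies $\sum_i w_ia_i=q$. Since $q$ lies in the relative interior of $G$, its only convex representations are supported on $G$, so $\mathrm{supp}\,w\subset G$.

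It remains to identify $w$ with $\pi^G(\mu_G^{-1}(q))$. For this I will use that the ratios $\pi_i/\pi_j=x^{a_i-a_j}$ satisfy the binomial relations $\prod_i\pi_i^{\lambda_i}=1$ for every affine relation $\lambda$ among the $a_i$. Passing to the limit, $w$ satisfies the same relations for the affine relations supported on $G$. I then need that $w$ has full support on $G$, i.e. no coordinate $w_i$ with $i\in G$ vanishes; I expect this to be the delicate point. I would handle it by Birch's theorem (uniqueness of a point of the toric variety with prescribed moment image). Alternatively, I would induct on faces, reducing to a limit point lying on a smaller face $G'\subsetneq G$; then $\sum_i w_ia_i$ would lie in $G'$, contradicting that $q$ is in the relative interior of $G$. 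Once $w$ has full support on $G$ and satisfies the toric relations, Birch uniqueness, i.e. (1) applied to $G$, gives $w=\pi^G(\mu_G^{-1}(q))$, which proves continuity.
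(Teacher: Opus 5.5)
Your part (1) and the face-by-face construction of $\hat\pi$ match the paper. The gap is the step you yourself flag: showing that a limit point $w$ of $\pi(x_m)$ has full support on $G$. Neither of your two fixes works.

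The paper notes that the proposition is known as Birch's theorem. The version of Birch's theorem that covers points of the closure of the toric variety \emph{is} assertion (2), so invoking it is circular. The version on the open torus is just (1) for $G$, and it says nothing about a $w$ with vanishing coordinates.

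The induction on faces rests on the claim that if $\mathrm{supp}\,w\subsetneq G$, then $w$ is supported on a proper face of $G$. This is false as soon as $G$ contains sets of points of $\A$ that are not faces. For instance, let $G$ be an edge containing exactly three points of $\A$: its endpoints $a_1,a_3$, and $a_2=\alpha a_1+\beta a_3$ with $\alpha,\beta>0$. A $w$ supported on $\{1,3\}$ can have any point of the relative interior of $G$ as its barycenter, so $\sum_iw_ia_i=q$ gives no contradiction. What excludes such a $w$ is the toric relation $\pi_2=\pi_1^\alpha\pi_3^\beta$, not the barycenter. Your argument only works when every proper face is a simplex whose only points in $\A$ are its vertices, e.g.\ under (U$_A$). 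The proposition, however, is stated and later used for arbitrary $\A$.

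Your own binomial relations close the gap. Since $q\in\mathrm{relint}\,G$, write $q=\sum_{i\in G}c_ia_i$ with $\sum_ic_i=1$ and all $c_i>0$. Then $\lambda=c-w$ is an affine relation with $\lambda_j=c_j>0$ for every $j\in G\setminus\mathrm{supp}\,w$, and all its negative entries lie in $\mathrm{supp}\,w$. If such a $j$ existed, the limit relation $\prod_{\lambda_i>0}w_i^{\lambda_i}=\prod_{\lambda_i<0}w_i^{-\lambda_i}$ would have left side $0$ (it contains $w_j^{c_j}$) and right side positive.

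The paper avoids identifying limit points altogether. First it shows that the weights $\pi_i(x_m)$, $i\notin G$, tend to $0$, using $\lambda_\Gamma(q_m)\ge\kappa\sum_{i\notin\Gamma}\pi_i(x_m)$ for the facets $\Gamma\supset G$; your compactness argument gives this too. The normalized weights on $G$ are then $\pi^G(x_m)$, and their barycenter $\mu_G(x_m)$ tends to $q$. By (1) applied to $G$, the map $\mu_G$ is a homeomorphism from the quotient torus onto $\mathrm{relint}\,G$, so its inverse is continuous. Hence $\pi^G(x_m)\to\pi^G(y)$ with $\mu_G(y)=q$, and positivity of the limit on $G$ comes for free.
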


This is a classical result. The closure of $\pi(\R^n_{>0})$ in the simplex is the positive part of the projective toric variety of $\A$, and $(p_i)\mapsto\sum_ip_ia_i$ is a homeomorphism from it onto $P$ (see \cite[Section~4.2]{Fulton-toric} for integer exponents and \cite{Sottile-moment} for the real points). In statistics, it is known as Birch's theorem. We include a short proof, since we allow real exponents.

\begin{proof}
1. In the coordinates $w=\log x$, $\mu$ is the gradient of the function $\ell(w)=\log\sum_ie^{\langle a_i,w\rangle}$. The Hessian of $\ell$ is the covariance matrix of the points $a_i$ with the weights $\pi_i$, which is positive definite since $\A$ affinely spans $\R^n$. Thus $\ell$ is strictly convex, and $\mu$ is an injective local diffeomorphism which preserves orientations. Its image is open and convex, and it is contained in the interior of $P$. Let us show that it is the whole interior. Let $q$ be in the interior of $P$. Since $\max_i\langle a_i,w\rangle-\langle q,w\rangle\ge c|w|$ with $c>0$, the strictly convex function $\ell(w)-\langle q,w\rangle$ tends to $+\infty$ at infinity. Hence it attains its minimum, at a point $w$ such that $\mu(e^w)=q$.

2. For a facet $\Gamma$ of $P$, let $\lambda_\Gamma$ be the affine function which vanishes on $\Gamma$ and is positive on $\A\setminus\Gamma$, and put $\kappa=\min_\Gamma\min_{i\notin\Gamma}\lambda_\Gamma(a_i)>0$. For $q=\mu(x)$ we have $\lambda_\Gamma(q)=\sum_{i\notin\Gamma}\pi_i(x)\lambda_\Gamma(a_i)\ge\kappa\sum_{i\notin\Gamma}\pi_i(x)$. Now let $q_m=\mu(x_m)$ tend to a point $q$ of the relative interior of a proper face $G$. Since $G$ is the intersection of the facets which contain it, the weights $\pi_i(x_m)$ with $i\notin G$ tend to $0$. The normalized weights $\big(\pi_i(x_m)/\sum_{j\in G}\pi_j(x_m)\big)_{i\in G}$ are equal to $\pi^G(x_m)$, and their barycenter $\mu_G(x_m)$ tends to $q$. Applying~1 to the configuration $\{a_i\}_{i\in G}$ in its affine span, we see that $\mu_G$ is a homeomorphism from the quotient of $\R^n_{>0}$ by the directions orthogonal to $G$ onto the relative interior of $G$. Moreover, $\pi^G$ factors through this quotient. Therefore $\pi^G(x_m)$ tends to $\pi^G(y)$, where $\mu_G(y)=q$. This defines $\hat\pi$ on $P$, with the stated properties. The same argument, applied to sequences in the relative interiors of the faces and combined with an induction on the dimension of the faces, shows that $\hat\pi$ is continuous. Uniqueness follows from the density of the interior.
\end{proof}

We define the \emph{compactified map}
\[
\hat F\colon P\to\R^n,\qquad\hat F(q)=\sum_{i=1}^N\hat\pi_i(q)\,C_i .
\]
It is continuous, and Proposition~\ref{prop:moment} gives the following two properties:
\begin{itemize}
\item on the interior of $P$, $\hat F(\mu(x))=F(x)/\sum_jx^{a_j}$, so the zeros of $\hat F$ in the interior of $P$ are the images under $\mu$ of the positive zeros of $F$;
\item on the relative interior of a proper face $G$, $\hat F(\mu_G(y))=F_G(y)/\sum_{j\in G}y^{a_j}$, so $\hat F$ vanishes on the relative interior of $G$ if and only if the truncation $F_G$ has a zero in $\R^n_{>0}$.
\end{itemize}

\subsection{The degree at infinity}
We use the compactified map $\hat F$ to compute $\deg F$, and to control its behaviour when the coefficients vary. We say that $F$ is \emph{nondegenerate at infinity} if the following condition holds:
\begin{itemize}
\item[(N)] for every proper face $G$ of $P$, the truncation $F_G$ has no zero in $\R^n_{>0}$.
\end{itemize}
Equivalently, $\hat F$ does not vanish on $\partial P$.

\begin{proposition}\label{prop:degree}
Assume (N).
\begin{enumerate}
\item The positive zeros of $F$ form a compact subset of $\R^n_{>0}$, so $\deg F$ is defined, and $\deg F=\deg(\hat F,P^\circ,0)$, where $P^\circ$ is the interior of $P$. Consequently, $\deg F$ is the degree of the map $\hat F/|\hat F|\colon\partial P\to S^{n-1}$, where $\partial P$ is oriented as the boundary of $P$. If the positive zeros are all nondegenerate, then $\deg F=\sum_{F(x)=0}\sign\det DF(x)$; in particular the number of positive zeros is at least $|\deg F|$, and congruent to $\deg F$ modulo $2$.
\item If $\deg F\neq0$, then $F$ has at least one positive zero.
\item The degree is constant along any continuous family of coefficient matrices, parametrized by a connected space, for which (N) holds.
\end{enumerate}
\end{proposition}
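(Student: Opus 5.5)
The plan is to transport everything to $P$ through the moment map and to use the continuity of $\hat F$ given by Proposition~\ref{prop:moment}.

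\emph{Part 1.} Under (N), $\hat F$ is continuous on the compact set $P$ and does not vanish on $\partial P$. Hence $\hat F^{-1}(0)$ is a closed subset of $P$ contained in $P^\circ$, so it is compact. By Proposition~\ref{prop:moment}(1), $\mu$ is a homeomorphism $\R^n_{>0}\to P^\circ$, so the set of positive zeros of $F$, which is $\mu^{-1}(\hat F^{-1}(0))$, is compact. Next write $F=\rho\cdot(\hat F\circ\mu)$ with $\rho(x)=\sum_jx^{a_j}>0$. The homotopy $(1-s+s\rho)(\hat F\circ\mu)$, $s\in[0,1]$, has the same zeros for every $s$, so $\deg(F,\R^n_{>0},0)=\deg(\hat F\circ\mu,\R^n_{>0},0)$. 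Because $\mu$ is an orientation-preserving diffeomorphism, the multiplicativity (change of variables) property of the degree then gives $\deg(\hat F\circ\mu,\R^n_{>0},0)=\deg(\hat F,P^\circ,0)$.

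Now choose a bounded open $V$ with $\hat F^{-1}(0)\subset V\subset\overline V\subset P^\circ$, for instance $V=\{q:\ d(q,\partial P)>\delta\}$ with $\delta$ small. On the collar $P\setminus V$ the map $\hat F$ has no zero. The standard fact that the Brouwer degree on a domain equals the degree of $G/|G|$ on its boundary therefore gives that $\deg F$ is the degree of $\hat F/|\hat F|$ on $\partial V$. This equals its degree on $\partial P$, via the radial homotopy between $\partial V$ and $\partial P$ inside the zero-free collar, say by pulling back through a homeomorphism $P\cong$ ball centered at an interior point. When all zeros are nondegenerate, the formula with signs of $\det DF$ is the $C^1$ case recalled above. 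The bound $\#\{\text{zeros}\}\ge|\deg F|$ and the congruence modulo $2$ follow because each term of the sum is $\pm1$.

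\emph{Part 2} is the solution property of the degree. If $F$ had no positive zero, we could take $V=\emptyset$ and obtain $\deg F=0$.

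\emph{Part 3.} Let $C(t)$ be the family, and $\hat F_t$ the corresponding compactified maps. Note that $\hat\pi$ does not depend on $C$, so $(t,q)\mapsto\hat F_t(q)=\sum_i\hat\pi_i(q)C_i(t)$ is jointly continuous. Fix $t_0$. By compactness of $\partial P$, $|\hat F_{t_0}|\ge\eta>0$ on $\partial P$. It stays nonvanishing on $\partial P$ for $t$ near $t_0$; in fact (N) holds for all $t$ by assumption. The maps $\hat F_t/|\hat F_t|$ on $\partial P$ then form a continuous homotopy, so the degree is locally constant in $t$, and hence constant on the connected parameter space.

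The only delicate step is the identification of the degree on $P^\circ$ with the boundary degree, that is, making the collar argument rigorous on a polytope. I would handle it by radially identifying $P$ with a closed ball around an interior point.
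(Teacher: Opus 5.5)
Your proposal is correct and follows the paper's proof essentially step by step. You obtain compactness from $\mu^{-1}(\hat F^{-1}(0))$, use the zero-preserving homotopy between $F$ and $\hat F\circ\mu=F/\rho$, and transport by the orientation-preserving diffeomorphism $\mu$. You then pass to the boundary degree via a radial identification of $P$ with a ball, and for part~3 you use the fact that $\hat\pi$ is independent of $C$. Your collar argument and your explicit ``locally constant, hence constant on a connected space'' step are just slightly more detailed versions of the paper's appeal to Kronecker--Poincar\'e--Hopf and to homotopy invariance.
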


\begin{proof}
1. The zeros of $\hat F$ form a closed subset of $P$ which does not meet $\partial P$, hence a compact subset of the interior of $P$. Recall that $\mu$ is a homeomorphism from $\R^n_{>0}$ onto the interior of $P$, and that $\hat F(\mu(x))=F(x)/\rho(x)$ with $\rho(x)=\sum_jx^{a_j}>0$. Therefore the positive zeros of $F$ form a compact set $Z=\mu^{-1}(\hat F^{-1}(0))$. Let $V$ be a bounded open set such that $Z\subset V\subset\overline V\subset\R^n_{>0}$. The maps $F/((1-s)+s\rho)$, $s\in[0,1]$, have the same zeros as $F$, so $\deg(F,V,0)=\deg(F/\rho,V,0)$. Since $F/\rho=\hat F\circ\mu$ and $\mu$ is an orientation preserving diffeomorphism (Proposition~\ref{prop:moment}), we get $\deg(F/\rho,V,0)=\deg(\hat F,\mu(V),0)$, and by excision this is $\deg(\hat F,P^\circ,0)$. Finally, $P$ is homeomorphic to a closed ball by a radial homeomorphism preserving orientations. Thus, by the Kronecker--Poincar\'e--Hopf index theorem (see \cite[\S6]{Milnor} or \cite{Lloyd}) applied to $\hat F$ on $P$, $\deg(\hat F,P^\circ,0)$ is the degree of $\hat F/|\hat F|\colon\partial P\to S^{n-1}$. The statement on nondegenerate zeros is the usual computation of the degree at a regular value. The consequences follow, since a sum of $m$ signs is at most $m$ in absolute value and congruent to $m$ modulo $2$.

2. This is a standard property of the degree.

3. The map $\hat\pi$ does not depend on $C$, so $\hat F$ depends continuously on $C$, uniformly on $P$. Along the family, $\hat F$ does not vanish on $\partial P$. Hence the maps $\hat F/|\hat F|$ on $\partial P$ form a homotopy, and their degree is constant. We conclude by~1.
\end{proof}

Observe that under (N), $\deg F$ only depends on the truncations of $F$ to the proper faces of $P$. In other words, the coefficients of the monomials $x^{a_i}$ with $a_i$ in the interior of $P$ can be changed arbitrarily without changing the degree. Under (H), the degree even only depends on the columns indexed by the vertices of $P$ (Proposition~\ref{lem:vertex}).

\begin{lemma}\label{lem:N}
\begin{enumerate}
\item If every proper face $G$ of $P$ is a simplex whose only points in $\A$ are its vertices, then (N) holds if and only if, for every proper face $G$, $\ker C^G$ contains no vector with positive coordinates. This is the case under (U$_A$).
\item Under (U$_A$) and (U$_C$), (N) holds, and it also holds for $C\,\mathrm{diag}(\lambda)$ for every $\lambda\in\R^N_{>0}$.
\item Consider the condition
\begin{itemize}
\item[(H)] for every proper face $G$ of $P$, the columns $C_i$, $i\in G$, lie in an open half-space of $\R^n$; equivalently, by Gordan's theorem, $\ker C^G$ contains no nonzero vector with nonnegative coordinates.
\end{itemize}
Then (H) implies (N), and it is preserved by the rescalings $C\mapsto C\,\mathrm{diag}(\lambda)$, $\lambda\in\R^N_{>0}$. If every proper face of $P$ is a simplex whose only points in $\A$ are its vertices, (H) is equivalent to (N).
\end{enumerate}
\end{lemma}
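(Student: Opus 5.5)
The plan is to reduce everything to a single description of the zeros of the truncations. Fix a proper face $G$ of $P$. The truncation $F_G$ has a positive zero $y$ exactly when $\sum_{i\in G}y^{a_i}C_i=0$. This is a vector of $\ker C^G$ with positive coordinates, of the special monomial form $(y^{a_i})_{i\in G}$.

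For part~1, assume every proper face $G$ is a simplex whose only points in $\A$ are its vertices. Then the points $a_i$, $i\in G$, are affinely independent. I will show that every positive vector $\lambda\in\R^G_{>0}$ has the form $(t\,y^{a_i})_{i\in G}$ for some $t>0$ and $y\in\R^n_{>0}$. In the logarithmic coordinates $w=\log y$, this amounts to solving $\log\lambda_i=s+\langle a_i,w\rangle$ for $i\in G$. These are affine conditions indexed by affinely independent points, so they are solvable.

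Since $\ker C^G$ is a cone, $F_G$ has a positive zero if and only if $\ker C^G$ contains a positive vector. This gives the equivalence. Under (U$_A$), a proper face $G$ contains at most $n$ points of $\A$, and these points are affinely independent. So $\conv(G)$ is a simplex with no other points of $\A$, and the hypothesis of part~1 holds.

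For part~2, under (U$_A$) every proper face has at most $n$ indices. By (U$_C$), any $n$ columns of $C$ are linearly independent, so $\ker C^G=0$. Part~1 then gives (N). Rescaling columns by $\lambda>0$ preserves (U$_C$), so (N) also holds for $C\,\mathrm{diag}(\lambda)$.

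For part~3, first check that (H) implies (N). A positive zero $y$ of $F_G$ produces the nonzero nonnegative vector $(y^{a_i})_{i\in G}$ in $\ker C^G$. This contradicts the Gordan form of (H).

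Next, (H) is preserved by rescaling, since a nonnegative kernel vector $v$ of $C^G\mathrm{diag}(\lambda)$ gives the nonnegative kernel vector $\lambda v$ of $C^G$.

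Finally, assume every proper face is a simplex with only its vertices in $\A$. I need to show that (N) implies (H). Suppose $v\ge0$, $v\ne0$, lies in $\ker C^G$. Let $G'=\{i\in G: v_i>0\}$. The set $G'$ spans a face of the simplex $G$, hence a proper face of $P$, and it again satisfies the hypothesis of part~1. Part~1 applied to $G'$ produces a positive zero of $F_{G'}$, which contradicts (N).

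The only step that needs care is this last one: passing from a nonnegative kernel vector to a positive one on a smaller face. It works because faces of simplicial faces are faces of $P$. The rest is routine linear algebra.
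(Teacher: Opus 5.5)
Your proof is correct and follows the paper's argument essentially step by step. It uses solvability of the log-linear system for affinely independent points in part 1, $\ker C^G=0$ from (U$_C$) in part 2, and restriction to the face $G'$ spanned by the support of a nonnegative kernel vector for (N)$\Rightarrow$(H). The only cosmetic difference is that you prove (H)$\Rightarrow$(N) and the rescaling invariance through the Gordan (kernel) form of (H), whereas the paper argues directly with the half-space $\langle w,C_i\rangle>0$.
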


\begin{proof}
1. Under (U$_A$), the points of a proper face lie on an affine hyperplane. Hence there are at most $n$ of them, and they are affinely independent, so every proper face is a simplex whose only points in $\A$ are its vertices. Let $G$ be such a face. Up to the directions orthogonal to $G$, the map $y\mapsto(y^{a_i})_{i\in G}$ from $\R^n_{>0}$ reaches every positive vector up to a positive factor. Therefore $F_G$ has a positive zero if and only if $\ker C^G$ contains a positive vector.

2. By (U$_C$), the at most $n$ columns $C_i$, $i\in G$, are linearly independent, so $\ker C^G=0$. Multiplying the columns by positive numbers does not change this.

3. If $\langle w,C_i\rangle>0$ for $i\in G$, then $\langle w,F_G(y)\rangle>0$ for $y\in\R^n_{>0}$. Thus (H) implies (N). Moreover, positive rescalings of the columns preserve the half-space. In the simplicial case, a nonzero nonnegative vector of $\ker C^G$ is a positive vector of $\ker C^{G'}$, where $G'$ is the face of $G$ spanned by the points in its support. Hence (N) implies (H) by~1.
\end{proof}

\begin{proposition}\label{lem:vertex}
Assume (H). Let $V\subset[N]$ be the set of indices $i$ such that $a_i$ is a vertex of $P$, and $F_V(x)=\sum_{i\in V}C_ix^{a_i}$. Then $F_V$ satisfies (H), and $\deg F=\deg F_V$.
\end{proposition}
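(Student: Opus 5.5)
Because the vertices of $P$ are points of $\A$, the configuration $\{a_i\}_{i\in V}$ has the same Newton polytope $P$ and the same faces. A face $G$ of $P$ is sent to $G\cap V$, the set of vertices of $G$. So (H) for $F_V$ asks that, for every proper face $G$, the columns $C_i$ with $i\in G\cap V$ lie in an open half-space. This follows from (H) for $F$, since $G\cap V\subset G$ and a subfamily of vectors lying in an open half-space still lies in it. Hence $F_V$ satisfies (H), and by Lemma~\ref{lem:N}(3) it satisfies (N), so both degrees are defined.

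For the equality of degrees, I will deform $F$ into $F_V$ by shrinking the non-vertex columns. For $s\in[0,1]$ put $C(s)_i=C_i$ for $i\in V$ and $C(s)_i=sC_i$ for $i\notin V$. For $s>0$ this is $C\,\mathrm{diag}(\lambda)$ with $\lambda$ positive, so (H) and hence (N) hold by Lemma~\ref{lem:N}(3), and Proposition~\ref{prop:degree}(3) gives $\deg F_s=\deg F$ for $s\in(0,1]$.

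The difficulty is the endpoint $s=0$. The map $F_0$ has the same monomial values as $F$ but zero coefficients off $V$, and its compactified map is computed with the weights $\hat\pi$ of the full configuration $\A$. So it is not literally $\hat F_V$. I handle this in two steps.

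\textbf{Step 1 (the family is continuous up to $s=0$).} The compactified maps $\hat F_s=\sum_i\hat\pi_i C(s)_i$ depend continuously on $s\in[0,1]$ uniformly on $P$. On a point $q$ of the relative interior of a proper face $G$, $\hat F_0(q)=\sum_{i\in G\cap V}\hat\pi_i(q)C_i$. Take $w$ with $\langle w,C_i\rangle>0$ for $i\in G\cap V$. This pairing is positive as soon as some weight $\hat\pi_i(q)$ with $i\in G\cap V$ is positive, and that holds because the weights $\pi^G(y)$ on the relative interior are all positive. So $\hat F_s$ has no zero on $\partial P$ for all $s\in[0,1]$, and the homotopy argument of Proposition~\ref{prop:degree} gives $\deg(\hat F_0,P^\circ,0)=\deg F$.

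\textbf{Step 2 (identify $\hat F_0$ with $F_V$).} On the interior, $\hat F_0(\mu(x))=F_V(x)/\sum_j x^{a_j}$. Following the proof of Proposition~\ref{prop:degree}(1), I rescale by the positive function $\sum_{j\in V}x^{a_j}/\sum_j x^{a_j}$, which does not change zeros. This gives $\deg(\hat F_0,P^\circ,0)=\deg(F_V,\R^n_{>0},0)$, since the zero set is compact and both $\mu$ and $\mu_V$ are orientation preserving diffeomorphisms onto $P^\circ$. Composing with $\mu$ directly even avoids the passage through $\mu_V$: $\deg(\hat F_0\circ\mu)=\deg(F_V/\rho)=\deg F_V$.

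The main obstacle is making sure nothing degenerates at $s=0$, i.e.\ that $\hat F_0$ has no boundary zeros. This is exactly the strict positivity of the face weights combined with (H) restricted to the vertex columns of each face.
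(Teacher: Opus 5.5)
Your proof is correct and follows the paper's argument: the same homotopy $F_s=F_V+s\sum_{i\notin V}C_ix^{a_i}$ with exponent set $\A$, with (N) verified for all $s\in[0,1]$ by pairing with the vector $w$ from (H) on each proper face (using that every face contains a vertex), and then Proposition~\ref{prop:degree}(3). Your Step~2 is sound but more than needed: $F_0$ and $F_V$ are the same function on $\R^n_{>0}$, so their degrees agree by Definition~\ref{def:deg}, which the paper uses implicitly.
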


\begin{proof}
The Newton polytope of $F_V$ is $P$, and its columns on a face of $P$ are among those of $F$, so $F_V$ satisfies (H). For $s\in[0,1]$, let $F_s=F_V+s\sum_{i\notin V}C_ix^{a_i}$, with the exponent set $\A$. Let $G$ be a proper face of $P$, and let $w$ be the vector given by (H) for $F$ and $G$, so that $\langle w,C_i\rangle>0$ for all $i\in G$. The truncation $(F_s)_G$ has the columns $C_i$ for $i\in G\cap V$, which is a nonempty set since $G$ has a vertex, and the columns $sC_i$ for $i\in G\setminus V$, which satisfy $\langle w,sC_i\rangle\ge 0$. Hence $\langle w,(F_s)_G(y)\rangle>0$ for $y\in\R^n_{>0}$, and $F_s$ satisfies (N) for all $s\in[0,1]$. By Proposition~\ref{prop:degree}(3), we get $\deg F=\deg F_1=\deg F_0=\deg F_V$.
\end{proof}

\begin{remark}[Without the half-space condition]\label{rem:general}
\begin{enumerate}
\item \emph{Why (N).} Proposition~\ref{prop:degree} only requires (N), and this condition cannot be dropped. Indeed, if some truncation $F_G$ has a positive zero, then $\hat F$ vanishes on $\partial P$, and the positive zeros of $F$ may fail to form a compact set. Moreover, under small perturbations of the coefficients, zeros of $F$ escape to infinity in the direction of the face $G$.
\item \emph{Without (H), the degree is not determined by the oriented matroids.} Rescaling the columns of $C$ by positive numbers does not change the signs of the minors of $A$ and $C$. Under (H) it preserves (N), so the degree does not change; this is what Viro's construction uses. Without (H), however, a rescaling can make a truncation acquire a positive zero, and the degree can jump. For instance, let $n=2$, $\A=\{(0,0),(1,0),(2,0),(0,1)\}$, and for $\lambda>0$ consider
\[
F_\lambda(x,y)=\big(1-2\lambda x+x^2,\ -\lambda x+x^2-y\big),
\]
so that $C_1=(1,0)$, $C_2=\lambda(-2,-1)$, $C_3=(1,1)$, $C_4=(0,-1)$. The six $2\times2$ minors of $C$ are $-\lambda,1,-1,-\lambda,2\lambda,-1$. Thus (U$_C$) holds, and the signs of all minors do not depend on $\lambda$. The bottom edge of $P$ contains three points of $\A$, and the columns $(1,0)$, $(-2,-1)$, $(1,1)$ do not lie in an open half-plane, so (H) fails on this edge. The truncation to this edge is $(1-2\lambda x+x^2,\,-\lambda x+x^2)$. Its second component gives $x=\lambda$, and then the first one gives $1-\lambda^2=0$. Hence (N) holds for $\lambda\neq1$ and fails for $\lambda=1$. The positive zeros of $F_\lambda$ satisfy $y=x^2-\lambda x>0$, that is, $x>\lambda$, and $x^2-2\lambda x+1=0$. For $\lambda<1$ there is no real root, and $\deg F_\lambda=0$. For $\lambda>1$ the roots are $x_\pm=\lambda\pm\sqrt{\lambda^2-1}$. Since $x_+x_-=1$, we have $x_-<1<\lambda$, so only $x_+$ gives a positive zero. At this zero $\det DF_\lambda=-2(x_+-\lambda)<0$, and therefore $\deg F_\lambda=-1$. As $\lambda$ crosses $1$, a zero comes in from infinity through the bottom edge.
\end{enumerate}
\end{remark}

From now on, as in the introduction, we assume (U$_A$) and (U$_C$) unless otherwise stated. The general case, under (H), is treated in Sections~\ref{sec:viro} to~\ref{sec:gale}, where the assumptions are stated explicitly. Under (U$_A$) and (U$_C$), $\deg F$ is defined and does not change under the rescalings of the columns of $C$ by positive numbers (Lemma~\ref{lem:N}(2) and Proposition~\ref{prop:degree}(3)). Moreover, the face structure of $P$ is determined by the signs of the maximal minors of $A$.

\begin{corollary}\label{cor:parity}
Assume (U$_A$) and (U$_C$), and that all the positive zeros of $F$ are nondegenerate. Let $U$ be an upper bound for their number. Then their number is at least $|\deg F|$, and at most $U$ if $U\equiv\deg F\pmod2$, at most $U-1$ otherwise.
\end{corollary}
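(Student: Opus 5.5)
The plan is to derive the statement directly from Proposition~\ref{prop:degree}(1), after checking that its hypothesis (N) holds. By Lemma~\ref{lem:N}(2), (U$_A$) and (U$_C$) imply (N). Proposition~\ref{prop:degree}(1) then tells us that the positive zeros of $F$ form a compact set and that $\deg F$ is defined. Since all positive zeros are assumed nondegenerate, they are isolated points of a compact set, hence finitely many, say $m$ of them. The same proposition gives
\[
\deg F=\sum_{F(x)=0}\sign\det DF(x),
\]
a sum of $m$ terms, each equal to $\pm1$.

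The two elementary facts about a sum of $m$ signs finish the argument. The absolute value of such a sum is at most $m$, which gives $m\ge|\deg F|$. The sum is also congruent to $m$ modulo $2$, because changing one term from $+1$ to $-1$ changes the sum by $2$; hence $m\equiv\deg F\pmod 2$.

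For the upper bound, suppose $m\le U$. If $U\equiv\deg F\pmod2$, there is nothing more to prove. Otherwise $U\not\equiv m\pmod2$, so $m\neq U$, and therefore $m\le U-1$.

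The only point that needs care is that nondegeneracy together with compactness gives finiteness. This holds because nondegenerate zeros are isolated by the inverse function theorem. Everything else is already contained in Proposition~\ref{prop:degree}, so I do not expect a real obstacle.
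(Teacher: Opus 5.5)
Your proposal is correct and follows essentially the same route as the paper: (U$_A$) and (U$_C$) give (N) by Lemma~\ref{lem:N}(2), Proposition~\ref{prop:degree}(1) writes $\deg F$ as a sum of one sign per positive zero, and the bound and the parity follow. You also spell out two steps the paper leaves implicit, namely the finiteness of the set of nondegenerate zeros and the passage from the congruence $m\equiv\deg F\pmod2$ to $m\le U-1$.
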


\begin{proof}
By Proposition~\ref{prop:degree}(1), the number of positive zeros is the number of terms in a sum of signs equal to $\deg F$. Hence it is at least $|\deg F|$, and congruent to $\deg F$ modulo $2$.
\end{proof}

By Theorem~\ref{cor:om}, $\deg F$ only depends on the oriented matroids of $A$ and $C$. Therefore its parity can be combined with any upper bound which is valid for given oriented matroids, such as the bounds of \cite{PartI}.

\section{Viro's construction}\label{sec:viro}

Throughout this section we assume (U$_C$) and the half-space condition (H) of Lemma~\ref{lem:N}(3). Both hold in the generic case. By Lemma~\ref{lem:N}(3), (H) implies (N), so that $\deg F$ is defined, and (H) still holds after any rescaling of the columns of $C$ by positive numbers. Our aim is to describe the positive zeros of the Viro deformations of $F$. For $h\in\R^N$ and $t>0$ we put
\[
F^{h,t}(x)=\sum_{i=1}^Nt^{h_i}C_i\,x^{a_i}.
\]
This is the fewnomial map with the same exponents and with coefficient matrix $C\,\mathrm{diag}(t^{h_1},\dots,t^{h_N})$. Its maximal minors are those of $C$ multiplied by positive numbers. Hence it satisfies (U$_C$) and (H), and by Proposition~\ref{prop:degree}(3) we have
\begin{equation}\label{eq:viro-invariance}
\deg F^{h,t}=\deg F\qquad\text{for all }t>0 .
\end{equation}

\subsection{Regular triangulations and cell systems}
Let $A$ be the $(n+1)\times N$ matrix with columns $(1,a_i)$. For $\sigma\subset[N]$, let $A^\sigma$ be its submatrix with the columns indexed by $\sigma$, in increasing order. We say that a set $\sigma\subset[N]$ with $|\sigma|=n+1$ is a \emph{maximal simplex of the regular subdivision $T_h$} induced by the heights $h$ if there is an affine function $\lambda_\sigma$ on $\R^n$ such that
\[
\lambda_\sigma(a_i)=h_i\ \ (i\in\sigma),\qquad\lambda_\sigma(a_i)<h_i\ \ (i\notin\sigma).
\]
In other words, $\{(a_i,h_i)\}_{i\in\sigma}$ spans a lower facet of the convex hull of the lifted points. We say that $h$ is \emph{generic} if every lower facet contains exactly $n+1$ lifted points. In this case the simplices $\conv(a_i,\ i\in\sigma)$, $\sigma\in T_h$, form a triangulation of $\conv(\A)$. Note that under (U$_A$), every facet of $\conv(\A)$ contains exactly $n$ points of $\A$, so it is a face of exactly one simplex of $T_h$.

Recall from \cite[Definition~3.1]{BSS} that a real $n\times(n+1)$ matrix $M$ is \emph{positively spanning} if the numbers $(-1)^j\operatorname{minor}(M,j)$, $j=1,\dots,n+1$, are nonzero and have the same sign, where $\operatorname{minor}(M,j)$ is the determinant of the matrix obtained by deleting the $j$-th column of $M$. When $M$ has rank $n$, these numbers are, by Cramer's rule, the coordinates of a vector spanning $\ker M$. Thus $M$ is positively spanning if and only if $\ker M$ is a line spanned by a vector with positive coordinates, that is, if and only if the columns of $M$ positively span $\R^n$ \cite[Proposition~3.2]{BSS}.

For $\sigma\in T_h$, the \emph{cell system} is the restriction $F_\sigma(x)=\sum_{l\in\sigma}C_lx^{a_l}$. Following \cite{BSS}, we say that $\sigma$ is \emph{positively decorated} by $C$ if the $n\times(n+1)$ matrix $C^\sigma$ is positively spanning. Let $\widehat C^\sigma$ be the $(n+1)\times(n+1)$ matrix with columns $(1,C_l)$, $l\in\sigma$, in increasing order, and put
\[
s(\sigma)=\sign\det A^\sigma\cdot\sign\det\widehat C^\sigma .
\]
The two factors are the orientations of the simplex $\sigma$ induced by the points $a_l$ and by the decoration $C_l$, $l\in\sigma$. This sign was introduced by Bihan, Santos and Spaenlehauer \cite[Section~5]{BSS}, and we use their notion of positively decorated simplices throughout. Expanding $\det\widehat C^\sigma$ along its first row gives $\det\widehat C^\sigma=\sum_j(-1)^{1+j}\operatorname{minor}(C^\sigma,j)$. Therefore, when $\sigma$ is positively decorated, $\sign\det\widehat C^\sigma=\sign\operatorname{minor}(C^\sigma,1)$.

For a $C^1$ map $G=(G_1,\dots,G_n)$ defined on $\R^n_{>0}$, or on $(\mathbb C^*)^n$, we use the \emph{toric Jacobian matrix}
\[
D^{\mathrm t}G(x)=\Big(x_j\frac{\partial G_i}{\partial x_j}(x)\Big)_{i,j}=DG(x)\,\mathrm{diag}(x_1,\dots,x_n).
\]
For $x\in\R^n_{>0}$, $\det D^{\mathrm t}G(x)=x_1\cdots x_n\det DG(x)$ has the sign of $\det DG(x)$. Note the following simple facts. For a monomial, $D^{\mathrm t}(x^b)=x^b\,b^T$. The chain rule $D^{\mathrm t}(L\circ\phi)=DL(\phi)\,D^{\mathrm t}\phi$ holds for every $C^1$ map $L$. Finally, if $\gamma\in\R^n_{>0}$ and $G_\gamma(x)=G(\gamma_1x_1,\dots,\gamma_nx_n)$, then $D^{\mathrm t}G_\gamma(x)=D^{\mathrm t}G(\gamma_1x_1,\dots,\gamma_nx_n)$. The following lemma is \cite[Proposition~3.3]{BSS}, together with the sign of the Jacobian determinant.

\begin{lemma}\label{lem:cell}
Let $\sigma\in T_h$. The cell system $F_\sigma$ has at most one positive zero, and it has one if and only if $\sigma$ is positively decorated. In this case the positive zero $z_\sigma$ is nondegenerate, and $\sign\det DF_\sigma(z_\sigma)=\sign\det D^{\mathrm t}F_\sigma(z_\sigma)=s(\sigma)$.
\end{lemma}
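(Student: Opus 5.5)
The plan is to reduce $F_\sigma$ to a linear problem through the monomial change of variables determined by the simplex. Since $\sigma\in T_h$ is a full-dimensional simplex, the points $a_l$, $l\in\sigma$, are affinely independent, so $\det A^\sigma\neq0$. Write $\sigma=\{l_0<\dots<l_n\}$ and consider the map $\phi\colon\R^n_{>0}\to\R^n_{>0}$, $\phi(x)=(x^{a_{l_j}-a_{l_0}})_{j=1,\dots,n}$.

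In logarithmic coordinates, $\phi$ is the linear map whose matrix $E$ has rows $a_{l_j}-a_{l_0}$, and $\det E=\det A^\sigma$ after column operations on $A^\sigma$. Hence $\phi$ is a diffeomorphism of $\R^n_{>0}$. Dividing by the positive function $x^{a_{l_0}}$ gives
\[
F_\sigma(x)=x^{a_{l_0}}\,L(\phi(x)),\qquad L(y)=C_{l_0}+\sum_{j\ge1}C_{l_j}y_j,
\]
where $L$ is affine. So the positive zeros of $F_\sigma$ correspond bijectively to the solutions $y\in\R^n_{>0}$ of $C^\sigma(1,y)^T=0$.

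By (U$_C$), $C^\sigma$ has rank $n$, so $\ker C^\sigma$ is a line. There is therefore at most one such $y$, and it exists exactly when the kernel is spanned by a positive vector. By the characterization recalled before the lemma, this means $\sigma$ is positively decorated. This gives the counting part.

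For the Jacobian at $z=z_\sigma$, $F_\sigma(z)=0$, so the derivative of the factor $x^{a_{l_0}}$ does not contribute. Using the toric chain rule stated above, $D^{\mathrm t}\phi(x)=\mathrm{diag}(\phi(x))\,E$. Hence
\[
D^{\mathrm t}F_\sigma(z)=z^{a_{l_0}}\,C'\,\mathrm{diag}(y)\,E,
\]
where $C'=(C_{l_1},\dots,C_{l_n})$ and $y=\phi(z)>0$. Its determinant has sign $\sign\det C'\cdot\sign\det A^\sigma$. Since $\det C'=\operatorname{minor}(C^\sigma,1)$, and we saw that $\sign\det\widehat C^\sigma=\sign\operatorname{minor}(C^\sigma,1)$ for positively decorated $\sigma$, this sign equals $s(\sigma)$.

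Nondegeneracy follows because all minors of $C^\sigma$ are nonzero. Finally, $\sign\det DF_\sigma=\sign\det D^{\mathrm t}F_\sigma$ on the positive orthant, since the two determinants differ by the factor $x_1\cdots x_n>0$.

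The only delicate point is the bookkeeping of signs. One must check that $\det E=\det A^\sigma$ exactly with the column order used, which follows from subtracting the first column, with no reordering sign. One must also check that deleting the first column of $C^\sigma$ gives $C'$ in increasing order. Both are routine.
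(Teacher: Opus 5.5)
Your proof is correct and follows the paper's argument step for step. The monomial change of variables $\phi$ reduces the cell system to the linear system $C^\sigma(1,Y)=0$, (U$_C$) makes $\ker C^\sigma$ a line, and the toric chain rule at the zero gives $D^{\mathrm t}F_\sigma(z_\sigma)=z_\sigma^{a_{l_0}}\,C^{\sigma\setminus\{l_0\}}\,\mathrm{diag}(Y)\,M$, with $\det M=\det A^\sigma$ and $\det C^{\sigma\setminus\{l_0\}}=\operatorname{minor}(C^\sigma,1)$; the only cosmetic difference is that you factor $F_\sigma=x^{a_{l_0}}\,L\circ\phi$ and use the product rule at the zero, while the paper multiplies by $x^{-a_{l_0}}$ first, which is the same computation.
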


\begin{proof}
Write $\sigma=\{l_0<l_1<\dots<l_n\}$. As in \cite{BSS}, we multiply $F_\sigma$ by $x^{-a_{l_0}}$. The system $F_\sigma=0$ is then equivalent on $\R^n_{>0}$ to
\[
x^{-a_{l_0}}F_\sigma(x)=C_{l_0}+\sum_{j=1}^nC_{l_j}\,x^{a_{l_j}-a_{l_0}}=0 .
\]
Since $A^\sigma$ is invertible, the vectors $a_{l_j}-a_{l_0}$, $j=1,\dots,n$, form a basis of $\R^n$. Hence the monomial map $\phi\colon x\mapsto Y=(x^{a_{l_1}-a_{l_0}},\dots,x^{a_{l_n}-a_{l_0}})$ is a diffeomorphism of $\R^n_{>0}$. Under $\phi$, the system becomes the linear system $L(Y)=C_{l_0}+\sum_jY_jC_{l_j}=C^\sigma\,(1,Y)=0$, $Y\in\R^n_{>0}$. Its solutions are the vectors $Y>0$ such that $(1,Y)\in\ker C^\sigma$. By (U$_C$), $\ker C^\sigma$ is a line. Therefore there is at most one solution, and there is one if and only if this line is spanned by a positive vector, that is, if and only if $C^\sigma$ is positively spanning.

Assume now that $\sigma$ is positively decorated. Since $F_\sigma(z_\sigma)=0$, the toric Jacobian matrix of $x^{-a_{l_0}}F_\sigma$ at $z_\sigma$ is $z_\sigma^{-a_{l_0}}D^{\mathrm t}F_\sigma(z_\sigma)$. On the other hand, $x^{-a_{l_0}}F_\sigma=L\circ\phi$, so by the chain rule this matrix is also $DL\cdot D^{\mathrm t}\phi(z_\sigma)=C^{\sigma\setminus\{l_0\}}\,\mathrm{diag}(Y)\,M$, where $M$ is the $n\times n$ matrix with rows $a_{l_j}-a_{l_0}$. Here $\det C^{\sigma\setminus\{l_0\}}=\operatorname{minor}(C^\sigma,1)$, which is nonzero and has the sign of $\det\widehat C^\sigma$. Moreover $\det M=\det A^\sigma$, as one sees by subtracting the first column of $A^\sigma$ from the others. Since $Y$ and $z_\sigma$ are positive, we conclude that $\det D^{\mathrm t}F_\sigma(z_\sigma)\neq0$ and that its sign is $s(\sigma)$, which is also the sign of $\det DF_\sigma(z_\sigma)$.
\end{proof}

\subsection{The Viro formula}

In the following theorem, assertion~(1) describes the positive zeros of $F^{h,t}$ for $t$ small. They are the zeros constructed by Bihan, Santos and Spaenlehauer \cite[Theorem~3.4]{BSS} near the positively decorated simplices, with the Jacobian signs $s(\sigma)$, and an elementary localization argument shows that there are no others. Assertion~(2) is the \emph{Viro formula} for the degree. It follows from~(1), but it also has a short piecewise linear proof which does not use the zeros of $F^{h,t}$ (Proposition~\ref{prop:viro-pl}).

\begin{theorem}\label{thm:viro}
Assume (U$_C$) and (H), for instance (U$_A$) and (U$_C$), and let $h\in\R^N$ be generic.
\begin{enumerate}
\item For $t>0$ small enough, the positive zeros of $F^{h,t}$ are nondegenerate, and there is exactly one of them for each positively decorated simplex $\sigma\in T_h$, and no other. The sign of the Jacobian determinant at the zero attached to $\sigma$ is $s(\sigma)$.
\item Consequently
\[
\deg F=\sum_{\substack{\sigma\in T_h\\ \sigma\ \text{positively decorated}}}s(\sigma).
\]
\end{enumerate}
\end{theorem}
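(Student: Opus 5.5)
We prove assertion~(1) by a localization argument in logarithmic coordinates, and deduce~(2) from it together with \eqref{eq:viro-invariance}. Put $w=\log x$ and $\nu=-\log t\to+\infty$, and rescale $w=\nu v$. For $i\in[N]$ the modulus of the $i$-th term of $F^{h,t}(x)$ is $|C_i|\,e^{\nu(\langle a_i,v\rangle-h_i)}$. Let $\Lambda(v)=\max_i(\langle a_i,v\rangle-h_i)$. Since $h$ is generic, the domains of linearity of $\Lambda$ form the polyhedral subdivision of $\R^n$ dual to $T_h$. For each maximal simplex $\sigma$, the terms $i\in\sigma$ are simultaneously maximal exactly at one vertex $v_\sigma$ of this subdivision, namely the point with $\langle a_i,v_\sigma\rangle-h_i=-\lambda_\sigma$-normalized for $i\in\sigma$. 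Equivalently, $-v_\sigma$ is the linear part of $\lambda_\sigma$. Every other point of $\R^n$ lies in the relative interior of a dual cell of a lower-dimensional simplex $\tau$ of $T_h$, possibly a face on $\partial P$ if the cell is unbounded.

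\textbf{Step 1: existence near each $\sigma$.} Substitute $x=t^{-v_\sigma}y$, that is, $x_j=t^{-(v_\sigma)_j}y_j$. Then $t^{-c}F^{h,t}(t^{-v_\sigma}y)=F_\sigma(y)+R_t(y)$, where $c$ is the common exponent. Each term of $R_t$ carries a factor $t^{\delta_i}$ with $\delta_i=h_i-\lambda_\sigma(a_i)>0$, so $R_t\to0$ in $C^1$ on compact subsets of $\R^n_{>0}$. By Lemma~\ref{lem:cell}, $F_\sigma$ has a unique positive zero $z_\sigma$ when $\sigma$ is positively decorated, and it is nondegenerate with sign $s(\sigma)$. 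The implicit function theorem then gives, for $t$ small, exactly one zero of $F^{h,t}$ in $t^{-v_\sigma}K$ for a small compact neighbourhood $K$ of $z_\sigma$. This zero is nondegenerate, and its Jacobian sign is $s(\sigma)$, because the diagonal rescaling preserves the sign of the toric Jacobian determinant. This is \cite[Theorem~3.4]{BSS}.

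\textbf{Step 2: no other zeros (the main obstacle).} We must show that, for $t$ small, every positive zero lies in one of the sets $t^{-v_\sigma}K$. Suppose not, and take a sequence $t_m\to0$ with zeros $x_m$ outside these sets. Write $\log x_m=\nu_m v_m$ and pass to subsequences.

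If $\nu_m v_m$ stays at bounded distance from $\nu_m v_\sigma$ for some $\sigma$, then the rescaled points $y_m=t_m^{v_\sigma}x_m$ converge, after extracting, in $[0,\infty]^n$. If the limit is interior, it is a positive zero of $F_\sigma$ other than $z_\sigma$ (or $\sigma$ is not positively decorated), contradicting Lemma~\ref{lem:cell}. If the limit is at the boundary, the same argument reduces to a face of $\sigma$.

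Otherwise the dominant terms at $x_m$ are indexed by a face $\tau$ of some simplex that is not maximal. Let $\beta$ be a limit direction of the deviation, so that $\tau$ is the set of terms maximizing $\langle a_i,\beta\rangle$ among $i\in\sigma$. After normalizing by the largest term, the dominant part tends to $F_\tau$ evaluated at some positive point, while the other terms tend to $0$. So in the limit we need $F_\tau$ to have a positive zero.

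Since $\tau$ has at most $n$ elements, (U$_C$) makes the columns $C_i$, $i\in\tau$, linearly independent. By the monomial change of variables of Lemma~\ref{lem:cell}, $F_\tau$ then has no positive zero, and this holds with a uniform lower bound on $|F_\tau|/\max_{i\in\tau}|\text{term}_i|$. That uniform bound gives the contradiction. Unbounded cells, which correspond to faces $\tau\subset\partial P$, are handled in the same way; alternatively one can use (H), as in Proposition~\ref{lem:vertex}, with a half-space vector $w$. The delicate part is organizing this compactness argument uniformly over all cells. A cleaner route is to cover $\R^n$ by finitely many neighbourhoods of the dual cells and bound $|F^{h,t}|$ from below on each, away from the $\sigma$-neighbourhoods.

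\textbf{Step 3: the Viro formula.} For $t$ small, all positive zeros of $F^{h,t}$ are nondegenerate. Proposition~\ref{prop:degree}(1) therefore gives $\deg F^{h,t}=\sum_\sigma s(\sigma)$, the sum running over the positively decorated simplices $\sigma\in T_h$, and \eqref{eq:viro-invariance} yields~(2).
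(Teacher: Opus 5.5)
Your strategy is the paper's. Step~1 (rescaling around each simplex, the local equation $F_\sigma+r_t$, Lemma~\ref{lem:cell}) and Step~3 (invariance of the degree along the Viro deformation) coincide with its proof. One slip in the setup: the vertex of the dual subdivision attached to $\sigma$ is $v_\sigma=\alpha_\sigma$, the linear part of $\lambda_\sigma$ itself, not its opposite. Your own substitution $x=t^{-v_\sigma}y$, with exponents $h_i-\lambda_\sigma(a_i)$ off $\sigma$, requires this.

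\textbf{Where you differ: Step~2.} You only sketch it, and the paper organizes it differently. The paper uses the convention $x=t^{v}$, $\varphi_i(v)=h_i+\langle a_i,v\rangle$, $\varphi=\min_i\varphi_i$. For every $J\subset[N]$ not contained in a simplex of $T_h$, the convex piecewise affine function $\max_{i\in J}\varphi_i-\varphi$ is positive, hence has a positive minimum. Choosing $\gamma$ below all these minima forces $J_\gamma(v)=\{i:\ \varphi_i(v)<\varphi(v)+\gamma\}$ into a simplex of $T_h$ for \emph{every} $v\in\R^n$, bounded or not. After dividing by the largest term, a nonnegative combination of the columns in $J_\gamma(v)$, with largest coefficient $1$, is $O(e^{-s\gamma})$. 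Then (U$_C$) rules out every case except $J_\gamma(v)=\sigma$ positively decorated. This is exactly the finite cover you mention at the end, and it uses only (U$_C$).

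\textbf{Completing your version.} Your sequential argument can also be completed, but your dichotomy really covers only part of it. After extraction there are four regimes.
(a) $\log x_m-\nu_m v_\sigma$ stays bounded. This is your first case. Here $y_m$ stays in a compact subset of $\R^n_{>0}$, so the sentence about a boundary limit is vacuous.
(b) $v_m\to v_\sigma$ with unbounded deviation. Your $\beta$-argument is correct here: the deviation is $o(\nu_m)$, so the terms off $\sigma$ are negligible.
(c) $v_m\to v^*$, where $v^*$ is not a vertex. The dominant set is the set of indices maximizing $\langle a_i,v^*\rangle-h_i$, a nonmaximal simplex of $T_h$. It is not the $\beta$-face of an arbitrary $\sigma$: terms off $\sigma$ are of the same exponential order unless $\sigma$ contains this cell.
(d) $|v_m|\to\infty$. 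The dominant terms are those on the face of $P$ in the limit direction. When (U$_A$) fails this face can carry more than $n$ points, so (U$_C$) does not apply directly. Your half-space vector from (H) is what closes this case.

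In (b)--(d) you do not need the dominant part to converge to $F_\tau$ at a positive point, and it may not, since ratios inside $\tau$ can degenerate. What you actually use is the uniform bound $|\sum_{i\in\tau}\rho_iC_i|\ge c\max_i\rho_i$ for $\rho\ge0$, and it suffices.

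With these cases written out, your proof is complete. The paper's threshold $\gamma$ buys a single estimate valid on all of $\R^n$ and keeps (H) out of Step~2; (H) is needed only for the invariance used in Step~3.
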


\begin{proof}
Assertion~2 is Proposition~\ref{prop:viro-pl}. It also follows from assertion~1, Proposition~\ref{prop:degree}(1) and~\eqref{eq:viro-invariance}. Indeed, the degree of $F$ is the degree of $F^{h,t}$ for any $t>0$, and for $t$ small this is the sum of the Jacobian signs at the positive zeros of $F^{h,t}$.

Let us prove assertion~1, following the proof of \cite[Theorem~3.4]{BSS}. For $\sigma\in T_h$, the function $\lambda_\sigma$ is affine, and we write
\[
\lambda_\sigma(a)=\langle\alpha_\sigma,a\rangle+\beta_\sigma,\qquad\alpha_\sigma\in\R^n,\ \beta_\sigma\in\R .
\]
For $x\in\R^n_{>0}$ we put $x\,t^{-\alpha_\sigma}=(x_1t^{-\alpha_{\sigma,1}},\dots,x_nt^{-\alpha_{\sigma,n}})$, and for a set $K$ of such points we put $K\,t^{-\alpha_\sigma}=\{x\,t^{-\alpha_\sigma}:\ x\in K\}$. Note that the vectors $\alpha_\sigma$, $\sigma\in T_h$, are pairwise distinct, since two distinct lower facets of the lifted configuration $\{(a_i,h_i)\}$ cannot be parallel.

\emph{Step 1: the zeros near the positively decorated simplices.} Let $\sigma\in T_h$. Since $t^{h_i}(x\,t^{-\alpha_\sigma})^{a_i}=t^{h_i-\langle\alpha_\sigma,a_i\rangle}x^{a_i}$, we get
\begin{equation}\label{eq:local}
\frac{F^{h,t}(x\,t^{-\alpha_\sigma})}{t^{\beta_\sigma}}=F_\sigma(x)+r^{(\sigma)}_t(x),\qquad r^{(\sigma)}_t(x)=\sum_{i\notin\sigma}t^{h_i-\lambda_\sigma(a_i)}C_i\,x^{a_i}.
\end{equation}
The exponents $h_i-\lambda_\sigma(a_i)$, $i\notin\sigma$, are positive since $\sigma\in T_h$. Therefore, on every compact set, $r^{(\sigma)}_t$ tends to $0$ in the $C^1$ topology as $t\to0$.

Assume that $\sigma$ is positively decorated, and let $z_\sigma$ be the positive zero of the cell system $F_\sigma$ (Lemma~\ref{lem:cell}; this is \cite[Proposition~3.3]{BSS}). Since $z_\sigma$ is nondegenerate, there is a compact ball $K_\sigma\subset\R^n_{>0}$ centred at $z_\sigma$ on which $F_\sigma$ is a diffeomorphism onto its image. Then, for $t$ small, $F_\sigma+r^{(\sigma)}_t$ has exactly one zero $z_{\sigma,t}$ in $K_\sigma$. This zero is nondegenerate, and the sign of its toric Jacobian determinant is that of $F_\sigma$ at $z_\sigma$, that is, $s(\sigma)$ by Lemma~\ref{lem:cell}. By~\eqref{eq:local}, $z_{\sigma,t}\,t^{-\alpha_\sigma}$ is a nondegenerate positive zero of $F^{h,t}$. The toric Jacobian matrix commutes with the scaling $x\mapsto x\,t^{-\alpha_\sigma}$, and $t^{\beta_\sigma}>0$. Hence the toric Jacobian determinant of $F^{h,t}$ at $z_{\sigma,t}\,t^{-\alpha_\sigma}$ has the sign $s(\sigma)$, and so has its Jacobian determinant. This is the construction of \cite[Theorem~3.4]{BSS}, and the sign is the sign $s(\sigma)$ of \cite[Section~5]{BSS}.

\emph{Step 2: there are no other zeros.} We show that for $t$ small every positive zero of $F^{h,t}$ lies in one of the sets $K_\sigma\,t^{-\alpha_\sigma}$ of Step~1, where $\sigma$ is positively decorated. By Step~1, it is then the zero $z_{\sigma,t}\,t^{-\alpha_\sigma}$. Put $s=-\log t$ and write a positive point as $x=t^{v}=e^{-sv}$. Then $t^{h_i}x^{a_i}=e^{-s\varphi_i(v)}$ with $\varphi_i(v)=h_i+\langle a_i,v\rangle$. We put $\varphi(v)=\min_i\varphi_i(v)$, and for $\gamma>0$ we let $J_\gamma(v)=\{i:\ \varphi_i(v)<\varphi(v)+\gamma\}$. Observe that the sets $\{i:\ \varphi_i(v)=\varphi(v)\}$ are the cells of $T_h$.

Let $J\subset[N]$ be a subset which is not contained in a simplex of $T_h$. The function $v\mapsto\max_{i\in J}\varphi_i(v)-\varphi(v)$ is convex, piecewise affine and positive, so its minimum is positive. Let $\gamma>0$ be smaller than all these minima. Then $J_\gamma(v)$ is contained in a simplex of $T_h$ for every $v$. Dividing the equation $F^{h,t}(x)=0$ by $e^{-s\varphi(v)}$, we get
\[
\sum_{i\in J_\gamma(v)}\rho_iC_i=-\sum_{i\notin J_\gamma(v)}\rho_iC_i,\qquad \rho_i=e^{-s(\varphi_i(v)-\varphi(v))}\in(0,1].
\]
The right-hand side has norm at most $N\max_i|C_i|\,e^{-s\gamma}$. On the left-hand side, the coefficients lie in $[e^{-s\gamma},1]$, and one of them is equal to $1$. If $|J_\gamma(v)|\le n$, the columns $C_i$, $i\in J_\gamma(v)$, are linearly independent by (U$_C$). If $J_\gamma(v)$ is a simplex of $T_h$ which is not positively decorated, then $\ker C^\sigma$ contains no nonzero nonnegative vector. In both cases the left-hand side has norm at least a constant $c>0$. Consequently, for $s$ large, every positive zero $x$ of $F^{h,t}$ satisfies $J_\gamma(v)=\sigma$ for a positively decorated simplex $\sigma$, and the vector $(\rho_l)_{l\in\sigma}$ is close to the positive vector spanning $\ker C^\sigma$ with largest coordinate $1$. Put $y=x\,t^{\alpha_\sigma}$, so that $x=y\,t^{-\alpha_\sigma}$. The ratios $y^{a_l-a_{l'}}=\rho_l/\rho_{l'}$, $l,l'\in\sigma$, are then close to the corresponding ratios at $z_\sigma$. Since the vectors $a_l-a_{l'}$ span $\R^n$, it follows that $y$ lies in $K_\sigma$ for $t$ small.
\end{proof}

\begin{remark}[Complex zeros]\label{rem:complex}
Assume (U$_A$) and that the exponents are integers. The argument of Step~2, applied to the moduli of the coordinates of the complex zeros as in the proof of Bernstein's theorem by Huber and Sturmfels \cite{HS}, gives three counts for $t$ small. A simplex $\sigma\in T_h$ carries $|\det A^\sigma|$ zeros of $F^{h,t}$ in $(\mathbb C^*)^n$. Among them, one is positive if $\sigma$ is positively decorated and none otherwise, and this positive zero has the Jacobian sign $s(\sigma)$. Summing over $T_h$, we get the number $\mathrm{Vol}(\A)$ of complex zeros, the number of positive zeros of the Viro system, and the degree $\deg F$. The first and the last numbers do not depend on $h$, while the second one does. In particular, Viro systems attain the Kushnirenko bound with nondegenerate zeros. For different supports, the same argument, with a different lifting for each equation and mixed subdivisions instead of triangulations, gives Bernstein's theorem, as in \cite{HS}.
\end{remark}

\begin{example}[A running example: the Viro formula]\label{ex:run}
We use the map of the introduction as a running example:
\[
F(x,y)=\big(3x-3x^3+x^4y^2+x^2y^3-3y^2,\ 2x^3-3x^4y^2+3x^2y^3-2y^2\big),
\]
with $\A=\{a_1,\dots,a_5\}=\{(1,0),(3,0),(4,2),(2,3),(0,2)\}$, the vertices of a pentagon $P$ in counterclockwise order, and columns
\[
C_1=(3,0),\quad C_2=(-3,2),\quad C_3=(1,-3),\quad C_4=(1,3),\quad C_5=(-3,-2).
\]
Here $n=k=2$ and $N=5$, and (U$_A$) and (U$_C$) hold. A triangle $\sigma\subset[5]$ is positively decorated if and only if its three columns positively span $\R^2$. Among the ten triangles, these are exactly the five \emph{ears} $\{1,2,3\}$, $\{2,3,4\}$, $\{3,4,5\}$, $\{1,4,5\}$, $\{1,2,5\}$ formed by three consecutive vertices, and all of them have $s(\sigma)=+1$. The triangulations of $P$ are the five fans from a vertex. They are regular, and each of them contains exactly two ears. For instance, $h=(-1,0,0,0,0)$ induces the fan $T_h=\{\{1,2,3\},\{1,3,4\},\{1,4,5\}\}$ from $a_1$, in which $\{1,2,3\}$ and $\{1,4,5\}$ are positively decorated. By Theorem~\ref{thm:viro}, we get $\deg F=1+1=2$. Moreover, for $t$ small the Viro map $F^{h,t}$ has exactly two positive zeros, both with a positive Jacobian determinant.
\end{example}

\begin{example}[Cancellation]\label{ex:cancel}
In general the signs $s(\sigma)$ do not all agree. Let $\A=\{0,1,2\}^2$, with $a_1=(0,0)$, $a_2=(1,0)$, $a_3=(2,0)$, $a_4=(0,1)$, \dots, $a_9=(2,2)$, and consider
\[
\begin{aligned}
f_1&=-1-x-x^2+2y-3xy+x^2y-2y^2+2xy^2+3x^2y^2,\\
f_2&=1+3x-3x^2-y-xy+x^2y+3y^2+xy^2-x^2y^2,
\end{aligned}
\]
so that $C_1=(-1,1)$, $C_2=(-1,3)$, $C_3=(-1,-3)$, $C_4=(2,-1)$, $C_5=(-3,-1)$, $C_6=(1,1)$, $C_7=(-2,3)$, $C_8=(2,1)$, $C_9=(3,-1)$. Here $n=2$ and $k=6$. Condition (U$_A$) fails, since each edge of the square contains three points of $\A$. However, (U$_C$) holds, and so does (H): on each edge, the three columns lie in an open half-plane (for the edge $\{1,2,3\}$, their first coordinates are $-1$). The heights $h_i=\alpha^2+\beta^2-\alpha\beta$ for $a_i=(\alpha,\beta)$, that is, $h=(0,1,4,1,1,3,4,3,4)$, induce the unimodular triangulation
\[
T_h=\big\{\{1,2,5\},\{1,4,5\},\{2,3,6\},\{2,5,6\},\{4,5,8\},\{4,7,8\},\{5,6,9\},\{5,8,9\}\big\}.
\]
Five of its triangles are positively decorated: $\{1,4,5\}$, $\{2,3,6\}$ and $\{5,8,9\}$, with $s(\sigma)=+1$, and $\{4,5,8\}$ and $\{5,6,9\}$, with $s(\sigma)=-1$. By Theorem~\ref{thm:viro}, we get $\deg F=3-2=1$. On the other hand, for $t$ small the Viro map $F^{h,t}$ has five nondegenerate positive zeros, three with a positive and two with a negative Jacobian determinant. For the heights $h_i=\alpha^2+\beta^2+\alpha\beta$, the triangulation is the mirror image, and the positively decorated triangles are $\{2,4,5\}$ and $\{3,5,6\}$, with sign $+1$, and $\{4,5,7\}$, with sign $-1$. Thus the number of positive zeros of the Viro map depends on $h$, while the degree does not. In agreement with the vertex reduction (Proposition~\ref{lem:vertex}), we have $\deg F=\deg F_V$ with $F_V=C_1+C_3x^2+C_7y^2+C_9x^2y^2$. Indeed, the closed polygon $C_1C_3C_9C_7$ of the columns at the vertices of the square, taken in counterclockwise order, turns once around the origin (Corollary~\ref{cor:n2}). Finally, since the Viro sum has an odd number of terms, $F$ has a positive zero (Theorem~\ref{thm:F}).
\end{example}

\section{The Newton polytope formula}\label{sec:newton}

Theorems~\ref{thm:A} and~\ref{thm:Ap}, as well as the Viro formula of Theorem~\ref{thm:viro}(2), all reduce to the computation of the degree of a piecewise linear map. For a triangulation $T$ of $\partial P$ or of $P$ with vertices in $\A$, we denote by $\Pi_T$ the continuous map which is affine on every simplex of $T$ and sends each vertex $a_s$ of $T$ to $C_s$.

\begin{lemma}\label{lem:homotopy}
Assume (H), and let $T$ be a triangulation of $\partial P$ or of $P$ with vertices in $\A$ such that every proper face of $P$ is a union of simplices of $T$. Then $\Pi_T$ does not vanish on $\partial P$, and $\deg F$ is the degree of $\Pi_T/|\Pi_T|\colon\partial P\to S^{n-1}$, where $\partial P$ is oriented as the boundary of $P$.
\end{lemma}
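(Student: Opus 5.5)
The plan is to deform $\hat F|_{\partial P}$ linearly into $\Pi_T|_{\partial P}$ and to show that no intermediate map vanishes on $\partial P$. By Proposition~\ref{prop:degree}(1), $\deg F$ is the degree of $\hat F/|\hat F|$ on $\partial P$, so this homotopy gives the result. For $r\in[0,1]$ put
\[
G_r(q)=(1-r)\hat F(q)+r\,\Pi_T(q),\qquad q\in\partial P .
\]
Both maps are continuous on $\partial P$. It therefore suffices to show that $G_r(q)\neq0$ for all $r$ and all $q\in\partial P$. The cases $r=0$ and $r=1$ then also give the nonvanishing of $\hat F$ and of $\Pi_T$ on $\partial P$.

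Fix $q\in\partial P$, and let $G$ be the smallest face of $P$ containing $q$, so that $q$ lies in the relative interior of the proper face $G$. We take both summands separately.

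First, by Proposition~\ref{prop:moment}(2), $\hat\pi_i(q)=0$ for $i\notin G$, and the weights $\hat\pi_i(q)$, $i\in G$, are positive with sum $1$. Hence $\hat F(q)=\sum_{i\in G}\hat\pi_i(q)C_i$ is a convex combination of the columns indexed by $G$.

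Second, since $G$ is a union of simplices of $T$, the point $q$ lies in a simplex $\tau$ of $T$ contained in $G$. Its vertices are points of $\A$ in $G$, so $\Pi_T(q)=\sum_{s\in\tau}\beta_sC_s$ with barycentric coordinates $\beta_s\ge0$ summing to $1$. This is also a convex combination of columns indexed by $G$.

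Consequently $G_r(q)$ lies in $\conv(C_i,\ i\in G)$. By (H) there is $w$ with $\langle w,C_i\rangle>0$ for all $i\in G$, so $\langle w,G_r(q)\rangle>0$ and $G_r(q)\neq0$. The map $(r,q)\mapsto G_r(q)/|G_r(q)|$ is then a homotopy from $\hat F/|\hat F|$ to $\Pi_T/|\Pi_T|$ on $\partial P$, and the degrees agree.

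When $T$ triangulates all of $P$, the same argument applies to its restriction to $\partial P$. The simplices of $T$ lying in $\partial P$ triangulate $\partial P$, and every proper face is still a union of them. The degree of a map $\partial P\to S^{n-1}$ only involves its values on $\partial P$, so nothing changes.

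The only delicate point is making sure that, for $q$ in the relative interior of $G$, the simplex of $T$ containing $q$ really has all its vertices in $G$. This is exactly what the hypothesis that every proper face of $P$ is a union of simplices of $T$ provides. Together with the support property of $\hat\pi$ from Proposition~\ref{prop:moment}(2), it puts both maps in the same convex hull, where (H) applies.
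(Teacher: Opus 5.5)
Your proposal is correct and is essentially the paper's proof: on the relative interior of each proper face $G$, both $\hat F$ (by Proposition~\ref{prop:moment}(2)) and $\Pi_T$ (by the face-compatibility of $T$) are convex combinations of the columns $C_i$, $i\in G$. Hence (H) keeps the straight-line homotopy away from $0$. The one ordering point is that Proposition~\ref{prop:degree}(1) needs (N) first, which your $r=0$ case supplies, as does Lemma~\ref{lem:N}(3) in the paper.
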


\begin{proof}
By Lemma~\ref{lem:N}(3), (H) implies (N). Hence, by Proposition~\ref{prop:degree}(1), $\deg F$ is the degree of $\hat F/|\hat F|$ on $\partial P$. Let $q$ be a point of the relative interior of a proper face $G$ of $P$. The simplex of $T$ whose relative interior contains $q$ lies in $G$, so $\Pi_T(q)=\sum_{i\in G}\beta_i(q)C_i$, where the $\beta_i(q)$ are the barycentric coordinates in this simplex, extended by $0$. On the other hand, $\hat F(q)=\sum_{i\in G}\hat\pi_i(q)C_i$ with nonnegative weights, not all zero, by Proposition~\ref{prop:moment}(2). Thus every convex combination $(1-\theta)\hat F(q)+\theta\,\Pi_T(q)$, $\theta\in[0,1]$, is a nonnegative nonzero combination of the columns $C_i$, $i\in G$, and it does not vanish by (H). Therefore $\hat F/|\hat F|$ and $\Pi_T/|\Pi_T|$ are homotopic on $\partial P$.
\end{proof}

\begin{lemma}\label{lem:PL}
Let $T$ be a triangulation of $\partial P$ with vertices in $\A$ such that every face of $P$ is a union of simplices of $T$, put $\Pi=\Pi_T$, assume that $\Pi$ does not vanish, and let $u$ be as in Theorem~\ref{thm:A}. Then the degree of $\Pi/|\Pi|\colon\partial P\to S^{n-1}$ is given by the formula of Theorem~\ref{thm:Ap}.
\end{lemma}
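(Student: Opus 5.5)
The plan is to compute the degree of $\Pi/|\Pi|\colon\partial P\to S^{n-1}$ by counting signed preimages of the direction $u/|u|$. This direction will be a regular value, and the local orientation signs will be computed simplex by simplex.

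\emph{Step 1: locate the preimages and show genericity.} A point $q$ of $\partial P$ lies in the relative interior of a unique simplex $\rho$ of $T$, and there $\Pi(q)=\sum_{s\in\rho}\beta_s(q)C_s$ with positive barycentric coefficients. If $\Pi(q)$ is a positive multiple of $u$, then $u\in\cone(C_s,\ s\in\rho)$. Since $u$ lies in no cone spanned by $n-1$ columns, $\rho$ must be an $(n-1)$-simplex $\tau$, and then $q$ lies in its relative interior. This also forces the $n$ columns $C_s$, $s\in\tau$, to be linearly independent: otherwise their cone would be a union of cones on $n-1$ of them, by Carathéodory's theorem. Conversely, if $u\in\cone(C_s,\ s\in\tau)$ with $C^\tau$ invertible, then $u=\sum_{s\in\tau}\lambda_sC_s$ with all $\lambda_s>0$, again by the choice of $u$. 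Moreover the point $q$ with barycentric coordinates $\lambda_s/\sum\lambda$ is the unique preimage of the ray in $\tau$, because $\Pi|_\tau$ is an affine isomorphism onto the affine hull of the $C_s$. Near $q$, $\Pi$ is a local diffeomorphism onto a piece of a hyperplane not through $0$, so $\Pi/|\Pi|$ is a local diffeomorphism near $q$. Hence $u/|u|$ is a regular value, and $\deg(\Pi/|\Pi|)$ is the sum of local signs over the simplices $\tau$ appearing in the formula. The step to check carefully is that no preimage lies on a lower-dimensional simplex, which is exactly what the choice of $u$ guarantees.

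\emph{Step 2: compute the local sign.} At $q\in\tau$, orient $\tau$ as part of $\partial P$, with the outward normal first. In barycentric terms the orientation of $\tau$ compatible with $\partial P$ is detected by $\det A^{\tau\cup\{i_\tau\}}$. The point $a_{i_\tau}$ lies on the inner side of the facet, so the ordered basis $(a_{s_2}-a_{s_1},\dots,a_{s_n}-a_{s_1})$ of $\tau$ is positively oriented as a boundary cell iff $(-1)^n\sign\det A^{\tau\cup\{i_\tau\}}=+1$, up to a universal sign convention that must be pinned down. Via the affine map $\Pi|_\tau$, this basis is sent to $(C_{s_2}-C_{s_1},\dots)$. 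Composed with radial projection from $0$, the orientation of $S^{n-1}$ at $u/|u|$ (outward normal $u$ first) compares with $\det(C_{s_1},C_{s_2}-C_{s_1},\dots,C_{s_n}-C_{s_1})=\det C^\tau$, since $\Pi(q)$ is a positive combination and plays the role of the outward radial vector. So the local sign is $(-1)^n\sign\det C^\tau\,\sign\det A^{\tau\cup\{i_\tau\}}$, up to fixing the conventions consistently.

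\emph{Main obstacle: the global sign $(-1)^n$.} I expect the sign bookkeeping in Step 2 to be the main difficulty, in particular justifying the exponent $(-1)^n$. Two facts must be combined consistently. First, the boundary orientation of $\partial P$ is "outward normal first", while putting the extra point $a_{i_\tau}$ last in $A^{\tau\cup\{i_\tau\}}$ amounts to an inward vector placed last. Second, the sphere orientation uses the radial vector. To settle the constant I would calibrate on the simplest example: $n=1$, $P=[a_1,a_2]$ with $\partial P=\{a_2\}-\{a_1\}$. Alternatively I would use the standard simplex with $C_i=$ its vertices recentered at the barycenter, so that $\Pi$ is the identity and the degree is $1$; there I would check that the formula yields $+1$. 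Then I would verify that the local computation is natural, i.e. changes sign correctly under reordering $\tau$ (both determinants change by the same permutation sign) and under changing $i_\tau$ (no change, since the points off the facet are all on one side). This calibration fixes the universal sign.
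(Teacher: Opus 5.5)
Your route is the paper's. You count the signed preimages of the direction $u$. These can only lie in the relative interiors of $(n-1)$-simplices $\tau$ with $C^\tau$ invertible and $u\in\cone(C_s,\ s\in\tau)$, one for each such $\tau$. Your Step~1 is correct, and a bit more explicit than the paper about lower-dimensional simplices and regularity.

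The gap is in Step~2, which is where the lemma has its real content. You give the local sign only ``up to a universal sign convention that must be pinned down'', and you defer the factor $(-1)^n$ to a calibration you do not carry out. No calibration is needed: the conventions you already fixed (outward vector first, for both $\partial P$ and $S^{n-1}$) determine the sign.

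The sphere factor is exact. The differential of $\Pi/|\Pi|$ at $q$ is $D\Pi/|\Pi(q)|$ minus a multiple of $\Pi(q)$. So, relative to the basis $(a_{s_r}-a_{s_1})_{r\ge2}$, the local degree is $\sign\det(\Pi(q),C_{s_2}-C_{s_1},\dots,C_{s_n}-C_{s_1})=\sign\det C^\tau$.

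For the boundary factor, subtract the first column of $A^{\tau\cup\{i\}}$ from the others:
\[
\det A^{\tau\cup\{i\}}=\det(a_{s_2}-a_{s_1},\dots,a_{s_n}-a_{s_1},a_i-a_{s_1})=(-1)^{n-1}\det(a_i-a_{s_1},a_{s_2}-a_{s_1},\dots,a_{s_n}-a_{s_1}).
\]
Since $a_i-a_{s_1}$ points into $P$ transversally to the facet, $-(a_i-a_{s_1})$ is an admissible outward vector. Hence the basis $(a_{s_r}-a_{s_1})_{r\ge2}$ is positive for the boundary orientation if and only if $(-1)^n\sign\det A^{\tau\cup\{i\}}=+1$. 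This is exactly the claim you stated without proof, and multiplying the two factors gives the formula.

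If you still want to calibrate, the case $n=1$ only fixes the constant for $n=1$. More importantly, calibration presupposes that the local sign equals a constant depending only on $n$ times $\sign\det C^\tau\cdot\sign\det A^{\tau\cup\{i_\tau\}}$. Your naturality checks (reordering $\tau$, changing $i_\tau$) do not prove that. The direct computation above does, and it makes calibration superfluous.

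Your simplex example does confirm the sign in every dimension. With $C_i=a_i-b$, adding the other columns to the last one gives $\det A^{\tau\cup\{j\}}=(-1)^n(n+1)\det C^\tau$. So every term of the formula is $+1$, and a generic $u$ selects exactly one facet. This matches the degree $1$ of the radial projection from $b$.
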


\begin{proof}
We compute the degree as the number of preimages of the direction $u$, counted with signs. Let $\tau=\{s_1<\dots<s_n\}$ be a simplex of $T$ of dimension $n-1$, contained in the facet $\Gamma$ of $P$. The restriction of $\Pi$ to $\tau$ is $\Pi(\sum_r\beta_ra_{s_r})=\sum_r\beta_rC_{s_r}$, with $\beta_r\ge0$ and $\sum_r\beta_r=1$. It is a positive multiple of $u$ only if $u\in\cone(C_s,\ s\in\tau)$. If the columns $C_s$, $s\in\tau$, are linearly dependent, then by Carath\'eodory's theorem this cone is the union of the cones spanned by $n-1$ of them. Hence it does not contain $u$, and $\tau$ contributes nothing. Otherwise $C^\tau$ is invertible. Then $\Pi$ is a positive multiple of $u$ at exactly one point of $\tau$ if $u\in\cone(C_s,\ s\in\tau)$, and at no point otherwise. By the choice of $u$, this point lies in the relative interior of $\tau$.

We orient $\tau$ by the basis $(a_{s_2}-a_{s_1},\dots,a_{s_n}-a_{s_1})$ of its direction. Let $q$ be a point such that $\Pi(q)\in\R_{>0}u$. For this orientation of $\tau$, and for the orientation of $S^{n-1}$ as a boundary, the local degree of $\Pi/|\Pi|$ at $q$ is the sign of
\[
\det\big(\Pi(q),\,C_{s_2}-C_{s_1},\dots,C_{s_n}-C_{s_1}\big)=\det\big(C_{s_1},C_{s_2}-C_{s_1},\dots,C_{s_n}-C_{s_1}\big)=\det C^\tau,
\]
where we used $\Pi(q)=C_{s_1}+\sum_{r\ge2}\beta_r(C_{s_r}-C_{s_1})$. It remains to compare this orientation of $\tau$ with its orientation as a part of $\partial\conv(\A)$, for which a basis $(v_2,\dots,v_n)$ is positive if $\det(\nu,v_2,\dots,v_n)>0$ for an outward vector $\nu$. Let $i$ be an index such that $a_i\notin\Gamma$, so that $a_i-a_{s_1}$ points inward. Subtracting the first column from the others, we get
\[
\det A^{\tau\cup\{i\}}=\det\big(a_{s_2}-a_{s_1},\dots,a_{s_n}-a_{s_1},a_i-a_{s_1}\big)=(-1)^{n-1}\det\big(a_i-a_{s_1},a_{s_2}-a_{s_1},\dots,a_{s_n}-a_{s_1}\big).
\]
Therefore the two orientations of $\tau$ differ by the sign $(-1)^{n-1}\cdot(-1)\cdot\sign\det A^{\tau\cup\{i\}}=(-1)^n\sign\det A^{\tau\cup\{i\}}$. Note that this sign does not depend on $i$, since all the points $a_i\notin\Gamma$ lie on the same side of $\Gamma$. Summing the local degrees $(-1)^n\sign\det A^{\tau\cup\{i_\tau\}}\sign\det C^\tau$ over the simplices $\tau$ such that $u\in\cone(C_s,\ s\in\tau)$, we obtain the formula.
\end{proof}

\begin{proof}[Proof of Theorem~\ref{thm:A}]
Assume (U$_A$) and (U$_C$). By Lemma~\ref{lem:N}, (N) holds, and by Proposition~\ref{prop:degree}(1), $\deg F$ is the degree of $\hat F/|\hat F|$ on $\partial P$. By (U$_A$), every proper face $G$ of $P$ is a simplex whose only points in $\A$ are its vertices. Hence the boundary complex $T$ of $P$ is a triangulation of $\partial P$ as in Lemma~\ref{lem:PL}, whose $(n-1)$-simplices are the facets of $P$. For $q$ in the relative interior of $G$, the weights $\hat\pi_i(q)$, $i\in G$, are nonnegative, with sum $1$ and barycenter $q$ (Proposition~\ref{prop:moment}(2)). Thus they are the barycentric coordinates of $q$ in the simplex $G$, and $\hat F=\Pi_T$ on $\partial P$. This map does not vanish by (N), and Lemma~\ref{lem:PL} gives the formula.
\end{proof}

\begin{proof}[Proof of Theorem~\ref{thm:Ap}]
By Lemma~\ref{lem:homotopy}, $\deg F$ is the degree of $\Pi_T/|\Pi_T|$ on $\partial P$, and Lemma~\ref{lem:PL} gives the formula.
\end{proof}

\begin{proposition}\label{prop:ends-newton}
Assume (H), and that neither $e_n$ nor $-e_n$ lies in a cone spanned by $n-1$ columns of $C$. Let $C'$ be the matrix formed by the first $n-1$ rows of $C$. Then, for any triangulation $T$ of $\partial P$ as in Theorem~\ref{thm:Ap},
\[
\deg F=\frac{(-1)^n}2\sum_{\substack{\tau\in T,\ \dim\tau=n-1\\ (C'_s)_{s\in\tau}\ \text{positively span}\ \R^{n-1}}}\sign\det C^\tau\cdot\sign\det A^{\tau\cup\{i_\tau\}} .
\]
By Cramer's rule, $(C'_s)_{s\in\tau}$ positively spans $\R^{n-1}$ if and only if the numbers $(-1)^r\det C'^{\tau\setminus\{s_r\}}$, $r=1,\dots,n$, are nonzero and have the same sign, where $\tau=\{s_1<\dots<s_n\}$.
\end{proposition}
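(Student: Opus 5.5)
The plan is to apply Theorem~\ref{thm:Ap} twice, once with $u=e_n$ and once with $u=-e_n$, and to average the two formulas. By hypothesis, neither $e_n$ nor $-e_n$ lies in a cone spanned by $n-1$ columns of $C$, so both are admissible choices of $u$. Lemmas~\ref{lem:homotopy} and~\ref{lem:PL} therefore give two expressions for $\deg F$. Adding them shows that $2\deg F$ equals $(-1)^n$ times the sum of $\sign\det C^\tau\cdot\sign\det A^{\tau\cup\{i_\tau\}}$ over the $(n-1)$-simplices $\tau$ with $e_n\in\cone(C_s,\ s\in\tau)$ or $-e_n\in\cone(C_s,\ s\in\tau)$, where $\tau$ is counted twice if both hold. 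So the proposition reduces to a purely linear-algebra claim about a single simplex.

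\emph{Claim.} Let $\tau$ be a simplex of $T$ of dimension $n-1$ such that $C^\tau$ is invertible, which is the only case that contributes in Lemma~\ref{lem:PL}. Then exactly one of $\pm e_n$ lies in $\cone(C_s,\ s\in\tau)$ if and only if $(C'_s)_{s\in\tau}$ positively spans $\R^{n-1}$; otherwise neither does.

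To prove the claim, write $\pm e_n=\sum_s\mu^\pm_sC_s$ with $\mu^-=-\mu^+$, which is possible since $C^\tau$ is invertible. Projecting to the first $n-1$ coordinates gives $\sum_s\mu^+_sC'_s=0$. Because $\pm e_n$ avoids every cone on $n-1$ columns, all the $\mu^+_s$ are nonzero. Hence $e_n$ or $-e_n$ lies in the cone exactly when $\mu^+$ has constant sign, and then only one of them does. Now $C'^\tau$ is an $(n-1)\times n$ matrix of rank $n-1$, since $C^\tau$ is invertible, so its kernel is the line spanned by $\mu^+$. By \cite[Proposition~3.2]{BSS}, as recalled in Section~\ref{sec:viro}, $\mu^+$ has constant nonzero sign if and only if $(C'_s)_{s\in\tau}$ positively spans $\R^{n-1}$, which proves the claim.

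The step I expect to be the main obstacle is the simplices $\tau$ with $C^\tau$ singular. For these, $\det C^\tau=0$, so their term vanishes in the stated formula whether or not the columns $C'_s$ positively span. No further case analysis is needed, and the two sums agree term by term.

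Finally, the Cramer's rule reformulation is the recalled criterion of \cite{BSS}. The numbers $(-1)^r\det C'^{\tau\setminus\{s_r\}}$ are, up to a global sign, the coordinates of a kernel vector of $C'^\tau$. Therefore $C'^\tau$ is positively spanning exactly when these numbers are all nonzero and of the same sign.
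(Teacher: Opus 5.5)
Your argument is the paper's: average the formula of Theorem~\ref{thm:Ap} for $u=e_n$ and $u=-e_n$, and observe that a simplex contributes exactly when $C'^\tau$ has a positive kernel vector. Your $\mu^+$ is, up to normalization, the paper's vector $\beta$ of barycentric coordinates of the point $q_\tau$ where the first $n-1$ components of $\Pi_T$ vanish. Disposing of singular $C^\tau$ by noting that their terms vanish on both sides is a valid shortcut for the paper's use of (H) and Carath\'eodory to show $\det C^\tau\neq0$ for positively spanning $\tau$.

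One sentence needs repair: the hypothesis does not imply that all $\mu^+_s$ are nonzero. For $n=3$, take $C_1=(1,0,1)$, $C_2=(1,0,0)$, $C_3=(0,1,0)$. Then $e_3=C_1-C_2$, so $\mu^+=(1,-1,0)$, although neither $e_3$ nor $-e_3$ lies in a cone on two of these columns. What the hypothesis does give is that the entries are nonzero once $\mu^+\ge0$ or $\mu^+\le0$: a zero entry would then put $\pm e_n$ in a cone on $n-1$ columns. That conditional version is all your next step uses, so the proof goes through with this correction.
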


The assumption holds in particular if all the maximal minors of $C'$ are nonzero, since $\det(C^S,e_n)=\pm\det C'^S$ for $|S|=n-1$. It holds after replacing $F$ by $MF$ for a generic $M\in GL_n(\R)$, since a cone spanned by $n-1$ columns of $C$ lies in a proper subspace of $\R^n$, and a generic line through the origin meets such a cone only at the origin. This does not change the positive zeros, and it multiplies $\deg F$ by $\sign\det M$. In other words, the vector $u$ of Theorem~\ref{thm:Ap} can be replaced by a generic change of basis. The same holds on the Gale side (Proposition~\ref{prop:ends-gale}).

\begin{proof}
Let $g=\Pi_T/|\Pi_T|\colon\partial P\to S^{n-1}$, which is defined by Lemma~\ref{lem:homotopy}, and whose degree is $\deg F$. By assumption, Lemma~\ref{lem:PL} applies both with $u=e_n$ and with $u=-e_n$: $\deg g$ is the signed count of the points of $g^{-1}(e_n)$, and also of the points of $g^{-1}(-e_n)$, where a point of an $(n-1)$-simplex $\tau$ is counted with the sign $c_\tau=(-1)^n\sign\det C^\tau\cdot\sign\det A^{\tau\cup\{i_\tau\}}$. Note that this sign does not depend on $u$. Averaging the two counts, we get
\[
\deg F=\frac12\sum_{q\in g^{-1}(\{e_n,-e_n\})}c_{\tau(q)},
\]
where $\tau(q)$ is the simplex containing $q$.

The points of $g^{-1}(\{e_n,-e_n\})$ are the points of $\partial P$ where the first $n-1$ components of $\Pi_T$ vanish. Let us find them in an $(n-1)$-simplex $\tau=\{s_1<\dots<s_n\}$ of $T$. A point $q=\sum_r\beta_ra_{s_r}$ of $\tau$, with $\beta_r\ge0$ and $\sum_r\beta_r=1$, is such a point if and only if $\sum_r\beta_rC'_{s_r}=0$. Assume that $q$ is such a point. Then $\Pi_T(q)=\sum_r\beta_rC_{s_r}$ is a nonzero multiple of $e_n$, by Lemma~\ref{lem:homotopy}. If some $\beta_r$ were zero, $e_n$ or $-e_n$ would lie in a cone spanned by $n-1$ columns of $C$. Hence all the $\beta_r$ are positive. For the same reason, $\det C^\tau\neq0$ by Carath\'eodory's theorem, so the $(n-1)\times n$ matrix $C'^\tau$ has rank $n-1$, and its kernel is a line. Therefore the vectors $C'_s$, $s\in\tau$, positively span $\R^{n-1}$, and $q$ is the only such point of $\tau$. Conversely, assume that the vectors $C'_s$, $s\in\tau$, positively span $\R^{n-1}$, and let $\beta$ be a positive vector of the kernel of $C'^\tau$, normalized so that $\sum_r\beta_r=1$. Then $q=\sum_r\beta_ra_{s_r}$ is a point of $\tau$ where the first $n-1$ components of $\Pi_T$ vanish. Thus $\tau$ contains such a point if and only if the vectors $C'_s$, $s\in\tau$, positively span $\R^{n-1}$, and in this case it contains exactly one, $q_\tau$, which lies in the relative interior of $\tau$. This gives the formula. Finally, by Cramer's rule, the kernel of $C'^\tau$ is spanned by the vector $\big((-1)^r\det C'^{\tau\setminus\{s_r\}}\big)_r$, which gives the last assertion.
\end{proof}

\begin{remark}[The curve $f_1=\dots=f_{n-1}=0$]\label{rem:curve-newton}
Proposition~\ref{prop:ends-newton} says that $\deg F$ is half the signed count of the ends of the curve defined by the first $n-1$ components of $F$, the points $q_\tau$ playing the role of these ends. Let us explain this. Assume (U$_A$) and (U$_C$), take for $T$ the boundary complex of $P$, so that $\hat F=\Pi_T$ on $\partial P$, and assume that the positive zeros of $F$ are nondegenerate. After replacing $F$ by $MF$ for a generic $M\in GL_n(\R)$, the set $Z=\{x\in\R^n_{>0}:\ f_1(x)=\dots=f_{n-1}(x)=0\}$ is a smooth curve; indeed $Z=F^{-1}(L)$ for the line $L=M^{-1}(\R e_n)$, whose direction is a regular value of $F/|F|$ for generic $M$ by Sard's theorem, and $F$ is a local diffeomorphism near its zeros. We orient $Z$ by the vector field $\theta$ defined by $\langle\theta,w\rangle=\det(Df_1;\dots;Df_{n-1};w)$. Then at a positive zero $x$ of $F$ we have $\det DF(x)=\langle\theta,\nabla f_n(x)\rangle$, so that the Jacobian sign of $F$ at $x$ is the sign of the derivative of $f_n$ along $Z$. Hence the Jacobian signs alternate along each component of $Z$. A compact component contributes $0$ to $\deg F$. A noncompact component contributes $(\sigma_+-\sigma_-)/2\in\{-1,0,1\}$, where $\sigma_+$ and $\sigma_-$ are the signs of $f_n$ near its outgoing and incoming ends. In particular it contributes $\pm1$ if and only if $f_n$ takes opposite signs near its two ends.

Since $\hat F$ does not vanish on $\partial P$, the ends of $Z$ accumulate, through the moment map, on the points of $\partial P$ where the first $n-1$ components of $\hat F=\Pi_T$ vanish, that is, on the points $q_\tau$. Near $q_\tau$, the sign of $f_n$ is the sign of the last coordinate of $\Pi_T(q_\tau)$, that is, it is $+$ if $q_\tau\in g^{-1}(e_n)$ and $-$ if $q_\tau\in g^{-1}(-e_n)$, where $g=\Pi_T/|\Pi_T|$ is as in the proof of Proposition~\ref{prop:ends-newton}. Using the implicit function theorem for the facial system $F_\tau$ near $q_\tau$, one sees that exactly one end of $Z$ converges to each point $q_\tau$, and that the contribution of this end, $\sigma_+/2$ for an outgoing end and $-\sigma_-/2$ for an incoming one, is $c_\tau/2$. In particular, if $f_n$ takes opposite signs at the two ends of a component, these ends converge to two points $q_\tau$ and $q_{\tau'}$, one in $g^{-1}(e_n)$ and the other in $g^{-1}(-e_n)$, and the contribution $\pm1$ of the component is $c_\tau=c_{\tau'}$. If $f_n$ has the same sign at both ends, then $c_\tau=-c_{\tau'}$, and the component contributes $0$. The analogous description on the Gale side, for the curve defined in $\Delta$ by the first $k-1$ equations of the Gale system, is given after Proposition~\ref{prop:ends-gale}.
\end{remark}

\begin{proof}[Proof of Theorem~\ref{thm:comb}]
If $C$ has rank less than $n$, then $\Pi_T$ takes its values in a hyperplane. Hence $\Pi_T/|\Pi_T|$ is not onto, and $\deg F=0$ by Lemma~\ref{lem:homotopy}. Assume now that $C$ has rank $n$, and for a real number $\eta>0$ put $u_\eta=C_{j_1}+\eta C_{j_2}+\dots+\eta^{n-1}C_{j_n}$. A proper subspace of $\R^n$ spanned by columns of $C$ contains $u_\eta$ for at most finitely many $\eta$, since otherwise it would contain the basis $C_{j_r}$. Therefore, for $\eta>0$ small, $u_\eta$ lies in no cone spanned by $n-1$ columns, and Theorem~\ref{thm:Ap} applies with $u=u_\eta$. If the columns $C_s$, $s\in\tau$, are linearly dependent, then $u_\eta\notin\cone(C_s,\ s\in\tau)$. Otherwise $u_\eta=\sum_{s\in\tau}\lambda_s(\eta)C_s$ with $\lambda_s(\eta)=\sum_r\eta^{r-1}\mu^\tau_{s,r}$. This polynomial in $\eta$ is not identically zero, since $\mu^\tau_{s,1},\dots,\mu^\tau_{s,n}$ are the coordinates on $C_s$ of a basis of $\R^n$. For $\eta$ small, its sign is the sign of its first nonzero coefficient. Thus $u_\eta\in\cone(C_s,\ s\in\tau)$ if and only if $\tau$ is selected, and Theorem~\ref{thm:Ap} gives the formula.

For the last assertion, we fix $T$. The faces of $P$ and the triangulations of a point configuration are determined by its oriented matroid \cite[Section~4.1]{DLRS}. Hence $T$ is a triangulation as in Theorem~\ref{thm:Ap} for every configuration with the same oriented matroid as $\A$. Moreover, the rank of $C$, the selected sets and the signs in the formula are determined by the signs of the maximal minors of $A$ and $C$.
\end{proof}

\begin{remark}\label{rem:comb}
A canonical choice of $T$ is the pulling triangulation \cite[Section~4.3]{DLRS}. If $v(G)$ denotes the smallest index of a vertex of a face $G$, its $(n-1)$-simplices are the sets $\{v(G_0),\dots,v(G_{n-1})\}$ for the flags of faces $G_0\subset\dots\subset G_{n-1}$ with $\dim G_k=k$ and $v(G_k)\notin G_{k-1}$. By Proposition~\ref{lem:vertex}, one may also replace $F$ by $F_V$ and use a triangulation with vertices among the vertices of $P$. When $P$ is simplicial, $T$ can be the boundary complex of $P$. Then Theorem~\ref{thm:comb} is the formula of Theorem~\ref{thm:A}, with the condition $u\in\cone(C_s,\ s\in\Gamma)$ replaced by the condition that $\Gamma$ is selected. In the same way, on the Gale side, Theorem~\ref{thm:Bp} holds with $e=-(B_{i_1}+\eta B_{i_2}+\dots+\eta^{k-1}B_{i_k})$, where $B_{i_1},\dots,B_{i_k}$ is the first basis formed by rows of $B$. The condition $e\in\cone(-B_s,\ s\in S)$ then becomes the same lexicographic condition on the coordinates of the $B_{i_r}$ in the basis $(B_s)_{s\in S}$. Note that no triangulation is needed there, since the vertices of $\overline\Delta$ are canonical. This vector $e$ is $-B^Th$ for the lexicographic heights $h_{i_r}=\eta^{r-1}$, $h_i=0$ otherwise.
\end{remark}

\begin{theorem}[Oriented matroid invariance]\label{cor:om}
Assume (H). Then $\deg F$ only depends on the oriented matroids of the columns of $A$ and of $C$ indexed by the vertices of $P$.
\end{theorem}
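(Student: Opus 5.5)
The plan is to combine the vertex reduction with the combinatorial formula. By Proposition~\ref{lem:vertex}, under (H) the truncation $F_V$ satisfies (H) and $\deg F=\deg F_V$. The Newton polytope of $F_V$ is still $P$, and its exponent configuration is $\{a_i\}_{i\in V}$, which still affinely spans $\R^n$. Its coefficient matrix is the submatrix $C^V$, and its homogenized exponent matrix is $A^V$. So it suffices to show that $\deg F_V$ depends only on the oriented matroids of $A^V$ and $C^V$.

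For this I will apply Theorem~\ref{thm:comb} to $F_V$. First, if $C^V$ has rank less than $n$, then $\deg F_V=0$. The rank is read off the oriented matroid, since it is the size of a basis, that is, of a set with a nonzero maximal minor. Otherwise, fix a triangulation $T$ of $\partial P$ with vertices in $\{a_i\}_{i\in V}$ such that every face of $P$ is a union of simplices of $T$. The pulling triangulation of Remark~\ref{rem:comb} is a canonical choice.

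The data entering the formula are the following:
\begin{itemize}
\item the $(n-1)$-simplices $\tau$ of $T$;
\item the signs $\sign\det A^{\tau\cup\{i_\tau\}}$;
\item the set of selected simplices;
\item the signs $\sign\det C^\tau$.
\end{itemize}
The last three are signs of maximal minors of $A^V$ or $C^V$, or are determined by them. In particular, being selected is defined through the first basis $C_{j_1},\dots,C_{j_n}$ and Cramer ratios $\det C^\tau_{s\leftarrow r}/\det C^\tau$. Both the choice of this first basis and the signs of these ratios are given by maximal minors of $C^V$. The index $i_\tau$ can be taken among vertices off the facet containing $\tau$, and that facet is also combinatorial.

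The step needing care is that $T$ itself is determined by the oriented matroid of $A^V$. Concretely, if $A'$ is another exponent configuration on $V$ with the same oriented matroid, then:
\begin{itemize}
\item the face lattice of its convex hull, as subsets of $V$, is the same, since faces are characterized by nonnegative covectors;
\item the same collection $T$ of index sets triangulates its boundary compatibly with the faces.
\end{itemize}
I will cite \cite[Section~4.1]{DLRS}, as in the proof of Theorem~\ref{cor:om}'s predecessor Theorem~\ref{thm:comb}. Alternatively, I will take the pulling triangulation, whose simplices are described purely in terms of flags of faces and indices, by Remark~\ref{rem:comb}. This makes the independence of the choice explicit.

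Finally, (H) for the other data is automatic. It is a condition on the oriented matroids, since faces are combinatorial and the half-space property is the absence of nonnegative vectors. Moreover, in $F_V$ every face point is a vertex. Hence $F$ and $F'$ with the same oriented matroids on $V$ both satisfy (H), and they have equal degrees.
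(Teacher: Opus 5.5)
Your proposal is correct and follows the paper's proof: reduce to $F_V$ by Proposition~\ref{lem:vertex}, then apply Theorem~\ref{thm:comb} to $F_V$. Your itemized check that the triangulation, the selected simplices and the signs are oriented matroid data is exactly the justification the paper gives for the last assertion of Theorem~\ref{thm:comb}. One small tightening: in your last paragraph, what is automatic is (H) for $F'_V$, not for $F'$ itself, since extra non-vertex monomials on faces could break it. This costs nothing, because (H) for $F'$ is already part of the hypothesis.
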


\begin{proof}
By Proposition~\ref{lem:vertex}, $F_V$ satisfies (H) and $\deg F=\deg F_V$. By Theorem~\ref{thm:comb} applied to $F_V$, $\deg F_V$ only depends on the oriented matroids of the columns of $A$ and of $C$ indexed by $V$.
\end{proof}

\subsection{One or two variables}\label{sec:onetwo}
For $n=1$, we write $a_1<\dots<a_N$. The facets of $\conv(\A)$ are $\{1\}$ and $\{N\}$, with $\varepsilon_{\{1\}}=1$ and $\varepsilon_{\{N\}}=-1$. Theorem~\ref{thm:A} with $u=1$ gives $\deg F=[c_N>0]-[c_1>0]$. In other words, the signed number of positive roots of $\sum_ic_ix^{a_i}$ is $0$ if $c_1c_N>0$, and $\pm1$ otherwise.

\begin{corollary}\label{cor:n2}
Let $n=2$, assume (H), and let $j_1,\dots,j_m$ be the indices of the points of $\A$ on the boundary of the Newton polygon $P$, in counterclockwise order. Then the closed polygon $C_{j_1}C_{j_2}\cdots C_{j_m}$ does not pass through the origin, and $\deg F$ is its winding number around the origin. The same holds for the closed polygon of the columns indexed by the vertices of $P$ only; under (U$_A$), these are all the points of $\A$ on $\partial P$.
\end{corollary}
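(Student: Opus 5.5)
The plan is to apply Lemma~\ref{lem:homotopy} with $T$ the triangulation of $\partial P$ whose vertices are all the boundary points $a_{j_1},\dots,a_{j_m}$, taken in counterclockwise order. This $T$ is the cyclic sequence of segments $[a_{j_r},a_{j_{r+1}}]$. Every proper face of $P$, whether an edge or a vertex, is a union of simplices of $T$. Indeed, each edge of $P$ contains the boundary points lying on it consecutively, and each vertex of $P$ is one of the $a_{j_r}$. Lemma~\ref{lem:homotopy} then says that $\Pi_T$ does not vanish on $\partial P$ and that $\deg F=\deg(\Pi_T/|\Pi_T|)$, where $\partial P$ is oriented as the boundary of $P$.

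Next I would identify this map with the polygon. Since $\Pi_T$ is affine on each segment and sends $a_{j_r}$ to $C_{j_r}$, its image is exactly the closed polygon $C_{j_1}C_{j_2}\cdots C_{j_m}$. The map is a parametrization of this polygon, traversed in the order of $\partial P$. Nonvanishing of $\Pi_T$ means that the polygon avoids the origin. For $n=2$, the boundary orientation of a polygon is the counterclockwise orientation, so $\partial P$ is traversed as $a_{j_1},a_{j_2},\dots$. The degree of a map from this oriented circle to $S^1$ is then the winding number of the image loop around $0$. The one point needing care is that the orientation conventions match: the boundary orientation of $P$ in $\R^2$ is counterclockwise, and $S^1$ carries its standard orientation as the boundary of the disk. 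This is the same convention used in Lemma~\ref{lem:PL}, so no extra sign appears.

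For the vertex version, I would use Proposition~\ref{lem:vertex}. It shows that $F_V$ satisfies (H) and $\deg F=\deg F_V$, and $F_V$ has the same Newton polygon $P$. Applying the first part to $F_V$, whose boundary points of $\A$ are exactly the vertices of $P$ in counterclockwise order, gives the statement for the polygon of vertex columns. The polygon avoids the origin by Lemma~\ref{lem:homotopy} applied to $F_V$. Under (U$_A$), no three points of $\A$ are collinear, so the only points of $\A$ on $\partial P$ are its vertices.

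I expect the main obstacle to be only bookkeeping, namely checking that $T$ satisfies the hypothesis of Lemma~\ref{lem:homotopy} and fixing orientations. The latter can be cross-checked against the formula of Lemma~\ref{lem:PL}. For an edge $\tau=\{s_1<s_2\}$, the local contribution $(-1)^2\sign\det C^\tau\,\sign\det A^{\tau\cup\{i\}}$ is exactly the signed crossing of the ray $\R_{>0}u$ by the segment $C_{s_1}C_{s_2}$ when that segment is traversed counterclockwise. Summing these crossings gives the winding number.
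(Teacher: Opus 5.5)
Your proof is correct. It uses the same triangulation $T$ of $\partial P$ by consecutive boundary points as the paper, but it finishes differently.

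The paper applies Theorem~\ref{thm:Ap}, that is, Lemma~\ref{lem:homotopy} followed by the preimage count of Lemma~\ref{lem:PL}. It checks that each term $\sign\det C^{\tau_l}\cdot\sign\det A^{\tau_l\cup\{i\}}$ equals $\sign\det(C_{j_l},C_{j_{l+1}})$, in both cases $j_l<j_{l+1}$ and $j_l>j_{l+1}$. It then recognizes the resulting signed count of crossings of the ray $\R_{>0}u$ as the winding number. The paper also gets the avoidance of the origin directly from (H) on each edge.

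You stop after Lemma~\ref{lem:homotopy}. You observe that $\Pi_T$ on $\partial P$, oriented counterclockwise, is a piecewise linear parametrization of the closed polygon. So the degree of $\Pi_T/|\Pi_T|$ is its winding number by definition. This is shorter and avoids the index-order bookkeeping. The paper's version has the merit of exhibiting the corollary as the case $n=2$ of Theorem~\ref{thm:Ap}, and your last paragraph is exactly that check.

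For the vertex statement, the paper simply reruns the argument with the triangulation of $\partial P$ by the edges of $P$. That triangulation is already admissible in Lemma~\ref{lem:homotopy}, so the vertex reduction is not needed. Your detour through Proposition~\ref{lem:vertex} is also valid.
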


\begin{proof}
Put $j_{m+1}=j_1$. By (H), for every edge $G$ of $P$ the columns $C_i$, $i\in G$, lie in an open half-plane. Hence no segment $[C_{j_l},C_{j_{l+1}}]$ contains the origin. Let $T$ be the triangulation of $\partial P$ whose $1$-simplices are the segments $\tau_l=\{j_l,j_{l+1}\}$; every edge of $P$ is a union of them. If $j_l<j_{l+1}$, a point $a_i$ off the edge containing $\tau_l$ lies to the left of the oriented segment from $a_{j_l}$ to $a_{j_{l+1}}$, so $\sign\det A^{\tau_l\cup\{i\}}=+1$. If $j_l>j_{l+1}$, the columns of $\tau_l$ are taken in the other order, and this sign is $-1$. In both cases, $\sign\det C^{\tau_l}\cdot\sign\det A^{\tau_l\cup\{i\}}=\sign\det(C_{j_l},C_{j_{l+1}})$. Let $u$ be as in Theorem~\ref{thm:A}, that is, $u$ does not lie on any ray $\R_{\ge0}C_i$. If $\det(C_{j_l},C_{j_{l+1}})=0$, the two vectors point in the same direction. Then $u\notin\cone(C_{j_l},C_{j_{l+1}})$, and the segment $[C_{j_l},C_{j_{l+1}}]$ does not meet the ray $\R_{>0}u$. Otherwise, $u\in\cone(C_{j_l},C_{j_{l+1}})$ if and only if this segment crosses the ray $\R_{>0}u$. Therefore, by Theorem~\ref{thm:Ap} with $(-1)^n=1$, $\deg F$ is the number of segments $[C_{j_l},C_{j_{l+1}}]$ which cross the ray $\R_{>0}u$, counted with the sign of $\det(C_{j_l},C_{j_{l+1}})$. This is the winding number. The same argument, with the triangulation of $\partial P$ by the edges of $P$, gives the assertion for the vertices.
\end{proof}

\begin{example}[Example~\ref{ex:run}, continued: Theorems~\ref{thm:A} and~\ref{thm:comb}]\label{ex:run-A}
The facets of $P$ are the five edges. For each of them we list $\det C^\Gamma$ and $\varepsilon_\Gamma$:
\[
\begin{array}{c|ccccc}
\Gamma&\{1,2\}&\{2,3\}&\{3,4\}&\{4,5\}&\{1,5\}\\ \hline
\det C^\Gamma&6&7&6&7&-6\\
\varepsilon_\Gamma&+1&+1&+1&+1&-1
\end{array}
\]
Thus every term of Theorem~\ref{thm:A} is $+1$, and $\deg F$ is the number of edges $\Gamma$ such that $u\in\cone(C_s,\ s\in\Gamma)$. This number is $2$ for every admissible $u$. For instance, for $u=(0,1)$ the selected edges are $\{1,2\}$ and $\{4,5\}$, for $u=(-1,-1)$ they are $\{2,3\}$ and $\{1,5\}$, and for $u=(1,1)$ they are $\{1,2\}$ and $\{3,4\}$. In Theorem~\ref{thm:comb}, the first basis formed by columns of $C$ is $C_1,C_2$, and $u_\eta=C_1+\eta C_2$ selects $\{1,2\}$ and $\{3,4\}$. In Proposition~\ref{prop:ends-newton}, $C'=(3,-3,1,1,-3)$ is the first row of $C$, whose entries are nonzero. An edge contributes if and only if its two entries have opposite signs. These edges are $\{1,2\}$, $\{2,3\}$, $\{4,5\}$ and $\{1,5\}$, and we get $\deg F=\frac12\cdot4=2$. Finally, the closed polygon $C_1C_2C_3C_4C_5$ turns twice around the origin, as predicted by Corollary~\ref{cor:n2}.
\end{example}

\subsection{The Viro formula by a piecewise linear argument}
Let $h\in\R^N$ be generic, and put $\Pi_h=\Pi_{T_h}\colon P\to\R^n$. The following proposition gives the degree formula of Theorem~\ref{thm:viro} without using the zeros of the Viro maps. The idea is that the degree of $F$ is the degree of $\Pi_h$, which can be read on $\partial P$ through Lemma~\ref{lem:homotopy}, and in the interior of $P$ as a sum over the simplices of $T_h$.

\begin{proposition}\label{prop:viro-pl}
Assume (U$_C$) and (H), and let $h\in\R^N$ be generic. Then
\[
\deg F=\deg(\Pi_h,P^\circ,0)=\sum_{\substack{\sigma\in T_h\\ \sigma\ \text{positively decorated}}}s(\sigma).
\]
\end{proposition}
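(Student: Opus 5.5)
The plan has two halves. First, identify $\deg F$ with the degree of $\Pi_h$ on the interior of $P$. Second, compute that degree as a sum of local contributions, one for each simplex of $T_h$.

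For the first equality, I would note that the maximal simplices of $T_h$ have vertices in $\A$. Moreover, since $T_h$ is a triangulation of $P$ induced by heights, every face of $P$ is a union of simplices of $T_h$: the restriction of a regular subdivision to a face is the regular subdivision of that face. So Lemma~\ref{lem:homotopy} applies to $T=T_h$. It gives that $\Pi_h$ does not vanish on $\partial P$ and that $\deg F$ is the degree of $\Pi_h/|\Pi_h|$ on $\partial P$. By the Kronecker--Poincar\'e--Hopf theorem, used exactly as in the proof of Proposition~\ref{prop:degree}(1) with $P$ a ball, this boundary degree equals $\deg(\Pi_h,P^\circ,0)$.

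For the second equality, I would locate the zeros of $\Pi_h$. On a maximal simplex $\sigma$, a point $q=\sum_{l\in\sigma}\beta_la_l$ with $\beta\ge0$ and $\sum\beta_l=1$ satisfies $\Pi_h(q)=C^\sigma\beta$. So $q$ is a zero if and only if $\beta\in\ker C^\sigma$. By (U$_C$), $C^\sigma$ has rank $n$, so its kernel is a line. A zero with all $\beta_l>0$ exists if and only if this line is spanned by a positive vector, that is, if and only if $\sigma$ is positively decorated, and then the zero is unique.

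I must also rule out zeros on lower-dimensional simplices. A zero there would give a nonnegative nonzero kernel vector supported on at most $n$ columns. These columns are independent by (U$_C$), so this is impossible. In particular the zeros are isolated, lie in open maximal simplices, and are finite in number. Then $\deg(\Pi_h,P^\circ,0)$ is the sum of the local degrees.

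At the zero $q_\sigma$, $\Pi_h$ is affine near $q_\sigma$, so the local degree is the sign of the determinant of its linear part. In barycentric terms, $\Pi_h$ on $\sigma$ is the composite of the inverse of the affine map $\beta\mapsto\sum\beta_la_l$ with the map $\beta\mapsto\sum\beta_lC_l$, both on the affine hyperplane $\sum\beta_l=1$. Lifting to $\R^{n+1}$, the first map corresponds to $A^\sigma$ and the second to $\widehat C^\sigma$: both send $\beta$ to $(\sum\beta_l,\cdot)$, and the first coordinate is preserved. So the linear part of $\Pi_h$ on $\sigma$ has determinant $\det\widehat C^\sigma/\det A^\sigma$, whose sign is $s(\sigma)$.

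The main obstacle is making this last step clean: checking that the determinant of the restriction to the hyperplane equals the ratio of the $(n+1)\times(n+1)$ determinants, with the same column ordering in both. I would do it explicitly by writing $\Pi_h(a_{l_0}+\sum_j y_j(a_{l_j}-a_{l_0}))=C_{l_0}+\sum_jy_j(C_{l_j}-C_{l_0})$. Its differential is $(C_{l_j}-C_{l_0})_j\cdot M^{-1}$, with $M$ the matrix of the $a_{l_j}-a_{l_0}$. Column operations give $\det(C_{l_j}-C_{l_0})_j=\det\widehat C^\sigma$ and $\det M=\det A^\sigma$. Summing these local degrees over the positively decorated simplices gives the formula.
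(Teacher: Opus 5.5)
Your proposal is correct and follows the paper's proof essentially step for step. You use Lemma~\ref{lem:homotopy} together with Kronecker--Poincar\'e--Hopf for the first equality, (U$_C$) to exclude zeros on lower-dimensional simplices, uniqueness of the zero in each positively decorated maximal simplex, and the same column operations giving the local degree $\sign\big(\det(C_{l_j}-C_{l_0})_j\cdot\det(a_{l_j}-a_{l_0})_j\big)=s(\sigma)$. The nonvanishing of $\det\widehat C^\sigma$ for positively decorated $\sigma$ is left implicit, as in the paper; it holds because the terms of its first-row expansion are nonzero and of one sign.
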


\begin{proof}
Every face of $P$ is a union of simplices of $T_h$. Hence, by Lemma~\ref{lem:homotopy}, $\Pi_h$ does not vanish on $\partial P$, and $\deg F$ is the degree of $\Pi_h/|\Pi_h|$ on $\partial P$. As in the proof of Proposition~\ref{prop:degree}, this is $\deg(\Pi_h,P^\circ,0)$ by the Kronecker--Poincar\'e--Hopf index theorem.

On a simplex of $T_h$ of dimension less than $n$, $\Pi_h$ is a nonnegative nonzero combination of at most $n$ columns of $C$. These columns are linearly independent by (U$_C$), so $\Pi_h$ does not vanish there. On the simplex of $\sigma=\{l_0<\dots<l_n\}\in T_h$, $\Pi_h$ is affine with image $\conv(C_l,\ l\in\sigma)$. Therefore it vanishes at an interior point if and only if $\sigma$ is positively decorated, and then at exactly one point. The local degree at this point is the sign of the determinant of the linear part of $\Pi_h$, which sends $a_{l_j}-a_{l_0}$ to $C_{l_j}-C_{l_0}$:
\[
\det\big(C_{l_j}-C_{l_0}\big)_{j=1}^n\cdot\det\big(a_{l_j}-a_{l_0}\big)_{j=1}^n=\det\widehat C^\sigma\cdot\det A^\sigma ,
\]
as one sees by subtracting the first column from the others in both matrices. Thus the local degree is $s(\sigma)$, and the formula follows.
\end{proof}

In particular, the signed count of the positively decorated simplices of $T_h$ does not depend on $h$. Indeed, it is the degree of a piecewise linear map whose restriction to $\partial P$ is homotopic to $\hat F$. Under (U$_A$), every proper face of $P$ is a simplex whose only points in $\A$ are its vertices, so $\Pi_h=\hat F$ on $\partial P$, as in the proof of Theorem~\ref{thm:A}.

\section{Theorem~\ref{thm:A} as a Viro formula}\label{sec:coning}

In this section we show that the sum of Theorem~\ref{thm:A} is itself a Viro sum. More precisely, each of its terms is the sign $s(\sigma)$ of a simplex of a coning triangulation, obtained by adding to $\A$ an interior point decorated by $-u$. This gives a second proof of Theorem~\ref{thm:A}, and it explains the sign $(-1)^n$.

Assume (U$_A$) and (U$_C$). Let $u\in\R^n$ be a vector which does not lie in any linear hyperplane spanned by $n-1$ columns of $C$. In particular, $u$ does not lie in any cone spanned by $n-1$ columns. We choose a point $c$ in the interior of $P$ which does not lie on any affine hyperplane spanned by $n$ points of $\A$. We put $a_{N+1}=c$, $\A'=\A\cup\{c\}$ and $C_{N+1}=-u$, we let $C'=(C_1,\dots,C_N,C_{N+1})$ and $A'$ be the corresponding matrices, and we consider
\[
F'(x)=\sum_{i=1}^{N+1}C_i\,x^{a_i}=F(x)-u\,x^{c}.
\]

\begin{lemma}\label{lem:coning1}
The pair $(\A',C')$ satisfies (U$_A$) and (U$_C$), its Newton polytope is $P$, and $\deg F'=\deg F$.
\end{lemma}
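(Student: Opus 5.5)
The plan has three parts: check the two genericity conditions, identify the Newton polytope, and compare the degrees through the boundary formula of Proposition~\ref{prop:degree}.

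\emph{Genericity.} For (U$_A$), take any $n+1$ points of $\A'$. If none of them is $c$, they are affinely independent by (U$_A$) for $\A$. Otherwise they are $c$ together with $n$ points of $\A$. These $n$ points are affinely independent and span an affine hyperplane, and $c$ was chosen off every such hyperplane, so the $n+1$ points are again affinely independent.

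For (U$_C$), take any $n$ columns of $C'$. If $C_{N+1}=-u$ is not among them, they are independent by (U$_C$). Otherwise they are $-u$ together with $n-1$ columns of $C$. These $n-1$ columns are independent and span a linear hyperplane, and $u$ lies in no such hyperplane by assumption, so all $n$ columns are independent.

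\emph{Newton polytope and degree.} Since $c$ lies in the interior of $P$, we have $\conv(\A')=P$. The proper faces of $P$, together with the points of $\A'$ on them, are therefore unchanged: each proper face contains only points of $\A$. Both pairs satisfy (U$_A$) and (U$_C$), so (N) holds for each of them by Lemma~\ref{lem:N}(2), and both degrees are defined.

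By Proposition~\ref{prop:degree}(1), each degree is the degree of the normalized compactified map on $\partial P$. It therefore suffices to show that $\hat F'=\hat F$ on $\partial P$.

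Let $q$ be a point in the relative interior of a proper face $G$. By Proposition~\ref{prop:moment}(2), applied to $\A'$, the weight $\hat\pi'_{N+1}(q)$ vanishes, since $N+1\notin G$. The remaining weights $(\hat\pi'_i(q))_{i\in G}$ equal $\pi^G(y)$ with $\mu_G(y)=q$, which is the same data as for $\A$. Hence $\hat F'(q)=\sum_{i\in G}\hat\pi_i(q)C_i=\hat F(q)$. The two maps agree on $\partial P$, so their normalizations have the same degree, and $\deg F'=\deg F$.

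Alternatively, one can deform along $F_s=F-s\,u\,x^c$ for $s\in[0,1]$. The facial truncations do not depend on $s$, so (N) holds throughout and Proposition~\ref{prop:degree}(3) gives the same conclusion.

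I expect the only delicate point to be the face identification for the weights $\hat\pi'$, namely that the moment map data on proper faces is literally the same for $\A$ and $\A'$. This follows directly from Proposition~\ref{prop:moment}(2), because the index set of each proper face is unchanged.
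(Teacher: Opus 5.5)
Your proof is correct, and the genericity checks and the observation that $\conv(\A')=P$ with unchanged facial index sets are exactly the paper's. For the degree, the paper uses what you list as your alternative: along the family $F-\theta u\,x^c$, $\theta\in[0,1]$, the facial truncations stay the same, so (N) holds throughout and Proposition~\ref{prop:degree}(3) applies. Your main argument is an equally short variant. It shows directly, via Proposition~\ref{prop:moment}(2), that $\hat F'=\hat F$ on $\partial P$, and then concludes with Proposition~\ref{prop:degree}(1). This avoids the homotopy, and also the paper's remark that $\deg F$ is the same whether $F$ is viewed with exponent set $\A$ or $\A'$.
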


\begin{proof}
A maximal minor of $A'$ either is a maximal minor of $A$, or involves $c$ and $n$ points of $\A$. In both cases it is nonzero, by (U$_A$) and by the choice of $c$. A maximal minor of $C'$ either is a maximal minor of $C$, or is $\pm\det(C_{t_1},\dots,C_{t_{n-1}},u)$, which is nonzero by the choice of $u$. Since $c$ lies in the interior of $P$, we have $\conv(\A')=P$, and $\A'$ and $\A$ have the same points on each proper face of $P$. Hence, for $\theta\in[0,1]$, the maps $F-\theta u\,x^c$ have the same truncations to the proper faces of $P$ as $F$, and they satisfy (N) by Lemma~\ref{lem:N}(2). By Proposition~\ref{prop:degree}(3), applied to the configuration $\A'$ and the coefficient matrices $(C_1,\dots,C_N,-\theta u)$, $\theta\in[0,1]$, we get $\deg F'=\deg F$. Note that for $\theta=0$ the map is $F$, and by Definition~\ref{def:deg} its degree does not depend on whether we view it with the exponent set $\A$ or $\A'$.
\end{proof}

\begin{lemma}\label{lem:coning2}
Let $h\in\R^{N+1}$ with $h_i=0$ for $i\le N$ and $h_{N+1}=-1$. Then $h$ is generic for $\A'$, and the simplices of $T_h$ are the sets $\sigma_\Gamma=\Gamma\cup\{N+1\}$, where $\Gamma$ runs over the facets of $P$.
\end{lemma}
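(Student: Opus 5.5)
The plan is to produce, for each facet $\Gamma$ of $P$, an affine function $\lambda$ that certifies $\sigma_\Gamma$ as a lower facet of the lifted configuration. Then I will check that the simplices $\sigma_\Gamma$ exhaust $T_h$ and that every lower facet contains exactly $n+1$ lifted points.

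\emph{Lower facets.} By (U$_A$), each facet $\Gamma$ contains exactly $n$ points of $\A$, and these are affinely independent. Let $\ell_\Gamma$ be the affine function that vanishes on $\Gamma$ and takes the value $1$ at $c$. Since $c$ is interior, $\ell_\Gamma>0$ on $P\setminus\Gamma$, and in particular $\ell_\Gamma(a_i)>0$ for $a_i\notin\Gamma$. Put $\lambda_\Gamma=-\ell_\Gamma$. Then:
\begin{itemize}
\item $\lambda_\Gamma(a_i)=0=h_i$ for $i\in\Gamma$;
\item $\lambda_\Gamma(c)=-1=h_{N+1}$;
\item $\lambda_\Gamma(a_i)<0=h_i$ for $i\notin\Gamma$, $i\le N$.
\end{itemize}
So $\sigma_\Gamma$ is a maximal simplex of $T_h$ in the sense of the definition. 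It has $n+1$ affinely independent points, because $c$ lies off the hyperplane spanned by $\Gamma$.

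\emph{No other simplices, and genericity.} Let $\lambda$ be an affine function that defines a lower facet, so $\lambda\le h$ on $\A'$ with equality on the facet's point set $\sigma$.
\begin{itemize}
\item \textbf{$c\in\sigma$.} If $c\notin\sigma$, then $\lambda(c)<-1$ while $\lambda\le 0$ on $\A$. Since $c$ is a convex combination of points of $\A$ with positive weights (it is interior), we may then raise $\lambda$ to $\lambda+\varepsilon$ on the convex side. Put differently, the lower hull is the graph of the convex function $g$ with $g(c)=-1$ and $g=0$ on $\partial P$, obtained by coning. Lower facets of this graph come from facets of $P$ coned to $c$.
\item \textbf{Identifying the facet.} Directly: $c\in\sigma$ gives $\lambda(c)=-1$, and $\lambda\le0$ on $\A$ implies $\lambda\le 0$ on $P$. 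The set $\{\lambda=0\}\cap P$ is then a face of $P$, and it contains the $n$ affinely independent points of $\sigma\setminus\{c\}$. It is therefore a facet $\Gamma$, so $\lambda$ is determined and equals $\lambda_\Gamma$.
\end{itemize}
This shows that each lower facet has exactly the $n+1$ points of $\sigma_\Gamma$, so $h$ is generic.

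\emph{Covering.} The cones $\conv(\Gamma\cup\{c\})$ cover $P$, because every point of $P$ lies on a segment from $c$ to a point of $\partial P$. Hence these simplices form the triangulation.

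The only delicate step is arguing that a lower facet must contain $c$ and meet $\partial P$ in a full facet. I would handle it exactly as above: a lower facet has $n+1$ affinely independent points, at most $n$ of which can lie in $\{\lambda=0\}$, and this forces $c\in\sigma$. If $c\notin\sigma$, then $\sigma\subset\A$ would lie in $\{\lambda=0\}$, contradicting affine independence, since all of $\A$ would give $\lambda\le0$ with equality on $n+1$ points spanning $\R^n$, forcing $\lambda\equiv0$ and contradicting $\lambda(c)\le -1$.
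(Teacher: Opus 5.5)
Your proof is correct. The certificate is the same as the paper's: the paper uses $-\ell_\Gamma/\ell_\Gamma(c)$, which is your $-\ell_\Gamma$ once you normalize $\ell_\Gamma(c)=1$. Where you differ is in showing that there are no other simplices.

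\begin{itemize}
\item The paper only observes that the simplices $\conv(\Gamma\cup\{c\})$ already cover $P$. Since the projections of the lower facets of a lifted configuration subdivide $P$, no other lower facet fits, and genericity follows.
\item You classify every lower facet directly. Its affine function must take the value $-1$ at $c$, since otherwise it would vanish on an affinely spanning subset of $\A$. Then $\{\lambda=0\}\cap P$ is a facet, which forces $\lambda=\lambda_\Gamma$.
\end{itemize}

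Your version does not use the subdivision property of regular lifts, so your final covering paragraph is actually redundant. The paper's version is shorter.

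One caveat concerns your first bullet. The sentence about raising $\lambda$ by $\varepsilon$ is not an argument as it stands. A positive convex combination $c=\sum w_ia_i$ only gives $\lambda(c)\le0$, which does not contradict $\lambda(c)<-1$. Likewise, the claim that the lower hull is the graph of the coning function is true, since that function is $\max_\Gamma\lambda_\Gamma$, but you assert its convexity without proof. The affine-spanning argument in your last paragraph is what actually proves $c\in\sigma$, and it is enough on its own.
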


\begin{proof}
Let $\Gamma$ be a facet of $P$, and let $\ell_\Gamma$ be the affine function which vanishes on $\Gamma$ and is positive on the interior of $P$. By (U$_A$), $\ell_\Gamma(a_i)>0$ for $i\notin\Gamma$. The affine function $\lambda=-\ell_\Gamma/\ell_\Gamma(c)$ satisfies $\lambda(a_s)=0=h_s$ for $s\in\Gamma$, $\lambda(c)=-1=h_{N+1}$, and $\lambda(a_i)<0=h_i$ for $i\le N$, $i\notin\Gamma$. Thus $\{(a_i,h_i)\}_{i\in\sigma_\Gamma}$ spans a lower facet of the lifted configuration which contains no other lifted point, that is, $\sigma_\Gamma\in T_h$. Since $P$ is the cone from its interior point $c$ over its boundary, the simplices $\conv(\Gamma\cup\{c\})$, $\Gamma$ a facet of $P$, cover $P$. Hence they are all the simplices of $T_h$, and every lower facet contains exactly $n+1$ lifted points.
\end{proof}

\begin{lemma}\label{lem:coning3}
Let $\Gamma$ be a facet of $P$, and $\sigma_\Gamma=\Gamma\cup\{N+1\}$ the corresponding simplex of $T_h$ (Lemma~\ref{lem:coning2}). Then $\sigma_\Gamma$ is positively decorated by $C'$ if and only if $u\in\cone(C_s,\ s\in\Gamma)$, and in this case
\[
s(\sigma_\Gamma)=(-1)^n\,\sign\det C^\Gamma\cdot\varepsilon_\Gamma .
\]
\end{lemma}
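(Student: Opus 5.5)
Write $\Gamma=\{s_1<\dots<s_n\}$. Since $N+1$ is the largest index, the columns of $\sigma_\Gamma$ in increasing order are $s_1,\dots,s_n,N+1$. I will treat positive decoration first, then the two determinant factors of $s(\sigma_\Gamma)$ separately.

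\emph{Positive decoration.} By definition, $\sigma_\Gamma$ is positively decorated by $C'$ if and only if $\ker C'^{\sigma_\Gamma}$ is a line spanned by a positive vector. Such a vector $(\lambda_1,\dots,\lambda_n,\mu)$ satisfies $\sum_r\lambda_rC_{s_r}=\mu u$. Normalizing $\mu=1$, this says that $u$ is a positive combination of the $C_s$, $s\in\Gamma$. Conversely, assume $u\in\cone(C_s,\ s\in\Gamma)$. By the choice of $u$, $u$ lies in no cone spanned by $n-1$ columns, so all coefficients are positive. By (U$_C$) for $C'$ (Lemma~\ref{lem:coning1}), the kernel is a line. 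Hence the two conditions are equivalent.

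\emph{The $C$-factor.} Since $\sigma_\Gamma$ is positively decorated, the paper already gives $\sign\det\widehat C'^{\sigma_\Gamma}=\sign\operatorname{minor}(C'^{\sigma_\Gamma},1)$. I would use the equivalent, more symmetric fact that this sign equals $\sign\operatorname{minor}(C'^{\sigma_\Gamma},n+1)$: indeed $(-1)^j\operatorname{minor}$ has constant sign, so $\operatorname{minor}(\cdot,1)$ and $(-1)^{n}\operatorname{minor}(\cdot,n+1)$ have the same sign. Deleting the last column leaves $C^\Gamma$, so
\[
\sign\det\widehat C'^{\sigma_\Gamma}=(-1)^n\sign\det C^\Gamma .
\]
I would double-check this step with the cofactor expansion $\det\widehat C=\sum_j(-1)^{1+j}\operatorname{minor}(\cdot,j)$: all its terms have the sign of $-(-1)^j\operatorname{minor}(\cdot,j)$ times $(-1)^{2}$, which is consistent.

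\emph{The $A$-factor.} $A'^{\sigma_\Gamma}$ is the matrix with columns $(1,a_{s_1}),\dots,(1,a_{s_n}),(1,c)$. Its determinant is a positive multiple of any $\det A^{\Gamma\cup\{i\}}$ with $i\notin\Gamma$, because $c$ lies strictly on the same side of $\Gamma$ as the points of $\A$ off $\Gamma$. To see this, note that $\det(\cdots,(1,x))$ is an affine function of $x$ vanishing on $\Gamma$, hence a multiple of $\ell_\Gamma$. So $\sign\det A'^{\sigma_\Gamma}=\varepsilon_\Gamma$.

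Multiplying the two factors gives $s(\sigma_\Gamma)=(-1)^n\sign\det C^\Gamma\cdot\varepsilon_\Gamma$. The only delicate point is the sign bookkeeping in the $C$-factor, namely matching the sign pattern of the minors in the definition of positively spanning with the expansion of $\det\widehat C$. I would verify it on the case $n=1$: with $C_s=1$ and $u=1$, the matrix $\widehat C=\begin{pmatrix}1&1\\1&-1\end{pmatrix}$ has determinant $-2$, and $(-1)^1\det C^\Gamma=-1$, as expected.
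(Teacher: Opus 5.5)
Your proof is correct and follows essentially the paper's argument. You characterize positive decoration through the kernel, using the choice of $u$ and (U$_C$) for $C'$. You handle the $A$-factor with the affine function $x\mapsto\det(\cdots,(1,x))$ vanishing on the hyperplane of $\Gamma$. You get $(-1)^n\sign\det C^\Gamma$ from the constant sign of $(-1)^j\operatorname{minor}(C'^{\sigma_\Gamma},j)$.

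One wording slip: you first say that $\sign\det\widehat C'^{\sigma_\Gamma}$ equals $\sign\operatorname{minor}(C'^{\sigma_\Gamma},n+1)$, which drops the factor $(-1)^n$. Your next clause and the displayed formula have it right.
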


\begin{proof}
The matrix $C'^{\sigma_\Gamma}$ has the columns $C_s$, $s\in\Gamma$, in increasing order, followed by $-u$. It is positively spanning if and only if its kernel is spanned by a positive vector, that is, if and only if $\sum_{s\in\Gamma}\lambda_sC_s=\mu u$ with $\lambda_s>0$ and $\mu>0$. In other words, $\sigma_\Gamma$ is positively decorated if and only if $u$ lies in the interior of $\cone(C_s,\ s\in\Gamma)$. Since $u$ does not lie in a cone spanned by $n-1$ columns, this is equivalent to $u\in\cone(C_s,\ s\in\Gamma)$.

Assume now that $\sigma_\Gamma$ is positively decorated. The matrix $A'^{\sigma_\Gamma}$ has the columns $(1,a_s)$, $s\in\Gamma$, in increasing order, followed by $(1,c)$. The function $x\mapsto\det A^{\Gamma\cup\{x\}}$, where the last column is $(1,x)$, is affine and vanishes on the affine hull of $\Gamma$. Moreover, $c$ and $a_{i_\Gamma}$ lie strictly on the same side of this hull. Hence $\sign\det A'^{\sigma_\Gamma}=\sign\det A^{\Gamma\cup\{i_\Gamma\}}=\varepsilon_\Gamma$. Expanding $\det\widehat C'^{\sigma_\Gamma}$ along its first row, we get
\[
\det\widehat C'^{\sigma_\Gamma}=\sum_{j=1}^{n+1}(-1)^{1+j}\operatorname{minor}\big(C'^{\sigma_\Gamma},j\big).
\]
Since $C'^{\sigma_\Gamma}$ is positively spanning, the numbers $(-1)^j\operatorname{minor}(C'^{\sigma_\Gamma},j)$ are nonzero and have the same sign. Thus all the terms of this sum have the same sign, and $\sign\det\widehat C'^{\sigma_\Gamma}$ is the sign of the last term, $(-1)^{n+2}\operatorname{minor}(C'^{\sigma_\Gamma},n+1)=(-1)^n\det C^\Gamma$. Multiplying, we obtain $s(\sigma_\Gamma)=\varepsilon_\Gamma\cdot(-1)^n\sign\det C^\Gamma$.
\end{proof}

\begin{proof}[Second proof of Theorem~\ref{thm:A}]
Assume first that $u$ does not lie in any linear hyperplane spanned by $n-1$ columns of $C$. By Lemma~\ref{lem:coning1}, $\deg F=\deg F'$, and $(\A',C')$ satisfies (U$_A$) and (U$_C$). By Theorem~\ref{thm:viro}(2) applied to $F'$ with the heights of Lemma~\ref{lem:coning2}, and by Lemma~\ref{lem:coning3}, we get
\[
\deg F=\deg F'=\sum_{\substack{\Gamma\ \text{facet of}\ P\\ \sigma_\Gamma\ \text{positively decorated}}}s(\sigma_\Gamma)=(-1)^n\sum_{\substack{\Gamma\ \text{facet of}\ P\\ u\in\cone(C_s,\ s\in\Gamma)}}\sign\det C^\Gamma\cdot\varepsilon_\Gamma ,
\]
which is the formula of Theorem~\ref{thm:A}. In general, the right-hand side is locally constant on the set of vectors $u$ which do not lie in any cone spanned by $n-1$ columns of $C$. Indeed, the set of facets $\Gamma$ such that $u\in\cone(C_s,\ s\in\Gamma)$ can only change when $u$ crosses the boundary of such a cone, and this boundary is covered by cones spanned by $n-1$ columns. Each connected component of this open set contains vectors which avoid the hyperplanes spanned by $n-1$ columns. Therefore the formula holds for every $u$ as in Theorem~\ref{thm:A}.
\end{proof}

To summarize, the choice of the direction $u$ in Theorem~\ref{thm:A} is the choice of the decoration $-u$ of an auxiliary interior point. The facets $\Gamma$ which contribute are the positively decorated simplices of the coning triangulation from this point, and the sign $(-1)^n$ comes from the position of the column $-u$ in $\widehat C'^{\sigma_\Gamma}$. For instance, for $n=1$, with $P=[a_1,a_N]$ and $u=1$, the segment $[a_1,c]$ is positively decorated if and only if $c_1>0$, with $s=-1$, and the segment $[c,a_N]$ if and only if $c_N>0$, with $s=+1$. This gives again $\deg F=[c_N>0]-[c_1>0]$.

\section{Gale duality and the second formula}\label{sec:gale}

\subsection{Gale dual matrices}\label{sec:gale-matrices}
Let $B$ be an $N\times k$ matrix whose columns form a basis of $\ker A$, and let $D$ be an $N\times(k+1)$ matrix whose columns form a basis of $\ker C$. We say that $B$ and $D$ are \emph{Gale dual matrices} of $A$ and $C$. Since the first row of $A$ is $(1,\dots,1)$, the rows of $B$ sum to zero: $\sum_iB_i=0$. We denote by $B_i\in\R^k$ and $D_i\in\R^{k+1}$ the $i$-th rows, and for $S\subset[N]$ we denote by $B_S$ and $D_S$ the submatrices with rows in $S$, in increasing order. Under (U$_C$), $D_N\neq0$, and we normalize $D$ so that $D_N=(0,\dots,0,1)$.

We will use the following classical relation between complementary minors \cite[Section~3.4]{BLSWZ}. If $M$ is an $r\times N$ matrix of rank $r$ and the columns of the $N\times(N-r)$ matrix $G$ form a basis of $\ker M$, then there is a constant $c\neq0$ such that
\begin{equation}\label{eq:complementary}
\det M^\tau=c\,(-1)^{\sum_{j\in\tau}j}\,\det G_{[N]\setminus\tau}\qquad\text{for all }\tau\subset[N],\ |\tau|=r .
\end{equation}
In particular, (U$_A$) is equivalent to the condition that any $k$ rows of $B$ are linearly independent, and (U$_C$) to the condition that any $k+1$ rows of $D$ are linearly independent.

We put $P_i(v)=\langle D_i,v\rangle$ for $v\in\R^{k+1}$ and $p_i(y)=P_i(y,1)$ for $y\in\R^k$, so that $p_N=1$, and we consider
\[
\Delta=\{y\in\R^k:\ p_i(y)>0,\ i\in[N]\},\qquad K=\{v\in\R^{k+1}:\ P_i(v)\ge0,\ i\in[N]\}.
\]
The vectors of $\ker C$ are the vectors $Dv$. Hence $\Delta\neq\emptyset$ if and only if $\ker C$ contains a positive vector. When $\Delta\neq\emptyset$, $K$ is a pointed cone with nonempty interior, and $\overline\Delta$ is identified with the image of $K\setminus\{0\}$ in the sphere $S^k$. For $S\subset[N]$, let $K_S=K\cap\{P_s=0,\ s\in S\}$. We say that $S$ is a \emph{face} of $\Delta$ if $K_S\neq\{0\}$ and $P_i>0$ on the relative interior of $K_S$ for $i\notin S$. We denote by $V(\Delta)$ the set of the faces of size $k$. They correspond to the vertices of the closure $\overline\Delta$ of $\Delta$ in the projective space, that is, to the extreme rays of $K$, including the vertices at infinity when $\Delta$ is unbounded.

\begin{proposition}\label{prop:vertices}
Assume (U$_C$) and $\Delta\neq\emptyset$.
\begin{enumerate}
\item Every face $S$ of $\Delta$ has $|S|\le k$, and every subset of a face is a face. The sets $S\in V(\Delta)$ correspond to the extreme rays of $K$.
\item A set $S$ with $|S|=k$ belongs to $V(\Delta)$ if and only if the numbers $\det(D_S;D_i)$, $i\notin S$, all have the same sign.
\end{enumerate}
\end{proposition}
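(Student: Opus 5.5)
The proof is elementary polyhedral geometry of the cone $K$, using that by (U$_C$) any $k+1$ of the vectors $D_i$ are linearly independent.

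\emph{Faces are small and closed under subsets.} Let $S$ be a face and take $v\neq0$ in the relative interior of $K_S$. Then $\langle D_s,v\rangle=0$ for $s\in S$, and $v\neq 0$. If $|S|\ge k+1$, the rows $D_s$, $s\in S$, would span $\R^{k+1}$ and force $v=0$. Hence $|S|\le k$. Moreover, the vectors $D_s$, $s\in S$, are linearly independent, so $L_S=\{P_s=0,\ s\in S\}$ has dimension $k+1-|S|$.

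Next I would show that $K_S$ is full-dimensional in $L_S$. The sets $\{P_i>0\}$, $i\notin S$, are open and contain the relative interior point $v$, so a neighbourhood of $v$ in $L_S$ lies in $K$. Thus $K_S$ has dimension $k+1-|S|$.

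Let $S'\subset S$. Pick $w\in L_{S'}$ with $P_s(w)>0$ for $s\in S\setminus S'$; this is possible by linear independence of the $D_s$. Then $v+\epsilon w$, for small $\epsilon>0$, lies in $K_{S'}$ and has all $P_i>0$ for $i\notin S'$. So $\{i:P_i\equiv 0 \text{ on } K_{S'}\}=S'$, which means that $S'$ is a face. Here I use the standard fact that the relative interior of the face $K_{S'}$ consists of the points where exactly the $P_i$ vanishing identically on it vanish.

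\emph{Extreme rays.} The faces of $K$ are exactly the sets $K_S$, with $S$ the set of constraints vanishing identically on the face. The dimension count above shows that the faces $S$ with $|S|=k$ correspond bijectively to the one-dimensional faces of $K$, that is, to its extreme rays. These rays are nonzero because $K$ is pointed when $\Delta\neq\emptyset$.

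\emph{Criterion in part 2.} Let $|S|=k$. By (U$_C$), $L_S$ is a line, spanned by the vector $w$ defined by the cofactor identity $\langle w,x\rangle=\det(D_S;x)$ for $x\in\R^{k+1}$. Then $P_i(w)=\det(D_S;D_i)$, which is nonzero for $i\notin S$, again by (U$_C$).

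Now $S\in V(\Delta)$ if and only if either $w$ or $-w$ lies in $K$ with $P_i>0$ for all $i\notin S$. This holds exactly when all $\det(D_S;D_i)$, $i\notin S$, have the same sign.

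For the converse direction, suppose such a $w$ (or $-w$) exists. Then $K_S$ is the corresponding ray, which is nonzero and has $P_i>0$ on its relative interior for $i\notin S$. So $S$ is a face of size $k$.

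The only mildly delicate point is the convention that faces are determined by the vanishing set on the relative interior. The definition in the text builds this in, so no real obstacle is expected.
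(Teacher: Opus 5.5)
Your proof is correct and follows the same route as the paper: (U$_C$) makes any $k+1$ of the forms $P_i$ linearly independent, which gives $|S|\le k$, and for $|S|=k$ the cofactor vector $w$ with $\langle D_i,w\rangle=\det(D_S;D_i)$ spans $\{P_s=0,\ s\in S\}$, so $S\in V(\Delta)$ exactly when $w$ or $-w$ lies in $K$. You also spell out, through the perturbation $v+\epsilon w$ and the dimension count $\dim K_S=k+1-|S|$, the closure of faces under subsets and the correspondence with extreme rays, which the paper settles in one line by appealing to the simplicity of the arrangement.
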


\begin{proof}
By (U$_C$), any $k+1$ of the linear forms $P_i$ are linearly independent. Hence a nonzero face of $K$ lies on at most $k$ of the hyperplanes $\{P_i=0\}$. For $|S|=k$, the line $\{P_s=0,\ s\in S\}$ is spanned by the cofactor vector $v_S$ of $D_S$, which satisfies $\langle D_i,v_S\rangle=\det(D_S;D_i)$. This line meets $K$ in an extreme ray if and only if $\pm v_S\in K$, that is, if and only if the nonzero numbers $\det(D_S;D_i)$, $i\notin S$, have the same sign. Finally, every nonzero face of $K$ is spanned by its extreme rays, and the rest follows from the simplicity of the arrangement.
\end{proof}

Let $\lambda=\sum_iD_i$, so that $\lambda(v)=\sum_iP_i(v)>0$ on $K\setminus\{0\}$, and let $Q=\{v\in K:\ \lambda(v)=1\}$. This is a compact simple polytope of dimension $k$. Its vertices are the points $Q\cap K_S$, $S\in V(\Delta)$, and its facets lie on the hyperplanes $\{P_i=0\}$. In particular, $Q$ has at most $N$ facets.

\subsection{Gale translation of the Viro formula}

\begin{lemma}\label{lem:translation}
Assume (U$_C$). Let $h\in\R^N$, put $e=-B^Th$, and let $\sigma\subset[N]$ with $|\sigma|=n+1$ and $S=[N]\setminus\sigma$, so that $|S|=k$.
\begin{enumerate}
\item If $\det B_S\neq0$, then $\sigma\in T_h$ if and only if $e$ lies in the interior of the simplicial cone $\cone(-B_s,\ s\in S)$. If $e$ does not lie in a cone spanned by $k-1$ of the vectors $-B_i$, then $h$ is generic, and the simplices of $T_h$ are the sets $\sigma$ such that $e\in\cone(-B_s,\ s\in S)$; for these, $\det B_S\neq0$. Under (U$_A$), $h$ is generic if and only if $e$ does not lie in a cone spanned by $k-1$ of the vectors $-B_i$.
\item $\sigma$ is positively decorated if and only if $\Delta\neq\emptyset$ and $S\in V(\Delta)$.
\item There is a sign $\epsilon\in\{\pm1\}$, depending only on the choices of $B$ and $D$, such that $\omega(S)=\epsilon\,s(\sigma)$ for every positively decorated $\sigma\in T_h$.
\end{enumerate}
\end{lemma}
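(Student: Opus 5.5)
For~(1), I will translate the lower-facet condition into Gale dual language. The condition $\sigma\in T_h$ says there is an affine function $\lambda_\sigma$ with $h-\lambda_\sigma(a)\ge0$, vanishing exactly on $\sigma$. The vectors $(\lambda(a_i))_i$, for $\lambda$ affine, form the row space of $A$, which is the orthogonal complement of $\ker A=\operatorname{im}B$. Hence $\sigma\in T_h$ if and only if there is a vector $w\in h+\operatorname{row}(A)$ with $w_i=0$ for $i\in\sigma$ and $w_i>0$ for $i\in S$. Equivalently, $B^Tw=B^Th=-e$ with $w$ supported on $S$ and positive there, that is, $-e=\sum_{s\in S}w_sB_s$ with $w_s>0$, or $e\in\operatorname{int}\cone(-B_s,\ s\in S)$. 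For the converse direction, a $w\ge0$ supported on $S$ with $B^Tw=-e$ satisfies $w-h\in\ker B^T=\operatorname{row}(A)$, so it comes from an affine function. The genericity statements then follow. If $e$ avoids every cone on $k-1$ rows, any representation $-e=\sum w_sB_s$ with $w\ge0$ must have full support of size at least $k$. Carathéodory then reduces to $|S|=k$ with $\det B_S\ne0$. A lower facet with more than $n+1$ points would produce a nonnegative $w$ with support of size less than $k$. Under (U$_A$), every $B_S$ is invertible by \eqref{eq:complementary}, so the converse also holds.

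For~(2), I will use $\ker C=\{Dv\}$. The simplex $\sigma$ is positively decorated if and only if $\ker C^\sigma$ contains a positive vector, that is, if and only if some $Dv$ vanishes on $S$ and is positive on $\sigma$. Such a $v$ lies in $K$, satisfies $P_s(v)=0$ for $s\in S$, and is positive on the remaining forms. This is exactly the statement that $\pm v_S$ spans an extreme ray of $K$. By Proposition~\ref{prop:vertices}(2), this happens if and only if $S\in V(\Delta)$, which in turn forces $\Delta\ne\emptyset$.

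For~(3), which I expect to be the main obstacle, I will compare the two factors of $s(\sigma)$ separately using \eqref{eq:complementary}. For the first factor, \eqref{eq:complementary} applied to $A$ and $B$ gives $\sign\det A^\sigma=\sign c_A\,(-1)^{\sum_{j\in\sigma}j}\sign\det B_S$. For the second factor, I need $\sign\det\widehat C^\sigma=\sign\operatorname{minor}(C^\sigma,1)=\sign\det C^{\sigma\setminus\{l_0\}}$. Applying \eqref{eq:complementary} to $C$ and $D$ with $\tau=\sigma\setminus\{l_0\}$ gives $\pm\det D_{S\cup\{l_0\}}$. Reordering the rows so that $l_0$ comes last turns this into $\det(D_S;D_{l_0})$, up to a sign $(-1)^{m}$, where $m$ is the number of elements of $S$ larger than $l_0$. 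By Proposition~\ref{prop:vertices}(2), the sign of $\det(D_S;D_i)$ is independent of $i\notin S$, so the choice of $l_0$ does not matter. The remaining work is to check that the parity factors $(-1)^{\sum_{\sigma}j}$, $(-1)^{\sum_{\tau}j}$ and the reordering sign $(-1)^m$ combine to something independent of $\sigma$. Since $\tau=\sigma\setminus\{l_0\}$, their product is $(-1)^{l_0+m}$. Here $m$ counts the elements of $S$ above $l_0$, where $l_0$ is the smallest element of $\sigma$, so all $l_0-1$ indices below $l_0$ lie in $S$. This gives $m=k-(l_0-1)$ and $(-1)^{l_0+m}=(-1)^{k+1}$, which is constant. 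Therefore $\epsilon=\sign c_A\,\sign c_C\,(-1)^{k+1}$ depends only on $B$ and $D$.
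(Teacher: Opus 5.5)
Your proof follows the paper's argument essentially step by step. In (1) you identify $h+\operatorname{row}(A)$ with $\{w:\ B^Tw=B^Th\}$ and read off the lower facets. In (2) you use $\ker C=\{Dv\}$. In (3) you combine \eqref{eq:complementary} for $(A,B)$ and $(C,D)$ with a row reordering. Taking $i_S=l_0=\min\sigma$ just specializes the paper's bookkeeping with an arbitrary $i\in\sigma$ at position $p$, and your count $m=k-(l_0-1)$, giving $(-1)^{k+1}$, is correct.

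One step in (2) does not stand as written. Proposition~\ref{prop:vertices} is stated under the hypothesis $\Delta\neq\emptyset$. So you cannot invoke it and then say that $S\in V(\Delta)$ ``forces'' $\Delta\neq\emptyset$. The non-emptiness of $\Delta$ is part of what (2) asserts, and it must be derived from the positive decoration. The paper does this by a perturbation:
\begin{itemize}
\item Take $v$ with $P_s(v)=0$ for $s\in S$ and $P_i(v)>0$ for $i\in\sigma$.
\item The $k$ forms $P_s$, $s\in S$, are linearly independent by (U$_C$), so there is $w$ with $P_s(w)=1$ for all $s\in S$.
\item For small $\eta>0$, all $P_i(v+\eta w)$ are positive.
\item Since $P_N$ is the last coordinate, rescaling gives a point $(y,1)$ with $y\in\Delta$.
\end{itemize}
Once this is in place, you can cite Proposition~\ref{prop:vertices}. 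More simply, you can check the definition of a face directly. $K_S$ is the ray $\R_{\ge0}v$, because the forms $P_s$, $s\in S$, cut out a line and $-v\notin K$ since $\sigma\neq\emptyset$. Moreover $P_i>0$ on its relative interior for $i\notin S$.

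A smaller point in (1): ``every $B_S$ is invertible, so the converse also holds'' only restates (U$_A$) and does not give the argument. What is needed is this. A $w\ge0$ with $B^Tw=-e$ and support of size less than $k$ gives an affine $\lambda$ with $\lambda(a_i)\le h_i$ for all $i$, with equality for at least $n+2$ indices. Under (U$_A$) these points affinely span $\R^n$, so they lie on a lower facet with more than $n+1$ lifted points, and $h$ is not generic.
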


\begin{proof}
1. By~\eqref{eq:complementary}, $\det B_S\neq0$ if and only if $\det A^\sigma\neq0$, that is, if and only if the points $a_l$, $l\in\sigma$, are affinely independent. Let $\sigma'\subset[N]$ be any subset, with complement $S'$. The points $(a_i,h_i)$, $i\in\sigma'$, span a lower face of the lifted configuration which contains no other lifted point if and only if $h=A^Tc+r$ for some $c\in\R^{n+1}$ and $r\in\R^N$ with $r_{\sigma'}=0$ and $r_{S'}>0$. In this case $B^Th=B^Tr=\sum_{s\in S'}r_sB_s$, so $e=\sum_{s\in S'}r_s(-B_s)$. Conversely, assume that $e=\sum_{s\in S'}r_s(-B_s)$ with $r_s>0$, and extend $r$ by $0$ on $\sigma'$. Then $B^T(h-r)=0$, so $h-r$ lies in the row space of $A$, which is the orthogonal complement of $\ker A$. Taking $\sigma'=\sigma$ with $\det A^\sigma\neq0$, we get the first assertion.

Assume now that $e$ does not lie in a cone spanned by $k-1$ of the vectors $-B_i$. Then a lower facet cannot contain more than $n+1$ lifted points, since its complement $S'$ would have fewer than $k$ elements. Hence the lower facets are simplices, and $h$ is generic. Moreover, if $\det B_S=0$, then by Carath\'eodory's theorem $\cone(-B_s,\ s\in S)$ is covered by cones spanned by $k-1$ of the vectors $-B_s$, so $e$ does not lie in it. If $\det B_S\neq0$ and $e$ lies in this cone, then it lies in its interior. Finally, under (U$_A$) all the cones $\cone(-B_s,\ s\in S)$ are simplicial. The converse then follows, since $e$ lies in a cone spanned by fewer than $k$ vectors $-B_i$ if and only if it lies in the boundary of some $\cone(-B_s,\ s\in S)$.

2. By (U$_C$), the kernel of $C^\sigma$ is a line. Thus $\sigma$ is positively decorated if and only if $\ker C$ contains a vector $m=Dv$ with $m_\sigma>0$ and $m_S=0$, that is, such that $P_s(v)=0$ for $s\in S$ and $P_i(v)>0$ for $i\in\sigma$. Moving $v$ slightly in a direction where the $k$ independent forms $P_s$, $s\in S$, are positive, we get an interior point of $K$, so $\Delta\neq\emptyset$. Moreover, $v$ spans an extreme ray of $K$ with $P_i>0$ off $S$, that is, $S\in V(\Delta)$. The converse is Proposition~\ref{prop:vertices}.

3. Let $i\in\sigma$, and let $p$ be the position of $i$ in $\sigma$. Expanding $\det\widehat C^\sigma$ along its first row gives $\sum_{j}(-1)^{1+j}\det C^{\sigma\setminus\{l_j\}}$. Up to a common sign, the terms of this sum are the coordinates of a vector spanning $\ker C^\sigma$, so they have the same sign when $\sigma$ is positively decorated. Hence $\sign\det\widehat C^\sigma=(-1)^{1+p}\sign\det C^{\sigma\setminus\{i\}}$. By~\eqref{eq:complementary} applied to $(A,B)$ with $\tau=\sigma$, and to $(C,D)$ with $\tau=\sigma\setminus\{i\}$, we have
\[
\sign\det B_S=\pm(-1)^{\sum_{j\in\sigma}j}\sign\det A^\sigma,\qquad \sign\det D_{S\cup\{i\}}=\pm(-1)^{\sum_{j\in\sigma}j-i}\sign\det C^{\sigma\setminus\{i\}},
\]
with signs $\pm$ which only depend on $B$ and $D$. Moreover, $\det(D_S;D_i)=(-1)^{\#\{s\in S:\ s>i\}}\det D_{S\cup\{i\}}$. Multiplying, we get
\[
\omega(S)=\pm(-1)^{i+\#\{s\in S:\ s>i\}+1+p}\,s(\sigma).
\]
Since $i=p+\#\{s\in S:\ s<i\}$, the exponent is congruent to $k+1$ modulo $2$. Therefore $\omega(S)=\pm(-1)^{k+1}s(\sigma)$, with a sign which does not depend on $\sigma$.
\end{proof}

\begin{proof}[Proof of Theorems~\ref{thm:B} and~\ref{thm:Bp}]
Under (U$_A$) and (U$_C$), (H) holds by Lemma~\ref{lem:N}. Hence Theorem~\ref{thm:B} is a special case of Theorem~\ref{thm:Bp}, and we may assume (U$_C$) and (H). Since $B^T$ is onto, the generic vector $e$ is of the form $-B^Th$, and $h$ is then generic by Lemma~\ref{lem:translation}(1). If $\Delta=\emptyset$, then no simplex is positively decorated by Lemma~\ref{lem:translation}(2), and $\deg F=0$ by the Viro formula (Proposition~\ref{prop:viro-pl}). Otherwise, by Proposition~\ref{prop:viro-pl} and Lemma~\ref{lem:translation}, we get
\[
\deg F=\sum_{\substack{\sigma\in T_h\\ \text{pos.\ decorated}}}s(\sigma)=\epsilon\sum_{\substack{S\in V(\Delta)\\ e\in\cone(-B_s,\ s\in S)}}\omega(S)=\epsilon\deg(\Delta).\qedhere
\]
\end{proof}

Thus Theorem~\ref{thm:B} is the Viro formula written on the Gale side. The simplices of $T_h$ are the complements of the sets $S$ such that $e\in\cone(-B_s,\ s\in S)$, and the positively decorated ones correspond to the vertices of $\overline\Delta$. In particular, $\deg(\Delta)$ does not depend on $e$. Moreover, for $t$ small, the number of positive zeros of $F^{h,t}$ is $\#\{S\in V(\Delta):\ e\in\cone(-B_s,\ s\in S)\}\le\#V(\Delta)\le M(k,N)$, by the Upper Bound Theorem \cite{McM} applied to the simple $k$-polytope $Q$, which has at most $N$ facets.

\begin{proposition}\label{prop:ends-gale}
Assume (U$_C$), (H) and $\Delta\neq\emptyset$, and that neither $e_k$ nor $-e_k$ lies in a cone spanned by $k-1$ of the vectors $-B_i$. Let $B'$ be the matrix formed by the first $k-1$ columns of $B$. Then, with the global sign of Theorem~\ref{thm:B},
\[
\deg F=\pm\frac12\sum_{\substack{S\in V(\Delta)\\ (B'_s)_{s\in S}\ \text{positively span}\ \R^{k-1}}}\omega(S).
\]
\end{proposition}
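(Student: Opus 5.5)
Mirroring the proof of Proposition~\ref{prop:ends-newton}, I will apply Theorem~\ref{thm:Bp} twice, once with $e=e_k$ and once with $e=-e_k$, and average. By assumption both vectors are admissible in Theorem~\ref{thm:Bp}. The global sign $\epsilon$ of Lemma~\ref{lem:translation}(3) depends only on $B$ and $D$, not on $e$, so
\[
\deg F=\frac{\epsilon}{2}\Big(\sum_{\substack{S\in V(\Delta)\\ e_k\in\cone(-B_s,\ s\in S)}}\omega(S)+\sum_{\substack{S\in V(\Delta)\\ -e_k\in\cone(-B_s,\ s\in S)}}\omega(S)\Big).
\]
A set $S$ cannot satisfy both conditions at once: that would give $0$ as a positive combination with some nonzero weights, contradicting the linear independence established below. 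So it remains to identify the sets $S\in V(\Delta)$ with $\pm e_k\in\cone(-B_s,\ s\in S)$.

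The key step is the following claim: for $|S|=k$, we have $e_k$ or $-e_k$ in $\cone(-B_s,\ s\in S)$ if and only if the rows $B'_s$, $s\in S$, positively span $\R^{k-1}$.

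For the forward direction, write $\pm e_k=\sum_{s\in S}\beta_s(-B_s)$ with $\beta_s\ge0$. If some $\beta_s$ vanished, $\pm e_k$ would lie in a cone spanned by $k-1$ of the vectors $-B_i$, which is excluded. Hence all $\beta_s>0$. The same hypothesis together with Carath\'eodory's theorem shows that the $B_s$, $s\in S$, are linearly independent. Projecting onto the first $k-1$ coordinates gives $\sum_s\beta_sB'_s=0$ with $\beta>0$. Since the $B_s$ are independent and $e_k$ lies in their span, the $(k-1)\times k$ matrix $(B'_s)$ has rank $k-1$. Its kernel is therefore the line $\R\beta$, which is spanned by a positive vector, so the $B'_s$ positively span $\R^{k-1}$.

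For the converse, suppose the $B'_s$, $s\in S$, positively span $\R^{k-1}$, and take $\beta>0$ with $\sum_s\beta_sB'_s=0$. Then $\sum_s\beta_s(-B_s)$ is a multiple $c\,e_k$. It remains to show $c\neq0$. If $c=0$, the $k$ rows $B_s$ would admit a positive linear dependence, and $e_k$ would not lie in their span, since the span would then project onto a space containing no $e_k$ component. I will handle this by perturbation: the hypothesis on $\pm e_k$ is open, so I can tilt the dependence slightly to contradict it. If that does not work cleanly, I will treat this case separately, using that under (H) such a positive dependence makes $S$ a coface, and Lemma~\ref{lem:gale-N} then shows $S\notin V(\Delta)$, so $S$ does not appear in the sum anyway. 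Once $c\neq0$, either $e_k$ or $-e_k$ lies in $\cone(-B_s,\ s\in S)$.

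The main obstacle is this degenerate case of a positive linear dependence among the full rows $B_s$. I expect to resolve it through the coface characterization, which is the Gale translation of (H). Assuming the claim, the two sums in the display combine into a single sum over the $S\in V(\Delta)$ whose rows $B'_s$ positively span $\R^{k-1}$, which gives the stated formula with the sign $\pm=\epsilon$.
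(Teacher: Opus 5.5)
Your proposal is correct and is essentially the paper's proof. You average Theorem~\ref{thm:Bp} over $e=\pm e_k$, use Carath\'eodory to make each contributing cone simplicial so that no vertex is counted twice, and identify the contributing vertices as those where the $B'_s$ positively span $\R^{k-1}$. You exclude the degenerate case $\sum_s\beta_sB_s=0$ exactly as the paper does: such an $S$ would be a coface with $K_S\neq\{0\}$, which contradicts (H) via Lemma~\ref{lem:gale-N}(2).

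Drop the perturbation idea, though. The hypothesis on $\pm e_k$ alone cannot rule this case out, since the cone is then the hyperplane spanned by the $B_s$, which misses $e_k$. So the coface argument, and hence (H), is genuinely needed.
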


The assumption holds in particular if all the maximal minors of $B'$ are nonzero. It holds after replacing $B$ by $BM$ for a generic $M\in GL_k(\R)$, that is, after a generic change of basis of $\ker A$. This does not change $\Delta$, and it only changes the global sign. As on the Newton side (Remark~\ref{rem:curve-newton}), the vertices counted in Proposition~\ref{prop:ends-gale} correspond to the ends of a curve. Let $\mathcal C$ be the curve defined in $\Delta$ by the first $k-1$ equations $\psi_1=\dots=\psi_{k-1}=0$ of the Gale system (Remark~\ref{rem:gale-system}). For generic data, the ends of $\mathcal C$ converge to the vertices $S\in V(\Delta)$ such that the vectors $B'_s$, $s\in S$, positively span $\R^{k-1}$, and each of these vertices is reached by exactly one end \cite{PartI}. The positive zeros of $F$ are the zeros of $\psi_k$ on $\mathcal C$, and Proposition~\ref{prop:ends-gale} says that $\deg F$ is half the signed count of the ends of $\mathcal C$. In particular, $|\deg F|$ is at most the number of noncompact components of $\mathcal C$, and the bound of Theorem~\ref{thm:descartes}(2) is the bound for this number obtained in \cite{PartI}.

\begin{proof}
By assumption, Theorem~\ref{thm:Bp} applies both with $e=e_k$ and with $e=-e_k$. By its proof, the global sign $\epsilon$ is the same in both cases. A vertex $S\in V(\Delta)$ contributes $\omega(S)$ to the formula for $e=\pm e_k$ if and only if $\pm e_k\in\cone(-B_s,\ s\in S)$. Averaging the two formulas, we get
\[
\deg F=\frac\epsilon2\sum_S\omega(S),
\]
where the sum runs over the vertices $S$ such that $\cone(-B_s,\ s\in S)$ meets the line $\R e_k$ outside the origin. Indeed, such a vertex contributes to exactly one of the two formulas: if $e_k$ or $-e_k$ lies in $\cone(-B_s,\ s\in S)$, then $\det B_S\neq0$ by Carath\'eodory's theorem, so this cone is simplicial and cannot contain both $e_k$ and $-e_k$.

It remains to characterize these vertices. The cone $\cone(-B_s,\ s\in S)$ meets $\R e_k$ outside the origin if and only if there is $\beta\in\R^S_{\ge0}$ such that $\sum_s\beta_sB'_s=0$ and $\sum_s\beta_sB_s\neq0$. The second condition is automatic for $\beta\neq0$. Indeed, if $\sum_s\beta_sB_s=0$ with $\beta\ge0$, $\beta\neq0$, then the support $S'\subset S$ of $\beta$ is a coface, and $K_{S'}\supset K_S\neq\{0\}$, which contradicts (H) by Lemma~\ref{lem:gale-N}(2). Now, assume that $\sum_s\beta_sB'_s=0$ with $\beta\ge0$, $\beta\neq0$. Then $\sum_s\beta_s(-B_s)$ is a nonzero multiple of $e_k$. If some $\beta_s$ were zero, $e_k$ or $-e_k$ would lie in a cone spanned by $k-1$ of the vectors $-B_i$. Hence all the $\beta_s$ are positive. For the same reason $\det B_S\neq0$, so the $k$ vectors $B'_s$, $s\in S$, span $\R^{k-1}$, and therefore they positively span $\R^{k-1}$. Conversely, if they positively span $\R^{k-1}$, they satisfy a linear relation with positive coefficients. This gives the formula.
\end{proof}

\begin{corollary}[A parity criterion for the existence of positive zeros]\label{cor:odd}
Assume (U$_A$) and (U$_C$). Let $h\in\R^N$ be generic, $u\in\R^n$ as in Theorem~\ref{thm:A} and $e\in\R^k$ as in Theorem~\ref{thm:B}. Then
\begin{align*}
\deg F&\equiv\#\{\sigma\in T_h\ \text{positively decorated}\}\\
&\equiv\#\{\Gamma\ \text{facet of}\ \conv(\A):\ u\in\cone(C_s,\ s\in\Gamma)\}\\
&\equiv\#\{S\in V(\Delta):\ e\in\cone(-B_s,\ s\in S)\}\pmod2 .
\end{align*}
(If $\Delta=\emptyset$, the last set is empty.)
In particular, if one of these numbers is odd for some choice of $h$, $u$ or $e$, then all of them are odd for every choice, $F$ has at least one positive zero, and if the positive zeros are nondegenerate, their number is odd.
\end{corollary}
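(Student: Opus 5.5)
The plan is to reduce all three congruences to one elementary fact: a sum of $m$ terms, each equal to $\pm1$, is congruent to $m$ modulo $2$. Under (U$_A$) and (U$_C$), Lemma~\ref{lem:N} gives (H), and then (N), so $\deg F$ is defined. Each of the three counts is the number of terms of a signed sum that equals $\pm\deg F$.

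I would treat the three counts in turn.

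\textbf{The Viro count.} By Theorem~\ref{thm:viro}(2), or Proposition~\ref{prop:viro-pl}, $\deg F=\sum s(\sigma)$ over the positively decorated $\sigma\in T_h$. Each $s(\sigma)\in\{\pm1\}$, so $\deg F$ is congruent to the number of these simplices.

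\textbf{The facet count.} By Theorem~\ref{thm:A}, $\deg F$ is a sum over the facets $\Gamma$ with $u\in\cone(C_s,\ s\in\Gamma)$. Each term is $(-1)^n\sign\det C^\Gamma\cdot\varepsilon_\Gamma$. This is $\pm1$ because $\det C^\Gamma\neq0$ by (U$_C$), and $\varepsilon_\Gamma\neq0$ by (U$_A$), as $a_{i_\Gamma}$ lies off the hyperplane of $\Gamma$.

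\textbf{The Gale count.} If $\Delta=\emptyset$, then $\deg F=0$ by Theorem~\ref{thm:B}, and the last set is empty, so the congruence holds. Otherwise Theorem~\ref{thm:B} gives $\deg F=\pm\sum\omega(S)$. Each $\omega(S)$ is $\pm1$: we need $\det B_S\neq0$, which follows from (U$_A$) via \eqref{eq:complementary}, and $\det(D_S;D_{i_S})\neq0$, which follows from (U$_C$), since any $k+1$ rows of $D$ are independent. The global sign does not affect parity.

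\textbf{Consequences.} Since every count is congruent to the single integer $\deg F$, oddness for one choice of $h$, $u$ or $e$ forces $\deg F$ to be odd, hence all counts are odd for every choice. An odd degree is nonzero, so Proposition~\ref{prop:degree}(2) gives a positive zero. When the zeros are nondegenerate, Proposition~\ref{prop:degree}(1) gives that their number is congruent to $\deg F$, hence odd.

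No real obstacle is expected. The only point to check carefully is that every term is a genuine nonzero sign under the genericity assumptions, which is the nonvanishing of the determinants noted above.
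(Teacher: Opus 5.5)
Your proposal is correct and follows the paper's proof. Theorems~\ref{thm:viro}(2), \ref{thm:A} and~\ref{thm:B} each write $\pm\deg F$ as a sum with one sign per element of the corresponding set, a sum of $m$ signs is congruent to $m$ modulo $2$, and Proposition~\ref{prop:degree} (via Corollary~\ref{cor:parity} in the paper) gives the existence and parity consequences; your explicit check that every term is a nonzero sign under (U$_A$) and (U$_C$) is a detail the paper leaves implicit.
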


\begin{proof}
Each of the three formulas of Theorems~\ref{thm:viro}, \ref{thm:A} and~\ref{thm:B} writes $\pm\deg F$ as a sum of signs $\pm1$, one for each element of the corresponding set. A sum of $m$ signs is congruent to $m$ modulo $2$. If $\deg F$ is odd, then it is nonzero, and Proposition~\ref{prop:degree}(2) and Corollary~\ref{cor:parity} apply.
\end{proof}

\begin{proof}[Proof of Theorem~\ref{thm:F}]
By Theorems~\ref{thm:Ap}, \ref{thm:viro}(2) and~\ref{thm:Bp}, each of the three sums is $\pm\deg F$. A sum of $m\ge1$ signs which are all equal is $\pm m\neq0$, and a sum of an odd number of signs is odd, hence nonzero. Thus $\deg F\neq0$, and $F$ has a positive zero by Proposition~\ref{prop:degree}(2). If the terms are all equal, then $|\deg F|$ is their number. If moreover the positive zeros are nondegenerate, there are at least $|\deg F|$ of them by Proposition~\ref{prop:degree}(1).
\end{proof}

\begin{example}[Example~\ref{ex:run}, continued: existence]\label{ex:deg2}
For the running example, each of the three sums of Theorem~\ref{thm:F} has two terms, both equal to $+1$, for every choice of $T$ and $u$, of $h$, or of $e$ (Examples~\ref{ex:run}, \ref{ex:run-A} and~\ref{ex:run-gale}). Hence $\deg F=2$, and $F$ has at least two positive zeros. Note that, since the degree is even, the parity criterion of Corollary~\ref{cor:odd} gives nothing here.
\end{example}

Observe that the criteria of Theorem~\ref{thm:F} only involve the conditions which select the terms and the signs of the selected terms. These conditions are whether a simplex is positively decorated, whether a simplex of $T$ carries $u$ in the cone of its columns, or whether a vertex of $\overline\Delta$ carries $e$ in its cone. In terms of Viro's construction, Corollary~\ref{cor:odd} says the following: if some regular triangulation of $\A$ has an odd number of positively decorated simplices, then not only the Viro systems $F^{h,t}$ for $t$ small, as in \cite[Theorem~3.4]{BSS}, but also the map $F$ itself has a positive zero. This criterion is of the same nature as the sign conditions of Bihan, Dickenstein and Giaroli \cite{BDG} for the existence of a positive solution.

\begin{remark}[Complex zeros on the Gale side]\label{rem:complex-gale}
Assume that the exponents are integers and that the vectors $(1,a_i)$ generate $\mathbb Z^{n+1}$. We choose for $B$ an integral matrix whose columns form a basis of the lattice $\ker A\cap\mathbb Z^N$. The row lattice of $A$ and $\ker A\cap\mathbb Z^N$ are then primitive and orthogonal, and the integral version of~\eqref{eq:complementary} holds with $c=\pm1$: $|\det B_S|=|\det A^{[N]\setminus S}|$ for $|S|=k$. By Lemma~\ref{lem:translation}(1) and Remark~\ref{rem:complex}, for every generic $e$ we have
\[
\mathrm{Vol}(\A)=\sum_{\substack{|S|=k\\ e\in\cone(-B_s,\ s\in S)}}|\det B_S| ,
\]
where the sum runs over all the sets $S$ of size $k$, that is, by (U$_C$), over all the vertices of the arrangement of the hyperplanes $\{p_i=0\}$ in the projective space. Thus the three counts of Remark~\ref{rem:complex} can be read on the Gale side. The complex zeros are counted by all the vertices of the arrangement, with weights $|\det B_S|$. The positive zeros of the Viro system are counted by the vertices of the chamber $\overline\Delta$, with weight $1$. The degree is given by the vertices of $\overline\Delta$, with the signs $\omega(S)$. In all three cases, the vertices are selected by the condition $e\in\cone(-B_s,\ s\in S)$. This gives Kushnirenko's theorem in terms of the Gale dual matrices only.
\end{remark}

\begin{remark}[The Gale system]\label{rem:gale-system}
The polyhedron $\Delta$ carries the Gale dual system of Bihan and Sottile \cite{BSGale}. Namely, the map $x\mapsto y$ defined by $p_i(y)=x^{a_i-a_N}$ is a bijection from the positive zeros of $F$ onto the zeros in $\Delta$ of $\psi=(\psi_1,\dots,\psi_k)$, $\psi_j=\sum_iB_{ij}\log p_i$. Indeed, $F(x)=0$ with $x>0$ means that the positive vector $(x^{a_i-a_N})_i$, whose last coordinate is $1$, lies in $\ker C$. Hence this vector is equal to $D(y,1)$ for a unique $y\in\Delta$. Moreover, a positive vector of the form $D(y,1)$ is of the form $(x^{a_i-a_N})_i$ if and only if its logarithm lies in the row space of $A$, that is, if and only if $\psi(y)=0$. Note that $\psi$ does not change if all the $p_i$ are multiplied by the same positive number, since $\sum_iB_i=0$.

The Viro deformation has a simple form on the Gale side. The coefficient matrix of $F^{h,t}$ is $C\,\mathrm{diag}(t^{h_1},\dots,t^{h_N})$, whose kernel is spanned by the columns of $\mathrm{diag}(t^{-h_1},\dots,t^{-h_N})D$. Thus the polyhedron $\Delta$ does not change, and the affine functions $p_i$ are replaced by $t^{-h_i}p_i$. Putting $\nu=-\log t$ and $e=-B^Th$, the Gale system of $F^{h,t}$ is
\[
\psi^{(t)}_j(y)=\sum_iB_{ij}\big(\log p_i(y)-h_i\log t\big)=\psi_j(y)-\nu\,e_j,\qquad\text{that is,}\qquad\psi(y)=\nu\,e .
\]
In other words, the positive zeros of $F^{h,t}$ correspond to the points of $\Delta$ where the Gale map $\psi\colon\Delta\to\R^k$ takes the value $\nu\,e$, and $t\to0$ means $\nu\to+\infty$. Since $\psi$ does not change under a common rescaling of the $p_i$, we may replace them by the linear forms $P_i$ on the compact polytope $Q=\{v\in K:\ \sum_iP_i(v)=1\}$ of Section~\ref{sec:gale-matrices}. This polytope is identified with the closure $\overline\Delta$ of $\Delta$ in the projective space, the point $y\in\Delta$ corresponding to $(y,1)/\sum_ip_i(y)$. Then $\psi$ is bounded on the compact subsets of the interior of $Q$, and it can only be large near the boundary, where some $P_i$ vanish. Near the vertex of $Q$ attached to $S\in V(\Delta)$, the forms $P_s$, $s\in S$, tend to $0$, while the others stay bounded away from $0$. Hence
\[
\psi=\sum_{s\in S}\log P_s\,B_s+O(1),
\]
where $B_s$ is the $s$-th row of $B$. Assume that the $k$ rows $B_s$, $s\in S$, form a basis, and write $e=-\sum_{s\in S}\lambda_sB_s$. Near $S$, the equation $\psi=\nu\,e$ then forces $\log P_s=-\nu\,\lambda_s+O(1)$. For $\nu$ large, this is possible only if all the $\lambda_s$ are positive, that is, if $e$ lies in the interior of $\cone(-B_s,\ s\in S)$. In this case $P_s$ is of order $t^{\lambda_s}$ at the solutions near $S$, and there is exactly one of them, as follows from Theorem~\ref{thm:viro}. By Lemma~\ref{lem:translation}, these vertices $S$ are the complements of the positively decorated simplices $\sigma$ of $T_h$. Moreover, the zero of the Gale map near $S$ corresponds to the zero of $F^{h,t}$ attached to $\sigma$ by Theorem~\ref{thm:viro}, whose Jacobian sign is $s(\sigma)=\epsilon\,\omega(S)$. To summarize, for $t$ small the zeros of the Gale system of $F^{h,t}$ sit near the vertices $S$ of $\overline\Delta$ such that $e\in\cone(-B_s,\ s\in S)$, one near each of them, and Theorem~\ref{thm:B} is the signed count of these zeros. In other words, Theorem~\ref{thm:B} is Theorem~\ref{thm:viro} seen on the Gale side.
\end{remark}

\begin{remark}[Gale form of the vertex reduction]\label{rem:gale-vertex}
Assume (U$_C$) and (H), and let $V$ be the set of indices of the vertices of $P$. By Proposition~\ref{lem:vertex}, $\deg F=\deg F_V$, and Theorem~\ref{thm:Bp} can be applied to $F_V$. The Gale dual matrices of $F_V$ are obtained from $B$ and $D$ by contracting the rows indexed by $V^c=[N]\setminus V$, which are linearly independent. More precisely, a Gale dual matrix of the exponent matrix of $F_V$ has rows the images of the $B_s$, $s\in V$, in $\R^k/\mathrm{span}(B_i,\ i\in V^c)$, and a Gale dual matrix of $C_V$ is given by the restrictions of the forms $P_s$, $s\in V$, to the subspace $\{P_i=0,\ i\in V^c\}$. Therefore the polyhedron of $F_V$ is the face of $\overline\Delta$ cut out by the hyperplanes $\{p_i=0\}$, $i\in V^c$. Only the vertices $S\in V(\Delta)$ of $\overline\Delta$ lying on this face, that is, such that $S\supset V^c$, contribute to the formula for $\deg F$, with the signs $\omega(S)$ up to a global sign. In particular, $\deg F=0$ if this face is empty.
\end{remark}

\subsection{Codimension one or two}\label{sec:codim}
For $k=1$, $\Delta$ is an interval, and $V(\Delta)$ consists of its two endpoints $\{i_0\}$ and $\{i_1\}$. One of them has index $N$ if $\Delta$ is unbounded, as one sees in the model $Q$. The two signs $\sign\det(D_{i_0};D_i)$ and $\sign\det(D_{i_1};D_i)$ are opposite. Hence Theorem~\ref{thm:B} with $e=1$ gives $\deg F=\pm1$ if $B_{i_0}B_{i_1}<0$, and $\deg F=0$ otherwise. This is the parity part of the Descartes rule of signs for circuits \cite{BD}.

We say that a set $S\subset[N]$ is a \emph{coface} if $S\neq\emptyset$, $S\neq[N]$, and the rows $B_s$, $s\in S$, have a positive linear dependence, that is, $\sum_{s\in S}\lambda_sB_s=0$ for some $\lambda\in\R^S_{>0}$.

\begin{lemma}\label{lem:gale-N}
Let $\A$ and $C$ be arbitrary.
\begin{enumerate}
\item A set $G\subset[N]$ is the set of indices of the points of $\A$ on a nonempty proper face of $P$ if and only if $[N]\setminus G$ is a coface.
\item (H) holds if and only if $K_S=\{0\}$ for every coface $S$, that is, when $\Delta\neq\emptyset$, the closure of $\Delta$ in the projective space does not meet the linear subspace $\{p_s=0,\ s\in S\}$.
\item Under (U$_A$), every coface has at least $k+1$ elements; under (U$_C$), $K_S=\{0\}$ whenever $|S|\ge k+1$.
\end{enumerate}
\end{lemma}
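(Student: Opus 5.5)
For part (1) I will use the Gale correspondence between affine functions and the row space of $A$. A nonempty proper face $G$ of $P$ is cut out by an affine function $\ell$ with $\ell(a_i)=0$ for $i\in G$ and $\ell(a_i)>0$ for $i\notin G$. The vector $r=(\ell(a_i))_i$ lies in the row space of $A$, which is the orthogonal complement of $\ker A$. Hence $B^Tr=0$, that is, $\sum_{s\notin G}r_sB_s=0$ with all $r_s>0$, so $[N]\setminus G$ is a coface. Here $G\neq[N]$ because the face is proper, and $G\neq\emptyset$ because the face is nonempty.

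Conversely, let $S$ be a coface with positive relation $\lambda$, and extend $\lambda$ by $0$ off $S$. Then $\lambda\in\R^N$ is orthogonal to the columns of $B$, which span $\ker A$. So $\lambda=A^Tc$ for some $c$, and $\lambda_i=\ell(a_i)$ for an affine function $\ell$. This $\ell$ is nonnegative on $\A$ and vanishes exactly on $G=[N]\setminus S\neq\emptyset$. It is not identically zero on $\A$ because $S\neq\emptyset$. Since $\A$ affinely spans $\R^n$, $\ell$ is nonconstant, so $\{\ell=0\}\cap P$ is a nonempty proper face whose points in $\A$ are exactly those indexed by $G$.

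For part (2), I reformulate (H) as: for each face $G$, $\ker C^G$ has no nonzero nonnegative vector. A vector of $\ker C^G$ extended by zero is a vector $Dv\in\ker C$ with $(Dv)_s=0$ for $s\in S=[N]\setminus G$. Nonnegativity of its $G$-coordinates means $P_i(v)\ge0$ for $i\in G$. Together with $P_s(v)=0$ for $s\in S$, this says exactly $v\in K_S$. The vector is nonzero iff $v\neq0$, because $D$ has full column rank. So by (1), (H) is equivalent to $K_S=\{0\}$ for every coface $S$. When $\Delta\neq\emptyset$, $\overline\Delta$ in projective space is the image of $K\setminus\{0\}$, and $K_S\neq\{0\}$ means exactly that $\overline\Delta$ meets $\{p_s=0,\ s\in S\}$. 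This gives the reformulation.

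For part (3), under (U$_A$) any $k$ rows of $B$ are linearly independent by \eqref{eq:complementary}. So a positive linear dependence among the rows indexed by $S$ needs $|S|\ge k+1$. Under (U$_C$), any $k+1$ of the forms $P_i$ are linearly independent on $\R^{k+1}$. Hence $k+1$ of them vanish simultaneously only at $0$, and $K_S=\{0\}$ whenever $|S|\ge k+1$.

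The only delicate step is the converse in (1): checking that $\ell$ is nonconstant and that the zero set cuts out a genuine nonempty proper face. I handle this with $G\neq\emptyset$, $S\neq\emptyset$ and the affine spanning of $\A$, as above.
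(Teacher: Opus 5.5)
Your proof is correct and follows the paper's argument. Part (1) identifies the relations $B^T\lambda=0$ with the row space of $A$, that is, with the value vectors $(\ell(a_i))_i$ of affine functions, and part (2) extends nonnegative vectors of $\ker C^G$ by zero to vectors $Dv$ with $v\in K_S$. The only difference is cosmetic, in the first half of (3): you use the linear independence of any $k$ rows of $B$ given by \eqref{eq:complementary}, while the paper uses that a proper face contains at most $n$ points of $\A$ together with part (1), and both are one-line arguments.
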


\begin{proof}
1. The vectors $\lambda\in\R^N$ such that $\sum_i\lambda_iB_i=0$ form the row space of $A$, that is, they are the vectors $(\ell(a_i))_i$ for the affine functions $\ell$ on $\R^n$. A nonempty proper face is the zero set on $P$ of such a function $\ell$ which is nonnegative on $\A$ and does not vanish identically on $\A$. Its complement in $\A$ is the support of $(\ell(a_i))_i$.

2. By Lemma~\ref{lem:N}(3), (H) means that $\ker C^G$ contains no nonzero nonnegative vector for every proper face $G$. Extended by $0$ on $S=[N]\setminus G$, such a vector is a vector $Dv$ with $v\in K_S\setminus\{0\}$. We conclude by~1.

3. Under (U$_A$), a proper face contains at most $n$ points of $\A$. Under (U$_C$), any $k+1$ rows of $D$ are linearly independent.
\end{proof}

Note that Lemma~\ref{lem:gale-N}(3) is the Gale dual of Lemma~\ref{lem:N}(2), and that it shows again that (U$_A$) and (U$_C$) imply (H). In the same way, (N) says that, for every coface $S$, the restriction of the Gale system to the stratum of $\overline\Delta$ where exactly the $p_s$, $s\in S$, vanish has no zero. This restriction is the Gale system of the truncation $F_{[N]\setminus S}$. We will not use this.

\begin{proposition}\label{prop:k2}
Let $k=2$, assume (U$_C$) and $\Delta\neq\emptyset$, and let $i_1,\dots,i_m$ be the indices of the edges of the polygon $Q$, in counterclockwise order. Then (H) holds if and only if the closed polygon $B_{i_1}B_{i_2}\cdots B_{i_m}$ does not pass through the origin, and in this case $\deg F$ is, up to sign, the winding number of this polygon around the origin.
\end{proposition}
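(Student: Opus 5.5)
The plan has two parts: first translate (H) into a condition on the polygon $Q$, then read off $\deg F$ from Theorem~\ref{thm:Bp} with a generic direction $e$ as a crossing count.

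For $k=2$, Proposition~\ref{prop:vertices} says that $Q$ is a convex polygon. Its edges lie on hyperplanes $\{P_{i_l}=0\}$, and its vertices are the sets $S_l=\{i_l,i_{l+1}\}\in V(\Delta)$, indices taken modulo $m$. The first step is to translate (H) using Lemma~\ref{lem:gale-N}(2): (H) holds if and only if no coface $S$ has $K_S\neq\{0\}$. By Lemma~\ref{lem:gale-N}(3), cofaces with $|S|\ge3$ are harmless under (U$_C$). For $|S|\le 2$ we have $K_S\neq\{0\}$ exactly when $S$ is a face of $\Delta$. These are the singletons $\{i_l\}$ (edges of $Q$) and the pairs $S_l$ (vertices). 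A singleton $\{i\}$ is a coface iff $B_i=0$. A pair $\{i,j\}$ is a coface iff $B_i,B_j$ are positively dependent, that is, iff $0$ lies in the segment $[B_i,B_j]$ but is not an endpoint. Taken together, (H) is equivalent to: no $B_{i_l}$ is $0$ and no segment $[B_{i_l},B_{i_{l+1}}]$ contains $0$. This is exactly the statement that the closed polygon $B_{i_1}\cdots B_{i_m}$ avoids the origin. Here I must check that if $\Delta$ has a single edge or the polygon is degenerate nothing breaks; for $k=2$ and $\Delta\ne\emptyset$, $Q$ is a genuine polygon, so $m\ge3$.

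Next, assuming (H), I apply Theorem~\ref{thm:Bp} with a vector $e$ avoiding all rays $\R_{\ge0}(-B_i)$. Equivalently, $-e$ avoids the rays $\R_{\ge0}B_i$. The sum runs over the vertices $S_l$ with $e\in\cone(-B_{i_l},-B_{i_{l+1}})$, that is, $-e\in\cone(B_{i_l},B_{i_{l+1}})$. As in the proof of Corollary~\ref{cor:n2}, this happens iff the segment $[B_{i_l},B_{i_{l+1}}]$ crosses the ray $\R_{>0}(-e)$; collinear pairs contribute nothing. So $\deg F$ is a signed count of the crossings of this ray by the edges of the $B$-polygon. To get the winding number, I need the sign $\omega(S_l)$ to equal $\sign\det(B_{i_l},B_{i_{l+1}})$ up to a sign independent of $l$.

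Write $\det B_{S_l}$ with rows in increasing index order. Then $\sign\det(B_{i_l},B_{i_{l+1}})=\sign\det B_{S_l}\cdot\eta_l$, where $\eta_l=+1$ if $i_l<i_{l+1}$ and $-1$ otherwise. It therefore suffices to show that $\sign\det(D_{S_l};D_j)\cdot\eta_l$ is independent of $l$, for $j\notin S_l$. Reordering the rows, this product equals $\sign\det(D_{i_l},D_{i_{l+1}},D_j)$ in the traversal order. Geometrically, this determinant is the value of the linear form $P_j$ on the cofactor vector $D_{i_l}\times D_{i_{l+1}}$. That vector spans the ray of $K$ at the vertex $S_l$, with a sign fixed by the orientation of the ordered pair of edge-normals. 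Since $P_j>0$ on $K$ and the edges are traversed counterclockwise, consecutive pairs of inward normals $(D_{i_l},D_{i_{l+1}})$ have a constant orientation relative to $K$. Hence the cross product always points into $K$, or always out of it. This is the main obstacle: making this orientation argument precise in $\R^3$. I would try first to verify it with the linear forms restricted to the plane $\lambda=1$, where $Q$ is a convex polygon and consecutive inward normals rotate monotonically. Once it holds, $\omega(S_l)=\pm\sign\det(B_{i_l},B_{i_{l+1}})$ with a global sign, and the signed crossing count equals the winding number around the origin.
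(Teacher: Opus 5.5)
Your proposal follows the paper's proof step by step. The (H) part goes through Lemma~\ref{lem:gale-N}(2),(3) exactly as you describe; the paper adds the one-line justification that, by (U$_C$), a form $P_i$ vanishing at a nonzero point of $K$ must be an edge form, since otherwise three forms would vanish on an extreme ray. The degree is then read as a signed crossing count from Theorem~\ref{thm:Bp}, and this reduces to showing that $\sign\det(D_{i_r};D_{i_{r+1}};D_j)$ does not depend on $r$.

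The orientation step you leave open is closed in the paper along the lines you suggest. Let $W$ be a $3\times 2$ matrix whose columns form a basis of $\ker\lambda$, let $v_0\in Q$, and let $\nu_a=D_aW$ be the inward normal of the edge $a$ of $Q$. Then $\det(D_a;D_b;\lambda)\det(W,v_0)=\det(\nu_a,\nu_b)$, and this has constant sign for consecutive edges of the convex polygon $Q$. Moreover, $\det(D_{i_r};D_{i_{r+1}};\lambda)=\sum_{j\notin S_r}\det(D_{i_r};D_{i_{r+1}};D_j)$ is a sum of terms of a single sign by Proposition~\ref{prop:vertices}(2). Hence $\sign\det(D_{i_r};D_{i_{r+1}};D_j)$ is the same for all $r$ and all $j\notin S_r$.
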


\begin{proof}
Put $i_{m+1}=i_1$. The vertices of $Q$ are the pairs $S_r=\{i_r,i_{r+1}\}$ of consecutive edges.

\emph{The condition (H).} By Lemma~\ref{lem:gale-N}(2), (H) means that $K_S=\{0\}$ for every coface $S$. By Lemma~\ref{lem:gale-N}(3), this only concerns the cofaces with $|S|\le2$, that is, the sets $\{i\}$ with $B_i=0$ and the sets $\{a,b\}$ with $\lambda B_a+\mu B_b=0$ for some $\lambda,\mu>0$. By (U$_C$), at most two of the forms $P_i$ vanish at a nonzero point of $K$, and exactly two of them vanish on an extreme ray. Hence $K_{\{i\}}\neq\{0\}$ if and only if $i$ is an edge of $Q$. Indeed, the nonzero face $K_{\{i\}}$ of $K$ contains an extreme ray, on which $P_i$ is one of the two vanishing forms. Likewise, for $|S|=2$, $K_S\neq\{0\}$ if and only if $S$ is one of the vertices $S_r$. Therefore (H) holds if and only if no row $B_{i_r}$ vanishes and no two consecutive rows $B_{i_r}$, $B_{i_{r+1}}$ have a positive linear dependence, that is, if and only if no segment $[B_{i_r},B_{i_{r+1}}]$ contains the origin.

\emph{The degree.} Assume (H). Let $W$ be a $3\times2$ matrix whose columns form a positive basis of $\ker\lambda$, and let $v_0\in Q$. For $a,b\in[N]$, the matrix $(D_a;D_b;\lambda)(W,v_0)$ has rows $(\nu_a,P_a(v_0))$, $(\nu_b,P_b(v_0))$ and $(0,0,1)$, where $\nu_a=D_aW$ is the gradient of $P_a$ restricted to the plane of $Q$. Hence $\det(D_a;D_b;\lambda)\det(W,v_0)=\det(\nu_a,\nu_b)$. The vectors $\nu_{i_r}$ are inward normals of the consecutive edges of the convex polygon $Q$, so $\det(\nu_{i_r},\nu_{i_{r+1}})>0$ for all $r$. On the other hand, $\det(D_{S_r};\lambda)=\sum_{i\notin S_r}\det(D_{S_r};D_i)$, and all these terms have the same sign. Thus $\sign\det(D_{S_r};D_i)$ is the same for all $r$, and $\omega(S_r)=\pm\sign\det(B_{i_r},B_{i_{r+1}})$ with a sign which does not depend on $r$. (In $\omega(S_r)$ the rows of $B_{S_r}$ and $D_{S_r}$ are taken in increasing order; taking both in the order $(i_r,i_{r+1})$ changes both signs, so it does not change $\omega(S_r)$. Likewise, the orientation of $\ker\lambda$ only affects a global sign.) Let $e$ be as in Theorem~\ref{thm:B}, that is, $-e$ does not lie on any ray $\R_{\ge0}B_i$. If $\det(B_{i_r},B_{i_{r+1}})=0$, then by (H) the two rows point in the same direction. Hence $e\notin\cone(-B_{i_r},-B_{i_{r+1}})$, and the segment $[B_{i_r},B_{i_{r+1}}]$ does not meet the ray $\R_{>0}(-e)$. Otherwise, $e\in\cone(-B_{i_r},-B_{i_{r+1}})$ if and only if this segment crosses the ray $\R_{>0}(-e)$. Therefore, by Theorem~\ref{thm:Bp}, $\deg F$ is, up to sign, the number of segments $[B_{i_r},B_{i_{r+1}}]$ which cross the ray $\R_{>0}(-e)$, counted with the sign of $\det(B_{i_r},B_{i_{r+1}})$. This is the winding number.
\end{proof}

Under (U$_A$), any two rows of $B$ are linearly independent. Hence (H) holds, and the polygon $B_{i_1}\cdots B_{i_m}$ never passes through the origin. Proposition~\ref{prop:k2} is the Gale dual counterpart of Corollary~\ref{cor:n2}. Note however the following difference. On the Newton side, (H) implies that the polygon of the columns $C_i$ indexed by the points of $\A$ on $\partial P$ does not pass through the origin, but it is a stronger condition, since it asks that the columns of each edge lie in an open half-plane. On the Gale side, (H) is exactly the condition that the winding number is defined.

For instance, if the edges of a triangle $Q$ correspond to the rows $(1,0)$, $(0,1)$, $(-1,-1)$, then the triangle with these vertices contains the origin, and $|\deg F|=1$. If the last row is $(1,1)$ instead, then $\deg F=0$.

\begin{example}[Example~\ref{ex:run}, continued: the Gale side]\label{ex:run-gale}
The vectors $(-5,7,-6,4,0)$ and $(-2,2,-1,0,1)$ form a basis of $\ker A$, and the vectors $(7,9,6,0,0)$, $(-11,-9,0,6,0)$, $(2,1,0,0,1)$ form a basis of $\ker C$. We may thus take
\[
B=\begin{pmatrix}-5&-2\\7&2\\-6&-1\\4&0\\0&1\end{pmatrix},\qquad
D=\begin{pmatrix}7&-11&2\\9&-9&1\\6&0&0\\0&6&0\\0&0&1\end{pmatrix}.
\]
Then $p_1=7y_1-11y_2+2$, $p_2=9y_1-9y_2+1$, $p_3=6y_1$, $p_4=6y_2$, $p_5=1$, and $\Delta$ is the unbounded polygon defined by $y_1,y_2>0$, $7y_1-11y_2+2>0$ and $9y_1-9y_2+1>0$. By Proposition~\ref{prop:vertices}, $V(\Delta)$ consists of the three vertices $\{3,4\}$, $\{2,3\}$, $\{1,2\}$, at $(0,0)$, $(0,\frac19)$, $(\frac7{36},\frac{11}{36})$, and of the two vertices at infinity $\{1,5\}$ and $\{4,5\}$, in the directions $(11,7)$ and $(1,0)$. The edges of $Q$ are indexed by $1,\dots,5$ in cyclic order. The signs are given in the following table:
\[
\begin{array}{c|ccccc}
S&\{1,2\}&\{2,3\}&\{3,4\}&\{4,5\}&\{1,5\}\\ \hline
\det B_S&4&5&4&4&-5\\
\sign\det(D_S;D_{i_S})&+1&+1&+1&+1&-1
\end{array}
\]
Thus $\omega(S)=+1$ for all $S\in V(\Delta)$. For $e=(0,1)$, the vertices such that $e\in\cone(-B_s,\ s\in S)$ are $\{1,2\}$ and $\{3,4\}$, and for $e=(1,0)$ they are $\{1,5\}$ and $\{2,3\}$. Hence $\deg(\Delta)=2$, and $\deg F=\deg(\Delta)$ for these choices of $B$ and $D$. For the heights $h=(-1,0,0,0,0)$ of Example~\ref{ex:run}, the vector $e=-B^Th=B_1=(-5,-2)$ selects $\{4,5\}$ and $\{2,3\}$. As predicted by Lemma~\ref{lem:translation}, these are the complements of the positively decorated triangles $\{1,2,3\}$ and $\{1,4,5\}$ of $T_h$. Finally, the closed polygon $B_1B_2B_3B_4B_5$ turns twice around the origin, as predicted by Proposition~\ref{prop:k2}.
\end{example}

\begin{example}[El Hilany's system]\label{ex:EH}
El Hilany \cite{EH} constructed a system of two equations in two variables with support
\[
a_1=(0,0),\quad a_2=(6,0),\quad a_3=(3,6),\quad a_4=(-14,7),\quad a_5=(-12,9),
\]
and seven positive solutions. Here $n=k=2$ and $N=5$. A Gale dual matrix of $A$ has rows
\[
B_1=\big(-\tfrac{11}4,-\tfrac94\big),\quad B_2=\big(\tfrac{35}{12},\tfrac{11}4\big),\quad B_3=\big(-\tfrac76,-\tfrac32\big),\quad B_4=(1,0),\quad B_5=(0,1).
\]
For the coefficients of \cite{EH}, the positive chamber is the pentagon of the arrangement of the five lines $\{p_i=0\}$, with edges in the cyclic order $2,1,3,4,5$. The arguments of $B_2,B_1,B_3,B_4,B_5$ are approximately $43^\circ$, $219^\circ$, $232^\circ$, $0^\circ$ and $90^\circ$. The successive turning angles of the closed polygon $B_2B_1B_3B_4B_5$ are approximately $176^\circ$, $13^\circ$, $128^\circ$, $90^\circ$ and $-47^\circ$, which add up to $360^\circ$. Hence the polygon turns once around the origin, and $\deg F=\pm1$ by Proposition~\ref{prop:k2}. In other words, the number of nondegenerate positive solutions is odd. On the other hand, the upper bound of \cite{PartI} for this system, computed with the best choice of the flag of relations, is $8$. By Corollary~\ref{cor:parity}, the system has at most $7$ nondegenerate positive solutions, and El Hilany's system shows that this refined bound is attained. Note also that $|\deg F|=1$ is far from the largest possible degree $2$ for $n=k=2$ (Theorem~\ref{thm:E}). Thus the parity statement is useful even when the degree is small.
\end{example}

\section{A Descartes rule of signs for the degree}\label{sec:descartes}

Let $(c_1,\dots,c_r)$ be a cyclic sequence of nonzero real numbers. Its number of \emph{cyclic sign changes} $V_{\rm cyc}(c_1,\dots,c_r)$ is the number of indices $j\in\{1,\dots,r\}$ such that $c_jc_{j+1}<0$, where $c_{r+1}=c_1$. In other words, it is the number of sign changes of the sequence $(c_1,\dots,c_r,c_1)$. Note that it is always even.

In higher dimension we use the following notion. We say that $m$ vectors of $\R^{m-1}$ form a \emph{sign change} if they positively span $\R^{m-1}$. For $m=2$, this means that two real numbers are nonzero and have opposite signs, so this is an ordinary sign change.

On the Newton side, let $g_j=\sum_lg_{lj}f_l$, $j=1,\dots,n-1$, be $n-1$ linear combinations of the components of $F$, and for $i\in[N]$ let $\gamma_i\in\R^{n-1}$ be the vector of the coefficients of $x^{a_i}$ in $g_1,\dots,g_{n-1}$. Let $T$ be a triangulation of $\partial P$ as in Theorem~\ref{thm:Ap}. We say that an $(n-1)$-simplex $\tau$ of $T$ is a \emph{sign change} of $(g_1,\dots,g_{n-1})$ if the $n$ vectors $\gamma_s$, $s\in\tau$, form a sign change. The number of sign changes of $(g_1,\dots,g_{n-1})$ on $T$ is the number of these simplices,
\[
V_T(g_1,\dots,g_{n-1})=\#\big\{\tau\in T:\ \dim\tau=n-1,\ \tau\ \text{is a sign change of}\ (g_1,\dots,g_{n-1})\big\},
\]
that is, the sum over all the $(n-1)$-simplices $\tau$ of $T$ of $1$ if $\tau$ is a sign change and $0$ otherwise. Equivalently, $\tau$ is a sign change if and only if the truncated system $g_1|_\tau=\dots=g_{n-1}|_\tau=0$ has a positive solution, where $g_j|_\tau$ is the sum of the terms of $g_j$ with exponents in $\tau$ (see the proof of Theorem~\ref{thm:descartes}). Under (U$_A$), we can take for $T$ the boundary complex of $P$, whose $(n-1)$-simplices are the facets of $P$. By the vertex reduction, we can also take for $T$ a triangulation of $\partial P$ whose vertices are the vertices of $P$. We say that $(g_1,\dots,g_{n-1})$ is \emph{admissible} for $T$ if there is no vector $\lambda\ge0$, supported on at most $n-1$ vertices of $T$, such that
\[
\sum_i\lambda_i\gamma_i=0\qquad\text{and}\qquad\sum_i\lambda_iC_i\neq0 .
\]
This holds in particular if any $n-1$ of the vectors $\gamma_s$, $s$ a vertex of $T$, are linearly independent, which is the case for generic $g_j$ under (U$_C$). In general, admissibility holds for generic $g_j$ (see the proof of Theorem~\ref{thm:descartes}). For $n=2$, a linear combination $g$ is admissible if and only if its coefficient $g_v$ is nonzero at every vertex $v$ of $T$, since $C_v\neq0$ by (H).

On the Gale side, let $b_1,\dots,b_{k-1}\in\ker A$ be affine relations of $\A$, and for $i\in[N]$ let $\beta_i=(b_1(i),\dots,b_{k-1}(i))\in\R^{k-1}$. We say that a vertex $S\in V(\Delta)$ is a \emph{sign change} of $(b_1,\dots,b_{k-1})$ if the $k$ vectors $\beta_s$, $s\in S$, form a sign change, and we denote by
\[
V_\Delta(b_1,\dots,b_{k-1})=\#\big\{S\in V(\Delta):\ S\ \text{is a sign change of}\ (b_1,\dots,b_{k-1})\big\}
\]
the number of these vertices. We say that $(b_1,\dots,b_{k-1})$ is \emph{admissible} if there is no vector $\lambda\ge0$, supported on at most $k-1$ indices, such that $\sum_i\lambda_i\beta_i=0$ and $\sum_i\lambda_ib(i)\neq0$ for some $b\in\ker A$. This holds in particular if any $k-1$ of the vectors $\beta_i$ are linearly independent, which is the case for generic $b_j$ under (U$_A$), and admissibility holds for generic $b_j$ in general. For $k=2$, an affine relation $b$ is admissible if and only if $b(i)\neq0$ for every $i$ such that the linear form $b'\mapsto b'(i)$ on $\ker A$ is nonzero.

\begin{theorem}\label{thm:descartes}
Assume (H). Statements~1 and~3 hold, and if moreover (U$_C$) holds, statements~2 and~4 hold.
\begin{enumerate}
\item \emph{Newton side.} For every triangulation $T$ of $\partial P$ as in Theorem~\ref{thm:Ap}, and for all linear combinations $g_1,\dots,g_{n-1}$ of the components of $F$ which are admissible for $T$,
\[
|\deg F|\le\tfrac12V_T(g_1,\dots,g_{n-1}).
\]
\item \emph{Gale side.} For all admissible affine relations $b_1,\dots,b_{k-1}$ of $\A$,
\[
|\deg F|\le\tfrac12V_\Delta(b_1,\dots,b_{k-1})\quad\text{if }\Delta\neq\emptyset,\qquad\deg F=0\quad\text{if }\Delta=\emptyset.
\]
\item \emph{Two variables.} If $n=2$ and $v_1,\dots,v_r$ are the vertices of the Newton polygon in cyclic order, then for every linear combination $g=\lambda_1f_1+\lambda_2f_2$, $\lambda_1,\lambda_2\in\R$, whose coefficients $g_i=\langle(\lambda_1,\lambda_2),C_i\rangle$ at the vertices of the Newton polygon are nonzero,
\[
|\deg F|\le\tfrac12V_{\rm cyc}(g_{v_1},\dots,g_{v_r}).
\]
\item \emph{Codimension two.} If $k=2$ and $i_1,\dots,i_m$ are the edges of $\overline\Delta$ in cyclic order, then for every admissible affine relation $b$ of $\A$,
\[
|\deg F|\le\tfrac12V_{\rm cyc}(b_{i_1},\dots,b_{i_m}).
\]
\end{enumerate}
\end{theorem}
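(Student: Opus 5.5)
The plan is to derive all four statements from the half signed counts of Propositions~\ref{prop:ends-newton} and~\ref{prop:ends-gale}, after a change of basis chosen so that the auxiliary rows are exactly the given $g_j$ (resp.\ $b_j$). The triangle inequality then bounds $|\deg F|$ by half the number of terms.

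\emph{Statement~1.} Let $G$ be the $(n-1)\times n$ matrix $(g_{lj})^T$, so that $GC$ has columns $\gamma_i$. First assume $G$ has rank $n-1$. Complete it to $M\in GL_n(\R)$ by a generic last row. Then $MF$ has the same positive zeros and satisfies (H), and $\deg(MF)=\sign\det M\cdot\deg F$. Its first $n-1$ rows form the matrix $C'$ with columns $\gamma_i$. I will check that the hypothesis of Proposition~\ref{prop:ends-newton} holds in the weakened form actually used in its proof and in Lemma~\ref{lem:PL}, namely only for columns indexed by vertices of a common $(n-1)$-simplex of $T$. Suppose $\pm e_n=\sum_i\lambda_iMC_i$ with $\lambda\ge0$ supported on at most $n-1$ vertices of $T$. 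Then $\sum\lambda_i\gamma_i=0$ and $\sum\lambda_iC_i\neq0$, which contradicts admissibility. Proposition~\ref{prop:ends-newton} then writes $\pm\deg F$ as half a sum of signs over the simplices $\tau$ for which the $\gamma_s$, $s\in\tau$, positively span $\R^{n-1}$. These are exactly the sign changes, so $|\deg F|\le\frac12V_T$. For the reformulation via truncated systems, I will argue as in Lemma~\ref{lem:cell}: the $n$ points of $\tau$ are affinely independent, so the monomial change of variables turns $g|_\tau=0$ into the linear condition that $\ker\gamma^\tau$ contains a positive vector.

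The rank-deficient case is where I expect the main difficulty. If $G$ has rank $<n-1$, no simplex is a sign change, since positive spanning requires spanning. So I must show $\deg F=0$. The first thing I will try is to choose a nonzero $w$ in the row space orthogonal complement and use admissibility to show that the vectors $C_i$ of each simplex map into a proper cone, making $\Pi_T/|\Pi_T|$ non-surjective. If that fails, I will perturb $G$ to full rank while keeping admissibility, which is an open and dense condition, and argue that no new sign changes appear.

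\emph{Statement~2.} This follows the same pattern on the Gale side. Replace $B$ by $BM$ with $M\in GL_k(\R)$ whose first $k-1$ columns are the coordinate vectors of $b_1,\dots,b_{k-1}$ in the basis $B$, and whose last column is generic. This changes neither $\Delta$ nor $\omega$, up to a global sign. Admissibility excludes $\pm e_k\in\cone(-B_s)$ for at most $k-1$ indices, exactly as above, because a nonzero $\sum\lambda_sB_s$ is detected by some $b\in\ker A$. Proposition~\ref{prop:ends-gale} then gives the bound. If $\Delta=\emptyset$, Theorem~\ref{thm:Bp} gives $\deg F=0$.

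\emph{Statements~3 and~4.} These are the specializations to $n=2$ and $k=2$. For $n=2$, I use the vertex reduction (Proposition~\ref{lem:vertex}) and take $T$ to be the edges of the Newton polygon. Two scalars form a sign change exactly when they have opposite signs, and admissibility reduces to $g_v\neq0$ because $C_v\neq0$ by (H). So $V_T=V_{\rm cyc}(g_{v_1},\dots,g_{v_r})$. For $k=2$, the vertices of $\overline\Delta$ are the pairs of consecutive edges $\{i_r,i_{r+1}\}$, as in the proof of Proposition~\ref{prop:k2}. Hence $V_\Delta(b)=V_{\rm cyc}(b_{i_1},\dots,b_{i_m})$, and statement~4 follows from statement~2.
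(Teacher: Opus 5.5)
Your proposal is correct and is essentially the paper's proof. You complete the $g_j$ (resp.\ the $b_j$) to a change of basis, use admissibility to keep $\pm e_n$ (resp.\ $\pm e_k$) out of the cones spanned by $n-1$ of the columns (resp.\ $k-1$ of the rows $-B_i$), bound the half signed counts of Propositions~\ref{prop:ends-newton} and~\ref{prop:ends-gale} by their numbers of terms, and read statements~3 and~4 as the cases $n=2$ and $k=2$.

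The rank-deficient case you flag is one the paper passes over silently, since it takes $M\in GL_n(\R)$ with the $g_j$ as first rows, and your first idea does settle it. If the space $W$ of vectors orthogonal to the coefficient vectors $(g_{1j},\dots,g_{nj})$ has dimension at least $2$, it cannot meet the interior of a simplicial cone $\cone(C_s,\ s\in\tau)$ without meeting its boundary, so admissibility and (H) force $\Pi_T$ to avoid $W\setminus\{0\}$, and $\deg F=0=V_T$; on the Gale side the same argument, with $e$ a nonzero form vanishing on all the $b_j$ in Theorem~\ref{thm:Bp}, gives an empty sum.
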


Statements~3 and~4 are the special cases $n=2$ and $k=2$ of statements~1 and~2. Indeed, for $n=2$ we take for $T$ the triangulation of $\partial P$ by the edges $\{v_l,v_{l+1}\}$ of the Newton polygon. An edge is a sign change of $g$ if and only if $g_{v_l}g_{v_{l+1}}<0$, so that $V_T(g)=V_{\rm cyc}(g_{v_1},\dots,g_{v_r})$.  Similarly, for $k=2$ the vertices of $\overline\Delta$ are the pairs of consecutive edges $\{i_l,i_{l+1}\}$, and $V_\Delta(b)=V_{\rm cyc}(b_{i_1},\dots,b_{i_m})$. For $n=1$, Theorem~\ref{thm:A} gives the parity part of the classical Descartes rule: $|\deg F|=1$ if the extreme coefficients have opposite signs, and $\deg F=0$ otherwise (Section~\ref{sec:onetwo}). In this sense statements~1 and~2 are Descartes rules of signs for the degree in any dimension. The number of sign changes of a sequence of coefficients is replaced by the number of simplices of a triangulation of the boundary of the Newton polytope (the facets, in the generic case), or of vertices of $\overline\Delta$, at which the coefficient vectors form a sign change, and the bound only depends on the signs of the maximal minors of $A$ and of the coefficient matrix of $g_1,\dots,g_{n-1}$, or of $B$ and $D$.

\begin{proof}
Statements~1 and~2 are the bounds given by Propositions~\ref{prop:ends-newton} and~\ref{prop:ends-gale}, which express $\deg F$ as half a signed count. Statements~3 and~4 follow, as explained above.

1. Let $M\in GL_n(\R)$ be a matrix whose first $n-1$ rows are the vectors $(g_{1j},\dots,g_{nj})$, $j=1,\dots,n-1$. Then the first $n-1$ components of $MF$ are $g_1,\dots,g_{n-1}$, $\deg(MF)=\pm\deg F$, and $MF$ satisfies (H). The matrix $C'$ formed by the first $n-1$ rows of $MC$ has columns $\gamma_i$. A combination $\sum_i\lambda_iMC_i$ is a multiple of $e_n$ if and only if $\sum_i\lambda_iC_i$ lies in the line $L$ of the vectors $w\in\R^n$ such that $\sum_lg_{lj}w_l=0$ for $j=1,\dots,n-1$. Admissibility says exactly that $L$ meets the cones spanned by $n-1$ columns $C_s$, $s$ a vertex of $T$, only at the origin. Hence neither $e_n$ nor $-e_n$ lies in such a cone spanned by columns of $MC$, which is what is used in the proof of Proposition~\ref{prop:ends-newton}, since only the simplices of $T$ are involved in Lemma~\ref{lem:PL}. Note that for generic $g_j$, $L$ is a generic line, which meets these cones only at the origin, since each of them lies in a proper subspace of $\R^n$; thus generic $g_j$ are admissible. Therefore Proposition~\ref{prop:ends-newton} applied to $MF$ expresses $2\deg F$ as $\pm$ a sum of $V_T(g_1,\dots,g_{n-1})$ signs, which gives the bound. Finally, let $\tau$ be an $(n-1)$-simplex of $T$. The truncated system $g_1|_\tau=\dots=g_{n-1}|_\tau=0$ has $n$ monomials with affinely independent exponents and coefficient vectors $\gamma_s$, $s\in\tau$. Its vector of monomials must be, up to a positive factor, the positive vector spanning the kernel of its coefficient matrix. Hence it has a positive solution if and only if the vectors $\gamma_s$, $s\in\tau$, positively span $\R^{n-1}$, that is, if and only if $\tau$ is a sign change.

2. If $\Delta=\emptyset$, then $\deg F=0$ by Theorem~\ref{thm:Bp}. Otherwise, we complete $b_1,\dots,b_{k-1}$ into a basis $b_1,\dots,b_k$ of $\ker A$, and take for $B$ the matrix with these columns. Its first $k-1$ columns form the matrix $B'$ with rows $\beta_i$. The rows of $B$ are the coordinates, in the dual basis, of the linear forms $b\mapsto b(i)$ on $\ker A$, and a combination of them is a multiple of $e_k$ if and only if the corresponding linear form vanishes on the hyperplane $W=\mathrm{span}(b_1,\dots,b_{k-1})$ of $\ker A$. Admissibility says exactly that the line of the linear forms vanishing on $W$ meets the cones spanned by $k-1$ of the forms $-B_i$ only at the origin. Hence neither $e_k$ nor $-e_k$ lies in a cone spanned by $k-1$ of the vectors $-B_i$. For generic $b_j$, $W$ is a generic hyperplane and this line is a generic line, so generic $b_j$ are admissible. Then Proposition~\ref{prop:ends-gale} expresses $2\deg F$ as $\pm$ a sum of $V_\Delta(b_1,\dots,b_{k-1})$ signs, which gives the bound.
\end{proof}

In particular, if some linear combination of two equations in two variables has coefficients of the same sign at all the vertices of the Newton polygon, then $\deg F=0$. Observe that, in statement~3, only the vertices of the Newton polygon matter, that the sign changes are counted cyclically, and that one may first take any linear combination of the equations.

\begin{example}[Example~\ref{ex:run}, continued: the Descartes rule]
For the running example, the coefficients of $f_1$ at the vertices $a_1,\dots,a_5$ are $3,-3,1,1,-3$. This sequence has $4$ cyclic sign changes, so statement~3 gives $|\deg F|\le2$, which is an equality. On the Gale side, the sum of the two columns of $B$ in Example~\ref{ex:run-gale} is the affine relation $b=(-7,9,-7,4,1)$ of $\A$, and the edges of $\Delta$ are indexed by $1,\dots,5$ in cyclic order. The sequence $b$ has $4$ cyclic sign changes, and statement~4 also gives $|\deg F|\le2$.
\end{example}

\section{An upper bound for the degree}\label{sec:bound}

For $d\ge1$ and $N\ge d+1$, let $M(d,N)$ be as in Theorem~\ref{thm:C}. For example, we have $M(1,N)=2$, $M(2,N)=N$, $M(3,N)=2N-4$ and $M(4,N)=N(N-3)/2$. Recall that, by the Upper Bound Theorem \cite{McM}, $M(d,N)$ is the largest number of facets of a $d$-polytope with $N$ vertices and, dually, the largest number of vertices of a $d$-polytope with $N$ facets. To prove Theorem~\ref{thm:C}, we bound the number of terms of the half signed counts of Propositions~\ref{prop:ends-newton} and~\ref{prop:ends-gale}, for a well chosen change of basis.

\begin{proof}[Proof of Theorem~\ref{thm:C}]
If $n=1$, then $|\deg F|\le1$ by Section~\ref{sec:onetwo}, and if $k=1$, then $|\deg F|\le1$ by Section~\ref{sec:codim}. Assume $n\ge2$. Let $M\in GL_n(\R)$ be generic, so that Proposition~\ref{prop:ends-newton} applies to $MF$, with the boundary complex of $P$ as triangulation. By the proof of this proposition, a facet $\Gamma$ contributes a term if and only if $e_n$ or $-e_n$ lies in $\cone(MC_s,\ s\in\Gamma)$, that is, if and only if the line $L=M^{-1}(\R e_n)$ meets $\cone(C_s,\ s\in\Gamma)$ outside the origin. Hence $2|\deg F|$ is at most the number of facets $\Gamma$ of $P$ such that $L$ meets the simplicial cone $\cone(C_s,\ s\in\Gamma)$. Since $n\ge2$, the union of a simplicial cone of dimension $n$ and of its opposite is not $\R^n$. Therefore we may choose $M$ such that $L$ does not meet the cone of a given facet. Then $2|\deg F|$ is less than the number of facets of $P$, which is at most $M(n,N)$, since $P$ is an $n$-polytope with at most $N$ vertices. Thus $|\deg F|<\frac12M(n,N)$.

In the same way, assume $k\ge2$ and $\Delta\neq\emptyset$ (if $\Delta=\emptyset$, then $\deg F=0$). By Proposition~\ref{prop:ends-gale}, applied after a generic change of basis of $\ker A$, $2|\deg F|$ is at most the number of $S\in V(\Delta)$ such that a generic line meets the simplicial cone $\cone(-B_s,\ s\in S)$ outside the origin. Moreover, the line may be chosen so that it does not meet the cone of a given $S$. Hence $2|\deg F|<\#V(\Delta)\le M(k,N)$, because $V(\Delta)$ is the vertex set of the simple $k$-polytope $Q$, which has at most $N$ facets.

Finally, if $k=2$ or $n=2$, then $\min(M(n,N),M(k,N))=N$, and the bound reads $|\deg F|\le\lceil N/2\rceil-1=\lfloor(N-1)/2\rfloor$.
\end{proof}

\begin{proof}[Proof of Theorem~\ref{thm:E}]
By Theorem~\ref{thm:C}, $\mathcal D(n,1)$ and $\mathcal D(1,n)$ are contained in $\{0,1\}$, and $\mathcal D(n,2)$ and $\mathcal D(2,n)$ are contained in $\{0,\dots,w\}$, where $w=\lfloor(N-1)/2\rfloor=\lfloor n/2\rfloor+1$ with $N=n+3$. The value $0$ is attained by any map with $\Delta=\emptyset$, for instance with all the columns of $C$ in an open half-space. For $n=1$, the value $1$ is attained by any polynomial whose extreme coefficients have opposite signs. For $k=1$, it is attained since $\mathcal D(n,1)=\mathcal D(1,n)$ by Theorem~\ref{thm:D}, or directly by a circuit with $B_{i_0}B_{i_1}<0$ (Section~\ref{sec:codim}).

\emph{The case $n=2$.} Let $\A$ be the set of vertices of a convex polygon with $N$ vertices, numbered counterclockwise. It satisfies (U$_A$). Let $1\le j\le w$, and let $C_r$ be a small generic perturbation of the vector $(\cos\frac{2\pi jr}N,\sin\frac{2\pi jr}N)$, $r=1,\dots,N$, so that (U$_C$) holds. Since $2\pi j/N<\pi$, the angle from $C_r$ to $C_{r+1}$ (indices modulo $N$) is close to $2\pi j/N$, hence lies in $(0,\pi)$, and the sum of these angles is $2\pi j$. Thus the closed polygon $C_1C_2\cdots C_N$ winds $j$ times around the origin, and $\deg F=j$ by Corollary~\ref{cor:n2}.

\emph{The case $k=2$.} Let $\theta_1<\dots<\theta_N$ be angles in $[0,2\pi)$, and let $D^0$ be the $N\times3$ matrix with rows $(-\cos\theta_i,-\sin\theta_i,1)$. The inequalities $\langle D^0_i,(y,1)\rangle>0$ define the polygon with $N$ sides circumscribed about the unit circle. No three of the tangent lines are concurrent, so any three rows of $D^0$ are linearly independent. Let $D=D^0G$ for a matrix $G\in GL_3(\R)$ such that $D_N=(0,0,1)$. It has the same cone $K$ up to a linear isomorphism, and $Q$ is a polygon with $N$ sides, in the cyclic order $1,\dots,N$. Let $C$ be a matrix whose rows form a basis of the orthogonal complement of the columns of $D$. Then $D$ is a Gale dual matrix of $C$, and $C$ satisfies (U$_C$). Next, for $1\le j\le w$, let $B$ be the $N\times2$ matrix whose rows are small generic perturbations of $(\cos\frac{2\pi jr}N,\sin\frac{2\pi jr}N)$, $r=1,\dots,N$, corrected so that its rows sum to zero. This is possible since the unperturbed rows sum to zero. Let $A$ be a matrix whose rows form a basis of the orthogonal complement of the columns of $B$, the first row being $(1,\dots,1)$. Its columns are of the form $(1,a_i)$ with $a_i\in\R^n$, and $B$ is a Gale dual matrix of $A$. Moreover, (U$_A$) holds since any two rows of $B$ are linearly independent. As in the case $n=2$, the closed polygon $B_1\cdots B_N$ winds $j$ times around the origin, and $|\deg F|=j$ by Proposition~\ref{prop:k2}. The exponents are real. However, since the degree only depends on the oriented matroids (Theorem~\ref{cor:om}), they may be perturbed to rational numbers and then multiplied by a common integer. Hence integer exponents give the same set of degrees. Note that the case $k=2$ also follows from the case $n=2$ and Theorem~\ref{thm:D}.
\end{proof}

For $n,k\ge3$ we do not know whether the bound of Theorem~\ref{thm:C} is sharp. It gives $|\deg F|\le4$ for $n=k=3$ and $|\deg F|\le13$ for $n=k=4$.
\section{H-duality and symmetry}\label{sec:h}

Assume (U$_A$), (U$_C$) and $\Delta\neq\emptyset$. Let $v_0$ be an interior point of the cone $K$, so that $\langle D_i,v_0\rangle>0$ for all $i$. We put $\Lambda=\mathrm{diag}(\langle D_i,v_0\rangle)$ and $\tilde D_i=D_i/\langle D_i,v_0\rangle$. The points $\tilde D_i$ lie on the affine hyperplane $\{\langle\cdot,v_0\rangle=1\}$ of $\R^{k+1}$. Let $\delta_i\in\R^k$ be their coordinates in an affine coordinate system of this hyperplane. Following \cite[Section~5]{BDF24}, we call
\begin{equation}\label{eq:h-dual}
H=(H_1,\dots,H_k)\colon\R^k_{>0}\to\R^k,\qquad H_j(z)=\sum_{i=1}^NB_{ij}\,z^{\delta_i},
\end{equation}
an \emph{H-dual} of $F$. This is a fewnomial map with $k$ components in $k$ variables, with $N=k+n+1$ monomials, exponent set $\{\delta_1,\dots,\delta_N\}$ and coefficient matrix $B^T$. In \cite{BDF24}, the point $v_0$ is attached to a positive zero $x_0$ of $F$ by $Dv_0=(x_0^{a_i})_i$. In other words, up to a positive factor, $v_0=(y_0,1)$, where $y_0\in\Delta$ is the zero of the Gale system $\psi$ corresponding to $x_0$ (Remark~\ref{rem:gale-system}). Moreover, in \cite{BDF24} the zero is $x_0=(1,\dots,1)$. This can always be achieved by the change of variables $x\mapsto x_0\star x$, $(x_0\star x)_j=x_{0,j}x_j$, and it means that the columns of $C$ sum to zero; then $Dv_0=(1,\dots,1)$. Since $\sum_iB_i=0$, every H-dual vanishes at $(1,\dots,1)$, and with this choice of $v_0$ this zero corresponds to $x_0$. It is used in \cite{BDF24} to compare the local multiplicities of $F$ at $x_0$ and of $H$ at $(1,\dots,1)$.

\begin{lemma}\label{lem:h-dual}
\begin{enumerate}
\item $\Lambda C^T$ is a Gale dual matrix of the exponent matrix of $H$, and $A^TG$ is a Gale dual matrix of its coefficient matrix $B^T$, for some $G\in GL_{n+1}(\R)$.
\item $H$ satisfies (U$_A$) and (U$_C$), and its positive chamber $\Delta_H$ is nonempty.
\item The vertices of $\overline{\Delta_H}$ are the sets $\Gamma\subset[N]$ which are facets of $\conv(\A)$.
\item The facets of the Newton polytope $\conv(\delta_1,\dots,\delta_N)$ of $H$ are the sets $S\in V(\Delta)$.
\end{enumerate}
\end{lemma}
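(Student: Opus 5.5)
Everything reduces to linear algebra on the matrices $A$, $B$, $C$, $D$, $\Lambda$. We prove (1) by checking kernels and ranks, and then read (2)--(4) off (1) by exchanging the roles of $(A,B)$ and $(C,D)$, using \eqref{eq:complementary}, Proposition~\ref{prop:vertices} and Lemma~\ref{lem:gale-N}.

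\emph{Step 1 (part 1).} The exponent matrix of $H$ is the $(k+1)\times N$ matrix $A_H$ with columns $(1,\delta_i)$. Since $\delta_i$ are affine coordinates of $\tilde D_i$ on the hyperplane $\{\langle\cdot,v_0\rangle=1\}$, there is $G'\in GL_{k+1}(\R)$ with $A_H=G'\tilde D^T=G'D^T\Lambda^{-1}$. Then
\[
A_H\,\Lambda C^T=G'D^TC^T=G'(CD)^T=0 .
\]
The $n$ columns of $\Lambda C^T$ are independent because $C$ has rank $n$ by (U$_C$), and $\dim\ker A_H=N-(k+1)=n$. So $\Lambda C^T$ is a Gale dual matrix of $A_H$. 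On the coefficient side we have $B^TA^T=(AB)^T=0$, and the $n+1$ columns of $A^T$ are independent, which fills $\ker B^T$. Any other basis of this kernel is $A^TG$ for some $G\in GL_{n+1}(\R)$.

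\emph{Step 2 (part 2).} We apply \eqref{eq:complementary} to the pairs just found.
\begin{itemize}
\item (U$_A$) for $H$: a maximal minor of $A_H$ is, up to nonzero factors, a maximal minor of $D$. By (U$_C$) for $F$, any $k+1$ rows of $D$ are independent, so these minors are nonzero.
\item (U$_C$) for $H$: a maximal minor of $B^T$ equals $\det B_S$, which is nonzero under (U$_A$) for $F$.
\item Nonempty chamber: $\Delta_H$ is nonempty if and only if $\ker B^T$ contains a positive vector. The vector $A^T(1,0,\dots,0)=(1,\dots,1)$ is such a vector.
\end{itemize}

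\emph{Step 3 (part 3).} By (1), the forms $P^H_i$ defining the cone $K_H$ are the rows of $A^TG$, that is, $P^H_i(c)=\langle(1,a_i),Gc\rangle$, the values at $a_i$ of an affine function. So $K_H$ is identified with the cone of affine functions $\ell$ with $\ell(a_i)\ge0$ for all $i$. By Proposition~\ref{prop:vertices}, a vertex is a set $\Gamma$ with $|\Gamma|=n$, the size required since the $H$-analogue of $k$ is $n$. Its ray is spanned by a nonzero $\ell$ that vanishes exactly on $\Gamma$ and is positive elsewhere. Such an $\ell$ exists exactly when $\Gamma$ is a facet of $\conv(\A)$; under (U$_A$), every facet has exactly $n$ points.

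\emph{Step 4 (part 4).} By Lemma~\ref{lem:gale-N}(1) applied to $H$, a set $\sigma$ indexes a face of $\conv(\delta)$ if and only if its complement $S$ is a coface of $H$. This means the rows of $\Lambda C^T$ indexed by $S$ have a positive linear dependence, that is, there is $m\in\ker C$ with $m_S>0$ and $m_{[N]\setminus S}=0$. Rows scaled by the positive $\Lambda$ give the same dependences. Writing $m=Dv$, this says that $v\ne0$ lies on the face $K_S$ with $P_i>0$ off $S$, so $S$ is a face of $\Delta$. Facets of $\conv(\delta)$ have $k$ points by (U$_A$) for $H$, so they correspond to sets $S$ of size $k$, which are exactly the elements of $V(\Delta)$.

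\emph{Main obstacle.} The delicate point is the bookkeeping of complements and sizes. I expect, as written above, that the facets of $\conv(\delta)$ correspond to the sets $S\in V(\Delta)$ directly. However, since $|S|=k$ equals the facet size of a $k$-polytope, whether it is $S$ itself or its complement must be checked carefully. If the coface description yields complements, I would instead argue directly via Step 3's method: a facet is $\{i:\ \ell(\delta_i)=0\}$ for an affine $\ell\ge0$, and $\ell(\delta_i)=\langle \tilde D_i,w\rangle$. This gives $P_i(w)=0$ exactly on the facet and $P_i(w)>0$ elsewhere, so the facet is a vertex $S$ of $\overline\Delta$.
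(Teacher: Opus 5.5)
Your proposal is correct and follows the paper's proof in substance. Steps~1 and~2 are the paper's argument. You check $A_H\Lambda C^T=G'(CD)^T=0$ and count dimensions, where the paper identifies the row space of $A_H$ with $\Lambda^{-1}\ker C$. The dimension count needs $\operatorname{rank}A_H=k+1$, which holds since $A_H=G'D^T\Lambda^{-1}$ and $D$ has rank $k+1$. Step~3 identifies $K_H$ with the cone of affine functions nonnegative on $\A$. This is equivalent to the paper's use of Proposition~\ref{prop:vertices}(2) with the signs $\sign\det G\cdot\sign\det A^{\Gamma\cup\{i\}}$.

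The one thing to repair is the complement bookkeeping in your first route for Step~4, which you rightly flagged. If $m=Dv$ has $m_S>0$ and $m_{[N]\setminus S}=0$, then $P_i(v)=0$ for $i\in\sigma=[N]\setminus S$ and $P_s(v)>0$ for $s\in S$. So $v$ lies in $K_\sigma$, not in $K_S$, and it is $\sigma$ itself, not its complement, that is a face of $\Delta$. The complement coming from Lemma~\ref{lem:gale-N}(1) and this one cancel. Thus a facet $\sigma$ of $\conv(\delta_1,\dots,\delta_N)$ has $k$ points and is a face of $\Delta$ of size $k$, that is, $\sigma\in V(\Delta)$. As written, your sentence about ``sets $S$ of size $k$'' conflates $\sigma$ with its complement, which has $n+1$ elements.

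Once corrected, this route gives more than the lemma. The index sets of the nonempty proper faces of $\conv(\delta_1,\dots,\delta_N)$ are exactly the nonempty faces of $\Delta$, so this polytope is combinatorially polar to $Q$.

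Your fallback argument is also correct once you add the converse. For $S\in V(\Delta)$ with extreme ray spanned by $w$, the affine function $\delta\mapsto\langle G'^{-1}(1,\delta),w\rangle$ is nonnegative on the $\delta_i$ and vanishes exactly on $S$. Since $k$ affinely independent points span a facet, $S$ is a facet. The fallback is the paper's proof in supporting-hyperplane form. The paper uses
$\det\big((1,\delta_s)_{s\in S},(1,\delta_i)\big)=\det G'\cdot\det(D_S;D_i)/\big(\prod_{s\in S}\langle D_s,v_0\rangle\cdot\langle D_i,v_0\rangle\big)$.
As a function of $i$, this is your $\ell(\delta_i)=\langle\tilde D_i,w\rangle$ up to a nonzero factor independent of $i$, with $w$ the cofactor vector of $D_S$.
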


\begin{proof}
1. The row space of the matrix with columns $(1,\delta_i)$ consists of the vectors $(\langle\tilde D_i,v\rangle)_i$, $v\in\R^{k+1}$. In other words, it is $\Lambda^{-1}\ker C$. Its orthogonal complement $\Lambda(\ker C)^\perp$ is spanned by the columns of $\Lambda C^T$. On the other hand, the kernel of $B^T$ is $(\ker A)^\perp$, that is, the row space of $A$, which is spanned by the columns of $A^T$.

2. By~1, (U$_A$) for $H$ means that any $n$ rows of $\Lambda C^T$ are linearly independent, which is (U$_C$) for $F$. Likewise, (U$_C$) for $H$ is (U$_A$) for $F$. As in Section~\ref{sec:gale}, $\Delta_H\neq\emptyset$ if and only if the kernel of the coefficient matrix $B^T$ of $H$ contains a positive vector. This kernel is the row space of $A$, which contains $(1,\dots,1)$.

3. By Proposition~\ref{prop:vertices}, a set $\Gamma$ with $|\Gamma|=n$ is a vertex of $\overline{\Delta_H}$ if and only if the signs of the determinants of the rows of $A^TG$ indexed by $\Gamma$, followed by the row $i$, do not depend on $i\notin\Gamma$. These signs are $\sign\det G\cdot\sign\det A^{\Gamma\cup\{i\}}$. They do not depend on $i$ exactly when all the other points of $\A$ lie on the same side of the affine hull of $\Gamma$, that is, when $\Gamma$ is a facet.

4. Recall that the $\delta_i$ are the coordinates of the points $\tilde D_i$ in an affine coordinate system of the hyperplane $\{\langle\cdot,v_0\rangle=1\}$. Homogenizing this coordinate system, we get an invertible $(k+1)\times(k+1)$ matrix $M'$ such that $(1,\delta_i)=M'\tilde D_i=M'D_i/\langle D_i,v_0\rangle$ for all $i$. Hence, for $|S|=k$ and $i\notin S$,
\[
\det\big((1,\delta_s)_{s\in S},(1,\delta_i)\big)=\det M'\cdot\frac{\det(D_S;D_i)}{\prod_{s\in S}\langle D_s,v_0\rangle\cdot\langle D_i,v_0\rangle},
\]
with positive denominators. A set $S$ of size $k$ is a facet of $\conv(\delta_1,\dots,\delta_N)$ if and only if the sign of the left-hand side does not depend on $i\notin S$. This means that the signs of $\det(D_S;D_i)$, $i\notin S$, are equal, that is, by Proposition~\ref{prop:vertices}, that $S\in V(\Delta)$.
\end{proof}

Items~3 and~4 express the involutivity of the H-duality. In the same way, one checks that an H-dual of $H$ is the map $F$, with its exponents replaced by their images under a projective transformation which is defined on $\conv(\A)$, and with its coefficient columns $C_i$ multiplied by the positive numbers $\langle D_i,v_0\rangle$. Neither change affects the signs of the maximal minors, so this H-dual has the same degree as $F$.

\begin{theorem}\label{thm:h-duality}
Assume (U$_A$), (U$_C$) and $\Delta\neq\emptyset$. Then $\deg(\Delta_H)$ is, up to sign, the sum of Theorem~\ref{thm:A} for $F$, and consequently
\[
\deg H=\pm\deg F .
\]
\end{theorem}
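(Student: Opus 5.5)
We apply Theorem~\ref{thm:B} to $H$ and match its terms one by one with those of Theorem~\ref{thm:A} for $F$, using the Gale dual matrices of Lemma~\ref{lem:h-dual}(1).

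\emph{The Gale data of $H$.} The exponent matrix of $H$ has Gale dual $B_H=\Lambda C^T$, an $N\times n$ matrix with rows $\langle D_i,v_0\rangle C_i^T$. The coefficient matrix $B^T$ has Gale dual $D_H=A^TG$, with rows $(1,a_i)^TG$. In Theorem~\ref{thm:B} the matrix $D_H$ is normalized so that its last row is $(0,\dots,0,1)$. Any normalization only changes $\omega$ by a global sign, and in fact the sign of $\det(D_{H,S};D_{H,i})$ does not depend on the basis up to such a global sign. So I would keep $D_H=A^TG$ and track $\sign\det G$ as a global factor.

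\emph{Matching the terms.} By Lemma~\ref{lem:h-dual}(3), $V(\Delta_H)$ is the set of facets $\Gamma$ of $P$. For such $\Gamma$:
\begin{itemize}
\item $\det (B_H)_\Gamma=\prod_{s\in\Gamma}\langle D_s,v_0\rangle\cdot\det C^\Gamma$, which has the sign of $\det C^\Gamma$;
\item $\det(D_{H,\Gamma};D_{H,i_\Gamma})=\det G\cdot\det A^{\Gamma\cup\{i_\Gamma\}}$, which has sign $\sign\det G\cdot\varepsilon_\Gamma$.
\end{itemize}
Hence $\omega_H(\Gamma)=\pm\sign\det C^\Gamma\cdot\varepsilon_\Gamma$, with a sign independent of $\Gamma$.

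\emph{Matching the selection conditions.} In Theorem~\ref{thm:B} for $H$, $\Gamma$ is selected when $e\in\cone(-(B_H)_s,\ s\in\Gamma)=\cone(-C_s,\ s\in\Gamma)$, since positive row rescalings do not change cones. Choosing $e=-u$, with $u$ as in Theorem~\ref{thm:A}, makes this exactly the condition $u\in\cone(C_s,\ s\in\Gamma)$. The genericity requirement on $e$ (not in a cone of $n-1$ of the vectors $-(B_H)_i$) then becomes the requirement on $u$ (not in a cone of $n-1$ columns $C_i$).

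\emph{Conclusion.} The sum $\deg(\Delta_H)$ therefore equals, up to a global sign, the sum of Theorem~\ref{thm:A} for $F$, which is $(-1)^n\deg F$. Lemma~\ref{lem:h-dual}(2) gives (U$_A$), (U$_C$) and $\Delta_H\neq\emptyset$ for $H$, so Theorem~\ref{thm:B} applies and $\deg H=\pm\deg(\Delta_H)=\pm\deg F$.

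\emph{Main obstacle.} The delicate point is justifying that non-normalized Gale dual matrices may be used in Theorem~\ref{thm:B} at the cost of a global sign only. This includes the fact that the convention $D_N=(0,\dots,0,1)$ is just a change of basis $D\mapsto DM$, which multiplies every $\det(D_S;D_i)$ by $\det M$. It also includes checking that the sign of $\det(D_S;D_{i_S})$ is independent of $i_S$ for $S\in V(\Delta_H)$, which holds by Proposition~\ref{prop:vertices}. Everything else is bookkeeping of the positive factors $\langle D_i,v_0\rangle$.
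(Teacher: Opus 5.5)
Your proposal is correct and follows the paper's proof essentially step by step. You apply Theorem~\ref{thm:B} to $H$ with the Gale dual matrices $\Lambda C^T$ and $A^TG$ of Lemma~\ref{lem:h-dual}(1), and identify $V(\Delta_H)$ with the facets of $P$ via Lemma~\ref{lem:h-dual}(3). You then read off $\omega_H(\Gamma)=\sign\det G\cdot\sign\det C^\Gamma\cdot\varepsilon_\Gamma$ and match the selection conditions by taking $e=-u$. Your remark that the normalization $D_N=(0,\dots,0,1)$ only costs a global sign is a correct and harmless addition; the paper absorbs it into the choice of $G$.
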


\begin{proof}
We apply the formula of Theorem~\ref{thm:B} to $H$, with the Gale dual matrices of Lemma~\ref{lem:h-dual}(1) and a generic vector $e_H\in\R^n$. By Lemma~\ref{lem:h-dual}(3), the sum runs over the facets $\Gamma$ of $\conv(\A)$. Since $\Lambda$ is positive, the rows of $\Lambda C^T$ indexed by $\Gamma$ have a determinant of sign $\sign\det C^\Gamma$. The second sign in $\omega(\Gamma)$ is $\sign\det G\cdot\varepsilon_\Gamma$. Finally, $e_H\in\cone(-\Lambda_{ss}C_s,\ s\in\Gamma)$ if and only if $-e_H\in\cone(C_s,\ s\in\Gamma)$. Therefore $\deg(\Delta_H)$ is $\sign\det G$ times the sum of Theorem~\ref{thm:A} for $F$, computed with $u=-e_H$. By Theorems~\ref{thm:A} and~\ref{thm:B}, we conclude that $\deg H=\pm\deg(\Delta_H)=\pm\deg F$.
\end{proof}

Thus the H-duality exchanges the two formulas: the formula of Theorem~\ref{thm:B} for $H$, over the vertices of $\overline{\Delta_H}$, is the formula of Theorem~\ref{thm:A} for $F$, over the facets of $\conv(\A)$. Note that only the signed counts are preserved. For instance, consider $x^3-2x+1=(x-1)(x^2+x-1)$, with the exponents ordered as $a_1=0$, $a_2=1$, $a_3=3$, so that $B$ has rows $2,-3,1$. It has two positive roots of opposite signs. With $v_0=(1,1)$ and $D$ with rows $(2,1),(1,1),(0,1)$, its H-dual is $2z^{2/3}-3z^{1/2}+1$, which, with $t=z^{1/6}$, is $(t-1)(2t^3-t^2-t-1)$. It also has two positive roots of opposite signs, and both degrees vanish. The same translation also exchanges the Viro formula and the degree at infinity. Indeed, multiply the coefficient $B_i$ of $z^{\delta_i}$ in~\eqref{eq:h-dual} by $t^{g_i}$, where $g\in\R^N$ is such that $u=C\Lambda g$ is generic as in Theorem~\ref{thm:A}. By Lemma~\ref{lem:translation} applied to $H$, with the Gale dual matrices of Lemma~\ref{lem:h-dual}(1), the simplices of the regular triangulation of $\{\delta_1,\dots,\delta_N\}$ induced by $g$ which are positively decorated by $B^T$ are the complements of the facets $\Gamma$ of $\conv(\A)$ such that $u\in\cone(C_s,\ s\in\Gamma)$. Hence the sum of Theorem~\ref{thm:A} for $F$ is the Viro formula for $H$, which gives another explanation of Section~\ref{sec:coning}.

\begin{example}[Example~\ref{ex:run}, continued: the H-dual]
We take the matrices $B$ and $D$ of Example~\ref{ex:run-gale}, and $v_0=(\frac16,\frac16,1)$, which lies in the interior of $K$ since $Dv_0=(\frac43,1,1,1,1)$. As affine coordinates on the hyperplane $\{\langle\cdot,v_0\rangle=1\}$, we take $\frac43$ times the first two coordinates. Then $\delta_1=(7,-11)$, $\delta_2=(12,-12)$, $\delta_3=(8,0)$, $\delta_4=(0,8)$, $\delta_5=(0,0)$, and
\[
H(z)=\big(-5z_1^7z_2^{-11}+7z_1^{12}z_2^{-12}-6z_1^8+4z_2^8,\ -2z_1^7z_2^{-11}+2z_1^{12}z_2^{-12}-z_1^8+1\big).
\]
Its Newton polygon is the pentagon with vertices $\delta_5,\delta_1,\delta_2,\delta_3,\delta_4$ in counterclockwise order, and its edges are the five vertices of $\overline\Delta$ (Lemma~\ref{lem:h-dual}(4)). Its coefficient columns are the rows $B_i$, and the closed polygon $B_5B_1B_2B_3B_4$ turns twice around the origin. Hence $\deg H=2=\deg F$ by Corollary~\ref{cor:n2}. Observe that the computation of $\deg H$ by Theorem~\ref{thm:A} is exactly the computation of $\deg F$ by Theorem~\ref{thm:B} in Example~\ref{ex:run-gale}.
\end{example}

\begin{proof}[Proof of Theorem~\ref{thm:D}]
A map with $\Delta=\emptyset$ has degree $0$, and such maps exist for all $(n,k)$, so $0$ belongs to both sets. Let $F$ be of type $(n,k)$, satisfying (U$_A$) and (U$_C$), with $\Delta\neq\emptyset$. Its H-dual $H$ has type $(k,n)$, it satisfies (U$_A$) and (U$_C$) by Lemma~\ref{lem:h-dual}(2), and $|\deg H|=|\deg F|$ by Theorem~\ref{thm:h-duality}. Thus $\mathcal D(n,k)\subset\mathcal D(k,n)$, and exchanging $n$ and $k$ gives the equality. The exponents of $H$ are real. However, since the degree only depends on signs of minors (Theorem~\ref{cor:om}), they may be perturbed to rational numbers and then, by the change of variables $z\mapsto z^q$, to integers. Hence the statement also holds for maps with integer exponents.
\end{proof}

\begin{remark}
In terms of oriented matroids, the H-duality is the map $(M_A,M_C)\mapsto(M_C^*,M_A^*)$, where $M_A$ and $M_C$ are the oriented matroids of the columns of $A$ and $C$, and $M^*$ denotes the dual, or orthogonal, oriented matroid of $M$ \cite[Section~3.4]{BLSWZ}. Recall that if $M$ is realized by the columns of a matrix, then $M^*$ is realized by the rows of a Gale dual matrix, and the vectors of $M^*$ are the covectors of $M$. Thus $M_A^*$ and $M_C^*$ are the oriented matroids of the rows of $B$ and of $D$. Recall also that an oriented matroid is \emph{acyclic} if it has no nonzero vector with nonnegative signs. For an oriented matroid realized by vectors $v_1,\dots,v_N$, this means that there is no linear dependence $\sum_i\lambda_iv_i=0$ with $\lambda\ge0$, $\lambda\neq0$, that is, by Gordan's theorem, that the $v_i$ lie in an open half-space. The oriented matroid $M_A$ is acyclic, since the first coordinate of the columns $(1,a_i)$ is positive. On the other hand, $M_C^*$ is acyclic if and only if $\ker C$ contains a positive vector, that is, if and only if $\Delta\neq\emptyset$. Therefore this map is an involution on the pairs with $\Delta\neq\emptyset$, which preserves $|\deg F|$. For $n\le2$ or $k\le2$, the largest degree is known independently on both sides. It is $1$ for $n=1$ or $k=1$, and $\lfloor(N-1)/2\rfloor$ for $n=2$ or $k=2$, by the winding number of the polygon of the columns of $C$ in one case and of the rows of $B$ in the other. We do not know whether the largest number of positive zeros, rather than the largest degree, is symmetric in $n$ and $k$.
\end{remark}

\bibliographystyle{amsplain}
\bibliography{article-degree}

\end{document}